\numberwithin{equation}{section}
\newcommand{\op}{\operatorname}
\newcommand{\C}{\mathbb{C}}
\newcommand{\R}{\mathbb{R}}
\newcommand{\Q}{\mathbb{Q}}
\newcommand{\Z}{\mathbb{Z}}
\renewcommand{\H}{\mathbf{H}}
\newcommand{\Etau}{{\text{E}_\tau}}
\newcommand{\E}{{\mathcal E}}
\newcommand{\im}{\op{im}}
\newcommand{\h}{\mathbf{h}}
\providecommand{\abs}[1]{\left\lvert#1\right\rvert}
\newcommand{\abracket}[1]{\left\langle#1\right\rangle}
\newcommand{\bbracket}[1]{\left[#1\right]}
\newcommand{\fbracket}[1]{\left\{#1\right\}}
\newcommand{\bracket}[1]{\left(#1\right)}
\newcommand{\ket}[1]{|#1\rangle}
\newcommand{\ora}[1]{\overrightarrow#1}
\newcommand{\mc}{\mathcal}
\newcommand{\cinfty}{C^{\infty}}
\newcommand{\pa}{\partial}
\renewcommand{\dbar}{\bar\pa}
\newcommand{\OO}{{\mathcal O}}
\newcommand{\BV}{Batalin-Vilkovisky }
\newcommand{\Ol}{\mathcal O_{loc}}
\newcommand{\iso}{\cong}
\newcommand{\B}{\mathcal{B}}
\newcommand{\V}{\mathcal{V}}
\renewcommand{\Im}{\op{Im}}
\DeclareMathOperator{\End}{End}
\DeclareMathOperator{\Sym}{Sym}
\DeclareMathOperator{\Hom}{Hom}
\DeclareMathOperator{\Tr}{Tr}
\DeclareMathOperator{\Id}{Id}
\DeclareMathOperator{\Dens}{Dens}
\DeclareMathOperator{\cw}{cw}
\newcommand{\A}{\mathcal A}
\renewcommand{\L}{\mathcal L}
\theoremstyle{plain}
\newtheorem{thm}{Theorem}[section]
\newtheorem{thm-defn}{Theorem/Definition}[section]
\newtheorem{lem}[thm]{Lemma}
\newtheorem{lem-defn}[thm]{Lemma/Definition}
\newtheorem{prop}[thm]{Proposition}
\newtheorem{cor}[thm]{Corollary}
\theoremstyle{definition}
\newtheorem{defn}[thm]{Definition}
\newtheorem{eg}[thm]{Example}
\theoremstyle{remark}
\newtheorem{rmk}[thm]{Remark}
\begin{document}

  \title{Vertex algebras and quantum master equation}
  \author{Si Li}
    
  \address{
S. Li: Department of Mathematical Sciences and Yau Mathematical Sciences Center, Tsinghua University, Beijing, China; 
}
\email{sili@mail.tsinghua.edu.cn}
  
\date{}
  \maketitle


\begin{abstract}
We contribute to study the two dimensional chiral conformal field theories that arise from chiral deformations of free theories such as the $\beta\gamma$-system, $bc$-system and chiral bosons. Our method is based on the  effective \BV quantization theory. We establish an exact correspondence between renormalized quantum master equations for chiral deformations and Maurer-Cartan equations for the Lie algebra of modes of chiral vertex algebras. The generating functions on the torus are proven to have modular property with mild holomorphic anomaly. As an application, we construct an exact solution of quantum B-model (BCOV theory) in complex one dimension that solves the full higher genus mirror symmetry conjecture on elliptic curves. 
\end{abstract}

  \tableofcontents

\section{Introduction}
Quantum field theory provides a rich source of mathematical thoughts. One important feature of quantum field theory that lies secretly behind many of its surprising mathematical predictions is the role of infinite dimensionality. A famous example is the mysterious mirror symmetry conjecture between symplectic and complex geometries, which can be viewed as a version of infinite dimensional Fourier transform. Typically, many quantum problems are formulated in terms of ``path integrals", which require measures that are mostly not yet known to mathematicians. Nevertheless, asymptotic analysis can always be performed with the help of the celebrated idea of renormalization. 

Despite the great success of renormalization theory in physics applications, its use in mathematics is relatively limited but extremely powerful when it does apply. One such example is Kontsevich's solution \cite{Kontsevich-DQ} to the deformation quantization problem on arbitrary Poisson manifolds. Kontsevich's explicit formula of star product is obtained via graph integrals on a compactification of configuration space on the disk, which can be viewed as a geometric renormalization of the perturbative expansion of Poisson sigma model (see  \cite{CF}). Another recent example is Costello's homotopic theory \cite{Kevin-book}  of effective renormalizations in the \BV  formalism. This leads to a systematic  construction of factorization algebras via quantum field theories \cite{Kevin-Owen}. For example, a natural geometric interpretation of the Witten genus is obtained in such a way \cite{Kevin-genus}.

To facilitate geometric applications of effective renormalization methods, it would be crucial to connect renormalized quantities to geometric objects. We  will be mainly interested in quantum field theory with gauge symmetries. The most general framework of quantizing  gauge  theories is the \BV formalism \cite{BV}, where the quantum consistency of gauge transformations is described by the so-called \emph{quantum master equation}. There have developed several mathematical approaches to incorporate \BV formalism with renormalizations since their birth. The central quantity of all approaches lies in the renormalized quantum master equation. In this paper, we will mainly discuss the perturbative formalism in \cite{Kevin-book}, which has developed a convenient framework that is rooted in the homotopic culture of derived algebraic geometry. A brief introduction to the philosophy of this approach is discussed in Section \ref{section-BV}. 

The simplest nontrivial cases are quantum mechanical models, which can be viewed as quantum field theories in one dimension. An an example, consider the classical mechanical system with phase space a symplectic manifold $X$. Fedosov \cite{Fedosov} gave a beautiful construction of deformation quantization on $X$, which amounts to constructing a flat connection (called an abelian connection) on the Weyl bundle over $X$. It turns out that such geometric data has a precise quantum field theory interpretation in terms of effective renormalization \cite{GG, LLG,GLX}. It is shown in \cite{LLG} that every Fedosov deformation quantization arises from a perturbative quantization of topological mechanics. More precisely,  every solution to the renormalized quantum master equation produces a Fedosov's abelian connection.  The algebraic nature of this correspondence is reviewed in Section \ref{section-example}. Such a correspondence leads to a  geometric approach to the algebraic index theorem \cite{Fedosov-index,Nest-Tsygan},  where the index formula follows from the homotopic renormalization group flow together with an exact semi-classical approximation via BV integration \cite{LLG,GLX}.

In this paper, we study systematically the renormalized quantum master equation in two dimensions. We will focus on quantum theories obtained by chiral deformations of free conformal field theory(CFT)'s (see Section \ref{section-2d} for our precise set-up) on flat spaces. One important feature of such two dimensional chiral theories is that they are free of ultra-violet divergence (see Theorem \ref{thm-defn}). This greatly simplifies the analysis of quantization since singular counter-terms are not required.  However, the renormalized quantum master equation requires quantum deformations by chiral local functionals. Such quantum deformations could in principle be very complicated. 

One of our main results in this paper (Theorem \ref{main-thm}) is an exact description of the renormalized quantum master equations for chiral deformations of free $\beta\gamma-bc$ systems. Briefly speaking, Theorem \ref{main-thm} states that solutions to the renormalized quantum master equations (QME) correspond to solutions to the Maurer-Cartan (MC) equation of the modes Lie algebra of the chiral vertex algebra (see Definition \ref{defn-modes}) associated to the free theory

$$
\boxed{\text{renormalized QME}}\Longleftrightarrow \boxed{\text{MC equation for chiral modes Lie algebra}}
$$
Our main Theorem \ref{main-thm} is stated for chiral deformations of $\beta\gamma-bc$ systems, but it actually works in parallel  for chiral bosons as well. We do not present such a general discussion in this paper, but illustrate how to modify by a concrete example in Section \ref{section-B} (see Theorem \ref{thm-chiral-boson}).

Geometrically, the MC element leads to a cohomological operator $\mathscr Q$ (degree 1 and squares zero) acting on the vertex algebra $\V$, which can be identified with the BRST operator. The cohomology
$$
H(\V, \mathscr Q)
$$
gives a new chiral vertex algebra, which should be identified with local operators associated to the chiral deformed theory.  In particular, this chiral deformation can be used to realize the usual BRST construction of vertex algebras. 

Furthermore, we prove a general result on the modularity property of the generating functions on elliptic curves and their holomorphic anomaly (Theorem \ref{thm-modularity}). This work is partially motivated from understanding Dijkgraaf's description \cite{Dijkgraaf-chiral} of chiral deformation of conformal field theories. The Maurer-Cartan equation serves as an integrability condition for chiral vertex operators, which is often related to integrable hierarchies in concrete cases. We discuss one such example in Section \ref{section-B}.  See \cite{L-review,HLTY} for further discussions from the perspective of integrable hierarchy.

The above correspondence can be viewed as the two dimensional vertex algebra analogue of the one dimensional result in \cite{LLG}. In fact, one main motivation of the current work is to explore the analogue of index theorem for chiral vertex operators in terms of the method of semi-classical method proceeded in \cite{LLG, GLX}.  A family version of the above correspondence should lead to the analogue of Fedosov's connection on vertex algebra bundles over geometric spaces.

As an application in Section \ref{section-B},  we construct  an exact solution of quantum B-model on elliptic curves, which leads to the solution of the corresponding higher genus mirror symmetry conjecture. Mirror symmetry is a famous duality between symplectic (A-model) and complex (B-model) geometries that arises from superconformal field theories. It has been a long-standing challenge for mathematicians to construct  quantum B-model on compact Calabi-Yau manifolds. There is a categorical approach \cite{Partition, TCFT, KS} to the quantum B-model partition function associated to a Calabi-Yau category based on a classification of  two-dimensional
topological field theories. Unfortunately, it is extremely difficult to perform this categorical computation (recently a first non-trivial  categorical computation is carried out in \cite{CT} for one-point functions on the elliptic curve ).  Another approach is through quantum field theory. In \cite{Si-Kevin}, we construct a gauge  theory of polyvector fields on Calabi-Yau manifolds (called {BCOV theory}) as a generalization of the Kodaira-Spencer gauge theory \cite{BCOV}. It is proposed in \cite{Si-Kevin} (as a generalization of \cite{BCOV}) that the \BV quantization of BCOV theory leads to quantum B-model that is mirror to the A-model Gromov-Witten theory of counting higher genus curves. Our construction in Section \ref{section-B} gives a concrete realization of this program. This leads to the first mathematically fully established example of higher genus mirror symmetry on compact Calabi-Yau manifolds. 

Our result in Section \ref{section-B} also leads to an interesting result in physics. Quantum BCOV theory can be viewed as a complete description of topological B-twisted closed string field theory in the sense of Zwiebach \cite{Zwiebach}.  Zwiebach's closed string field theory describes the dynamics of closed strings in term of the so-called string vertices. Despite the beauty of this construction, string vertices are very difficult to compute and few concrete examples are known. Our exact solution in Section \ref{section-B} can be viewed as giving an explicit realization of Zwiebach's string vertices for B-twisted topological string on elliptic curves.  

\noindent \textbf{Acknowledgement:} The author would like to thank Kevin Costello, Cumrun Vafa, Andrei Losev, Robert Dijkgraaf, 	Jae-Suk Park, Owen Gwilliam, Ryan Grady, Qin Li, and Brian Williams for discussions on quantum field theories, and thank Jie Zhou for discussions on modular forms. Part of the work was done during visiting Perimeter Institute for theoretical physics and IBS center for geometry and physics. The author thanks for their hospitality and provision of excellent working enviroments. Special thank goes to Yunchen Li and Xinyi Li, whose birth and growth have inspired and reformulated many aspects of the presentation of the current work. This work is supported by the National Key Research and Development Program of China  (NO. 2020YFA0713000),  Beijing Municipal Natural Science Foundation (NO. Z180003), and National Natural Science Foundation of China (NO. 11801300).

\noindent \hypertarget{Conventions}{\textbf{Conventions}}
\begin{itemize}
\item Let $V$ be a $\Z$-graded  $k$-vector space. We use $V_m$ to denote its degree $m$ component. Given $a\in V_m$, we let $\bar a=m$ be its degree. 
\begin{itemize}
\item $V[n]$ denotes  the degree shifting of $V$ such that $V[n]_m=V_{n+m}$. 

\item $V^*$ denotes its dual such that $V^*_m=\Hom_k(V_{-m}, k)$. Our base field $k$ will  mainly be $\R$ or $\C$. 
\item $\Sym^m(V)$ and $\wedge^m(V)$ denote the graded symmetric product and graded skew-symmetric product respectively. We also denote
$$
  \Sym(V):=\bigoplus_{m\geq 0}\Sym^m(V), \quad \widehat{\Sym}(V):=\prod_{m\geq 0} \Sym^m(V). 
$$
Given $I=\sum\limits_{m\geq 0}I_m\in \widehat{\Sym}(V^*)$, and $a_1, \cdots, a_m\in V$, we denote its $m$-order Taylor coefficient 
$$
   {\pa\over \pa a_1}\cdots{\pa\over \pa a_m} I(0):= I_m(a_1, \cdots, a_m)
$$
where we have viewed $I_m$ as a multi-linear map $V^{\otimes m}\to k$.  

\item Given $P\in \Sym^2(V)$, it defines a ``second order operator" $\pa_P$ on $\Sym(V^*)$ or $\widehat{\Sym}(V^*)$ by 
$$
  \pa_P:  \Sym^m(V^*)\to \Sym^{m-2}(V^*), \quad I \to \pa_P I,
$$
where for any $a_1, \cdots, a_{m-2}\in V$, 
$$
  \pa_P I(a_1, \cdots a_{m-2}):= I(P, a_1, \cdots a_{m-2}). 
$$
\item $V[z]$, $V[[z]]$ and $V((z))$ denote polynomial series, formal power series and Laurent series respectively in a variable $z$ valued in $V$.
\end{itemize}

\item Let $V$ be a $\Z$-graded  $k$-vector space as above. Let $\{e_i\}$ be a basis of $V$ such that 
$$
V= \bigoplus_{i} \Z e_i. 
$$ 
We will identify
$$
\C[e_i]=\Sym(V), \quad \C[[e_i]]= \widehat{\Sym}(V). 
$$
This specifies the meaning of the completion $\C[[e_i]]$ used in this paper, i.e., the formal completion with respect to the polynomial degree. 

\item Let $\A$ be a graded  algebra. $[-,-]$ always means the graded commutator, i.e., for elements $a,b$ with specific degrees, 
$$
   [a,b]:= a\cdot b-(-1)^{\bar a \bar b}b \cdot a. 
$$
We always assume Koszul sign rule in dealing with graded objects.  

\item $\otimes$ without subscript means tensoring over the real numbers $\R$.  
\item Given a manifold $X$, we denote the space of real smooth forms by
$$
\Omega^\bullet(X)=\bigoplus_{k}\Omega^k(X)
$$ 
where $\Omega^k(X)$ is the subspace of $k$-forms. If furthermore X is a complex manifold, we denote the space of complex smooth forms  by
$$
\Omega^{\bullet, \bullet}(X)=\bigoplus_{p,q}\Omega^{p,q}(X)= \Omega^\bullet(X)\otimes \C
$$ 
where  $\Omega^{p,q}(X)$ is the subspace of $(p,q)$-forms. 
\item $\Dens(X)$ denotes the density line bundle on a manifold $X$. When $X$ is oriented, we naturally identify $\Dens(X)$ with top differential forms on $X$.  
\item Let E be a vector bundle on a manifold $X$.  $\E=\Gamma_c(X, E)$ denotes the space of smooth sections with compact supports, and $\E^\prime=D(X, E)$ denotes the distributional sections. If $E^*$ is the dual bundle of $E$, then we have a natural pairing 
$$
   \E^\prime \otimes \Gamma_c(X, E^*\otimes \Dens(X)) \to \R. 
$$

\item We will also work with infinite dimensional functional spaces that carry natural topologies. The above notions for $V$ will be generalized as follows. We refer the reader to \cite{treves} or Appendix 2 of \cite{Kevin-book} for further details.

\begin{itemize}
\item All topological vector spaces we consider will be {\it nuclear} (or at least {\it locally convex Hausdorff}) and we use $\hat\otimes$ to denote the {\it completed projective tensor product}. For example, 
\begin{itemize}
\item Given two manifolds $M, N$, we have a canonical isomorphism
\[
C^\infty (M) \hat\otimes C^\infty (N) = C^\infty (M \times N);
\]
\item Similarly, if $\mathcal{D} (M)$ denotes distributions, then again we have
\[
\mathcal{D} (M) \hat\otimes \mathcal{D} (N) = \mathcal{D} (M \times N).
\]
\end{itemize}
\end{itemize}

\item $\H$ denotes the upper half plane. 
\end{itemize}

\section{\BV formalism and effective renormalization}\label{section-BV}

In this section, we collect basics and fix notations on the quantization of gauge theories in the \BV (BV) formalism. We explain Costello's homotopic renormalization theory of \BV quantization and illustrate its geometric structures by a one-dimensional example of topological quantum mechanics. In the next section, we give a self-contained realization of Costello's framework in two dimensions based on heat kernel estimates. 

\subsection{\BV algebras and the master equation}
\begin{defn} A differential \BV (BV) algebra is a triple $(\A, Q, \Delta)$ where
\begin{itemize}
\item $\A$ is a $\Z$-graded commutative associative unital algebra. 
\item $Q: \A\to \A$ is a derivation of degree $1$ such that $Q^2=0$. 
\item $\Delta: \A \to \A$ is a linear operator of degree $1$ such that $\Delta^2=0$.
\item $\Delta$ is a ``second-order" operator. Precisely, define the binary operator $\fbracket{-,-}: \A\otimes \A \to \A$ 
 $$
    \fbracket{a,b}:=\Delta(ab)-(\Delta a)b- (-1)^{\bar a}a \Delta b, \quad a, b\in \A. 
$$
Then $\fbracket{-,-}$ satisfies the following graded Leibnitz rule
$$
\fbracket{a, bc}=\fbracket{a,b}c+(-1)^{(\bar a+1)\bar b}b\fbracket{a,c}, \quad a, b, c\in \A. 
$$

\item $Q$ and $\Delta$ are compatible: $[Q, \Delta]=Q\Delta+\Delta Q=0$. 
\end{itemize}
\end{defn}

Here $\Delta$ is called the \emph{BV operator}. $\fbracket{-,-}$ is called the \emph{BV bracket}, which measures the failure of $\Delta$ being a derivation. It follows  that $\fbracket{-,-}$ defines a Poisson bracket of degree $1$ satisfying
\begin{itemize}
\item $\fbracket{a,b}=(-1)^{\bar a \bar b}\fbracket{b,a}$. 
\item $\fbracket{a, bc}=\fbracket{a,b}c+(-1)^{(\bar a+1)\bar b}b\fbracket{a,c}$. 
\item $\Delta\fbracket{a,b}=-\fbracket{\Delta a, b}-(-1)^{\bar a}\fbracket{a, \Delta b}$. 
\end{itemize}

The $(Q,\Delta)$-compatibility condition implies the following Leibniz rule
$$
Q\fbracket{a,b}=-\fbracket{Qa, b}-(-1)^{\bar a}\fbracket{a, Qb}. 
$$

\begin{defn}
Let $(\A, Q, \Delta)$ be a differential BV algebra. A degree $0$ element $I\in \A_0$ is said to satisfy \emph{classical master equation} (CME) if 
$$
QI+{1\over 2}\fbracket{I,I}=0.
$$ 
\end{defn}

If $I$ solves CME, then it is easy to see that $Q+\fbracket{I,-}$ defines a differential on $\A$, which can be viewed as a Poisson deformation of $Q$. However, it may not be compatible with $\Delta$.  A sufficient condition for the compatibility is the ``divergence freeness" $\Delta I=0$. A slight generalization of this is the following. 

\begin{defn}
Let $(\A, Q, \Delta)$ be a differential BV algebra. A degree $0$ element $I\in \A[[\hbar]]$ is said to satisfy \emph{quantum master equation} (QME) if 
$$
QI+\hbar \Delta I+{1\over 2}\fbracket{I,I}=0.
$$
Here $\hbar$ is a formal variable representing the quantum parameter. 
\end{defn}

 The ``second-order" property of $\Delta$ implies that QME is formally equivalent (without worrying about the powers of $\hbar^{-1}$) to 
$$
   (Q+\hbar \Delta)e^{I/\hbar}=0. 
$$
If we decompose $I=\sum\limits_{g\geq 0}I_g\hbar^g$, then the $\hbar\to0$ limit of QME is precisely CME
$$
QI_0+{1\over 2}\fbracket{I_0, I_0}=0.
$$

We can rephrase the $(Q,\Delta)$-compatibility as $Q+\hbar \Delta$ squares to zero. It is direct to check that QME implies that $Q+\hbar \Delta+\fbracket{I,-}$ squares to zero as well. 

\subsection{Odd symplectic space and the toy model}
We discuss a toy model of differential BV algebra via $(-1)$-shifted symplectic space. This serves as the main motivation of our quantum field theory examples. 

Let $(V,Q)$ be a finite dimensional dg vector space. The differential $Q: V\to V$ induces a differential on various tensors of $V, V^*$, still denoted by $Q$. Let 
$$
   \omega \in \wedge^2 V^*, \quad Q(\omega)=0, 
$$
be a $Q$-compatible symplectic structure such that $\deg(\omega)=-1$. The non-degeneracy of the symplectic pairing $\omega$ implies that given $\varphi\in V^*$, there exists a unique $v_\varphi\in V$ such that 
$$
   \varphi(-)=\omega(-,v_\varphi).
$$
The condition $\deg(\omega)=-1$ says 
$$
\deg(v_\varphi)=\deg(\varphi)+1.
$$  
This gives an isomorphism of graded vector spaces
\begin{align*}
     V^* &\cong V[1],\quad \varphi \to v_\varphi[1]. 
\end{align*}
 
Let $K=\omega^{-1}\in \Sym^2(V)$ be the Poisson kernel of degree $1$ under
\begin{eqnarray*}
     \wedge^2 V^* &\cong \Sym^2(V)[2]\\
      \omega  &  K[2]
\end{eqnarray*}
where we have used the isomorphism 
\begin{align*}
\wedge^2(V[1])&\cong \Sym^2(V)[2]\\
a[1]\wedge b[1] &\to (-1)^{|b|} a\cdot b [2], \quad a, b \in V.  
\end{align*}

Let 
$$
\OO(V):=\widehat{\Sym}(V^*)=\prod_{n}\Sym^n(V^*).
$$
Then $(\OO(V), Q)$ is a graded-commutative dga. 

The degree $1$ Poisson kernel $K$ defines the following BV operator 
$$
    \Delta_K:=\pa_K: \OO(V) \to \OO(V) \quad \mbox{by}
$$
$$   
     \Delta_K(\varphi_1\cdots \varphi_n)=\sum_{i,j}\pm (K, \varphi_i\otimes \varphi_j) \varphi_1\cdots \hat{\varphi_i} \cdots \hat{\varphi_j}\cdots  \varphi_n, \quad \varphi_i\in V^*. 
$$
Here $(K, \varphi_i\otimes \varphi_j)$ denotes the natural paring between $V\otimes V$ and $V^*\otimes V^*$. $\pm$ is the Koszul sign by permuting $\varphi_i$'s.  The following lemma is well-known (we follow the presentations in \cite{Kevin-book} which will be relevant for our later discussions on effective renormalization). 

\begin{lem}\label{lem-dgs-BV} $(\OO(V), Q, \Delta_K)$ defines a differential BV algebra.
\end{lem}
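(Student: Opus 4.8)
The plan is to verify, one at a time, the four conditions defining a differential BV algebra; the first two are purely formal, and the content is in checking that $\Delta_K$ is a square-zero, second order operator of degree $1$ that commutes with $Q$. First I would observe that $\OO(V)=\widehat{\Sym}(V^*)$ is by construction $\Z$-graded commutative associative and unital, and that a derivation of a completed free graded-symmetric algebra is determined by its restriction to the generators; applying this to the differential on $V^*$ dual to $Q\colon V\to V$ yields the asserted degree $1$ derivation of $\OO(V)$, and $Q^2=0$ on $\OO(V)$ follows from $Q^2=0$ on $V$. So everything reduces to the analysis of $\Delta_K$.

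Next I would fix a homogeneous basis $\{e_i\}$ of $V$ with dual basis $\{x^i\}$ of $V^*$ and write $K=\sum_{i,j}K^{ij}\,e_i\otimes e_j$, graded symmetric, with scalars $K^{ij}$ that vanish unless $\bar e_i+\bar e_j=1$. Writing $\pa_{x^i}$ for the graded derivation of $\OO(V)$ dual to $x^i$ (of degree $\bar e_i$), the defining contraction formula for $\Delta_K$ becomes the constant-coefficient operator
$$
   \Delta_K=\tfrac12\sum_{i,j}\pm\,K^{ij}\,\pa_{x^i}\pa_{x^j},
$$
with the Koszul signs to be recorded precisely in the writeup. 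From this, two properties are immediate: $\Delta_K$ has degree $1$, since each surviving $\pa_{x^i}\pa_{x^j}$ has degree $\bar e_i+\bar e_j=1$; and $\Delta_K$ is second order, since the associated bracket $\fbracket{\varphi,\psi}=\Delta_K(\varphi\psi)-(\Delta_K\varphi)\psi-(-1)^{\bar\varphi}\varphi\,\Delta_K\psi$ restricts on generators to $\sum\pm K^{ij}(\pa_{x^i}\varphi)(\pa_{x^j}\psi)$, a biderivation.

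For $\Delta_K^2=0$ I would pass to a graded Darboux basis for the odd symplectic form $\omega$, in which $K$ is standard and $\Delta_K=\sum_a\epsilon_a\,\pa_{q^a}\pa_{p_a}$ with $\epsilon_a=\pm1$ and $\bar q^a+\bar p_a=-1$ for each conjugate pair. Then each $\pa_{q^a}\pa_{p_a}$ is an odd operator; since $\pa_{q^a}$ and $\pa_{p_a}$ have opposite parity, one of the two is an odd derivation and squares to zero, which kills the diagonal terms in $\Delta_K^2$, while distinct summands anticommute (being odd), so the off-diagonal terms cancel in pairs. Equivalently, one can argue from the coordinate formula above that swapping the two contracted index pairs introduces the overall sign $(-1)^{\bar K}=-1$, forcing the double sum to cancel.

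Finally, for the compatibility $[Q,\Delta_K]=0$ I would apply the Leibniz rule for the derivation $Q$ to the defining formula for $\Delta_K$: the terms in which $Q$ hits one of the two contracted factors reassemble into the second order operator $\Delta_{Q(K)}$ associated to $Q(K)\in\Sym^2(V)$, and the remaining terms cancel, giving the operator identity $[Q,\Delta_K]=\Delta_{Q(K)}$. The isomorphism $\wedge^2V^*\simeq\Sym^2(V)[2]$ taking $\omega$ to $K=\omega^{-1}$ is $Q$-equivariant, so $Q(\omega)=0$ forces $Q(K)=0$, hence $[Q,\Delta_K]=0$. The only real bookkeeping anywhere is the Koszul signs in the $\Delta_K^2=0$ step, which is precisely why I would execute that step in Darboux coordinates, where the cancellation is manifest; the rest is formal.
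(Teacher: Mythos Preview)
Your argument is correct. The paper does not actually prove this lemma: it is introduced with the phrase ``The following lemma is well-known'' and no proof is given. So there is nothing to compare against; your write-up supplies exactly the standard verification the paper leaves to the reader.

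A couple of minor remarks for when you write it up. First, the existence of a graded Darboux basis is itself a small lemma (nondegeneracy of $\omega$ pairs $V_m$ with $V_{1-m}$, and one diagonalizes blockwise); it is routine but worth one sentence so the reader sees why the reduction is legitimate in the $\Z$-graded setting. Second, your alternative ``equivalently'' argument for $\Delta_K^2=0$ via the sign $(-1)^{\bar K}=-1$ is really the same computation phrased invariantly: $\Delta_K^2=\tfrac12\partial_{K\otimes K}$ as a fourth-order contraction, and the graded symmetry of $K$ together with $\bar K=1$ forces $K\otimes K$ to be antisymmetric under swapping the two copies, hence zero in $\Sym^4(V)$. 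Either version is fine; the Darboux one is easier to get the signs right in. Finally, the identity $[Q,\Delta_K]=\Delta_{Q(K)}$ is exactly the $[Q,\partial_P]=\partial_{QP}$ identity the paper uses later (e.g.\ in the sketch of Lemma~\ref{lem-HRG}), so you are in good company invoking it.
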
 
\begin{proof}[Sketch of proof] The compatibility of $\omega$ and $Q$ implies $Q(K)=0$, hence 
$$
[Q, \Delta_K]=[Q, \pa_K]=[\pa_{Q(K)}]=0. 
$$
Other properties of differential BV algebra are easily verified. 
\end{proof}
The above construction can be summarized as 
$$
\mbox{$(-1)$-shifted dg symplectic}  \Longrightarrow \mbox{differential BV}.
$$

\begin{rmk}\label{rmk-Poisson} Lemma \ref{lem-dgs-BV} holds for any $K\in \Sym^2(V)$ of degree $1$ such that $Q(K)=0$. In particular, it applies to the case for $(-1)$-shifted dg Poisson structure where $K$ may be degenerate (so $\omega$ does not exist) but $\Delta_K$ is still well-defined. We will see such an example in Section 4. 
\end{rmk}

\subsection{UV problem and homotopic renormalization}
Let us now move on to discuss examples of quantum field theory that we will be mainly interested in. 

\subsubsection{The ultra-violet problem}
One important feature of quantum field theory is its infinite dimensionality. It leads to the main challenge in mathematics to construct measures on infinite dimensional space (called the path integrals). It is also the source of the difficulty of ultra-violet divergence and the motivation for the celebrated idea of renormalization in physics.  Let us address some of these issues via the \BV formalism. 

In the previous section, we discuss the $(-1)$-shifted dg symplectic space $(V, Q, \omega)$. There $V$ is assumed to be finite dimensional. This is the reason we call it ``toy model". Typically in quantum field theory, $V$ will be modified to be the space of smooth sections of certain vector bundles on a smooth manifold, while the differential $Q$ and the pairing $\omega$ come from something ``local". Such $V$ will be called the \emph{space of fields}, which is evidently a very large space with delicate topology. Nevertheless, let us naively perform similar constructions as that in the toy model. 

More precisely, let $X$ be a smooth oriented manifold without boundary. Let $E$ be a $\Z$-graded vector bundle on $X$ of finite rank. Our space of fields $\E$ replacing $V$ will be the space of smooth global sections with compact support
$$
   \E= \Gamma_c(X, E)
$$
equipped with a differential operator of degree $1$
$$
   Q: \E\to \E
 $$
such that $Q^2=0$. We assume that $(\E, Q)$ is an elliptic complex. The symplectic pairing will be 
$$
   \omega(s_1, s_2):= \int_X \bracket{s_1, s_2}, \quad s_1, s_2 \in \E, 
$$
where 
$$
   \bracket{-,-}: E \otimes E \to \Dens(X)
$$
is a non-degenerate graded skew-symmetric bundle morphism of degree $-1$. Here the line bundle $\Dens(X)$ sits at degree $0$. The symplectic pairing $\omega$ is required to be compatible with $Q$.

To perform the toy model construction, we need the following steps
\begin{enumerate}
\item The dual vector space $\E^*$ (analogue of $V^*$). 

This can be defined as the space of continuous linear duals of $\E$ (i.e. distributional sections of the bundle $\Hom(E, \Dens(X)$)
$$
    \E^*:= \Hom(\E, \R)
$$
where $\Hom$ is the space of continuous maps. 
\item The tensor space $(\E^*)^{\otimes k}$ (analogue of $(V^*)^{\otimes k}$). 

This can be defined via the completed tensor product for distributions 
$$
      (\E^*)^{\otimes k}= \E^*\hat \otimes \cdots \hat \otimes \E^*,
$$
i.e. the distributional sections of the bundle $\Hom(E \boxtimes \cdots \boxtimes E, \Dens(X\times \cdots \times X))$ over $X\times \cdots \times X$.  $\Sym^k(\E^*)$ is defined to be the subspace of $(\E^*)^{\otimes k}$ by taking invariants of the graded permutations. Then we have a well-defined notion (via distributions)
$$
   \OO(\E):=\prod_{k\geq 0} \Sym^k(\E^*)
$$
as the analogue of $\OO(V)$. 

\item The Poisson kernel $K_0=\omega^{-1}$ (the analogue of $K$). 

In the finite-dimensional setting, the Poisson kernel is simply the inverse matrix of the symplectic pairing.  In our infinite-dimensional setting, this amounts to asking the Poisson kernel to be the integral kernel of the identity operator with respect to $\omega$, that is, $K_0$ satisfying
 \[\omega(K_0 (x,y),\phi(y) )=\phi(x)\qquad\text{for all}\quad \phi\in \mc E.\]
  Since $\omega$ is given by integration, $K_0$ behaves like a $\delta$-function. We can view $K_0$ as a distributional section of $E\times E$ supported on the diagonal of $X\times X$. Again $K_0$ is graded symmetric. 
  
 Precisely, let $\E^\prime$ be the space of distributional sections of $E$ (See \hyperlink{Conventions}{Conventions}). The pairing $\omega$ induces a natural identification of graded topological vector spaces
$$
\E^\prime[1] \iso   \E^*. 
$$
  
  Let $\Sym^2(\E)\subset \E\hat\otimes \E$ be the subspace of graded-symmetric elements and $\Sym^2(\E^\prime)\subset \E^\prime\hat\otimes \E^\prime$ be the corresponding distributional sections. Unlike the finite dimensional case, $K_0$ is not an element of $\Sym^2(\E)$, but in fact a distributional section 
$$
   K_0\in \Sym^2(\E^\prime).
$$ 
The isomorphism $\E^\prime[1] \iso   \E^*$ induces a natural pairing 
$$
\bracket{\E^\prime\hat\otimes \E^\prime} \times  \bracket{\E\hat\otimes \E}\to \C. 
$$
Then $K_0$ is determined by the value of its pairing with an arbitrary element $\psi_1\otimes\psi_2 \in \E\otimes \E$ 
$$
K_0 \times (\psi_1\otimes\psi_2 )\to \omega(\psi_1, \psi_2). 
$$

\end{enumerate}
It is at Step (3) where we get trouble. In fact, if we naively define the BV operator 
$$
   \Delta_{K_0}: \OO(\E)\stackrel{?}{\to} \OO(\E),
$$
then $\Delta_{K_0}$ is \emph{ill-defined}, since we can not pair a distribution $K_0$ with another distribution from $\OO(\E)$. This difficulty originates from the infinite dimensional nature of the problem. 

\subsubsection{Homotopic renormalization} The solution to the above problem requires the method of renormalization in quantum field theory. There are several different approaches to renormalizations. We will work with Costello's homotopic theory \cite{Kevin-book} in this paper that will be convenient for our applications. 

The key observations are 
\begin{enumerate}
\item $K_0$ is a $Q$-closed distribution: $Q(K_0)=(Q\otimes 1+1\otimes Q) K_0=0$. 
\item elliptic regularity: there is a canonical isomorphism (Atiyah-Bott Lemma \cite{AB})
$$
H^*(\text{smooth}, Q)\iso H^*(\text{distribution}, Q).
$$
\end{enumerate}
It follows that we can find a distribution $P_r\in \Sym^2(\E^\prime)$ and a smooth element $K_r\in \Sym^2(\E)$ such that
$$
   K_0=K_r+ Q(P_r). 
$$
$P_r$ is the familiar notion of a parametrix. In other words, $K_r$ is a smooth representative of the cohomology class of $K_0$. 

\begin{defn} $K_r$ will be called the renormalized BV kernel with respect to the parametrix $P_r$. 
\end{defn}

Since $K_r$ is smooth, there is no problem to pair $K_r$ with distributions. The same formula as in the toy model leads to 

\begin{lem-defn}\label{defn-BV-kernel} We define the renormalized BV operator $\Delta_{K_r}$ 
$$
   \Delta_{K_r}: \OO(\E)\to \OO(\E)
$$
via the smooth renormalized BV kernel $K_r$. The triple $(\OO(\E), Q, \Delta_{K_r})$ defines a differential BV algebra, called the renormalized differential BV algebra (with respect to $P_r$). 
\end{lem-defn}

Therefore $(\OO(\E), Q, \Delta_{K_r})$ can be viewed as a homotopic replacement of the original naive problematic differential BV algebra.  As the formalism suggests, we need to understand relations between difference choices of the parametrices. 

Let $P_{r_1}$ and $P_{r_2}$ be two parametrices, $K_{r_1}$ and $K_{r_2}$ be the corresponding renormalized BV kernels. Let us denote
$$
  P_{r_1}^{r_2}:=P_{r_2}-P_{r_1}. 
$$
Since $Q(P_{r_1}^{r_2})=K_{r_1}-K_{r_2}$ is smooth, $P_{r_1}^{r_2}$ is smooth itself by elliptic regularity. 

\begin{defn} $P_{r_1}^{r_2}$ will be called the regularized propagator. 
\end{defn}

\begin{eg}\label{eg-heat-kernel} Typically, suppose we have an adjoint operator $Q^\dagger$ such that 
$
     [Q, Q^\dagger]
$
is a generalized Laplacian. For example, we may pick a metric on $\E$ and define the adjoint $Q^\dagger$ via the $L^2$ inner product. In all the examples that we will be dealing with, such constructed $[Q, Q^\dagger]$ will be a generalized Laplacian. 

Given $t>0$,  let $K_t$ be the integral kernel for the heat operator $e^{-t[Q, Q^\dagger]}$ with respect to the symplectic pairing $\omega$. In other words, $K_t$ is a section of $\E \boxtimes \E$ that is graded-symmetric and satisfies 
 \[\omega(K_t (x,y),\phi(y) )=\bracket{e^{-t[Q, Q^\dagger]}\phi}(x)\qquad\text{for all}\quad \phi\in \mc E.\]
By the theory of generalized Laplacian, such integral kernel $K_t$ exists and is smooth any for $t>0$. It approaches the $\delta$-function distribution when $t\to 0$ (see for example \cite{BGV}). 

The operator equation (using the fact that $[Q, Q^\dagger]$ graded commutes with $Q$ and $Q^\dagger$)
$$
  \bbracket{Q, \int_{t_1}^{t_2} Q^\dagger e^{-t[Q, Q^\dagger]}dt}=\int_{t_1}^{t_2} \bbracket{Q,Q^\dagger} e^{-t[Q, Q^\dagger]}dt=e^{-t_1[Q, Q^\dagger]}- e^{-t_2[Q, Q^\dagger]}
$$
is translated to the following kernel equation 
$$
Q(P_{t_1}^{t_2}):=  (Q\otimes 1+1\otimes Q) P_{t_1}^{t_2}=K_{t_1}-K_{t_2}. 
$$
Here 
$$
   P_{t_1}^{t_2}=\int_{t_1}^{t_2} (Q^\dagger\otimes 1) K_t dt 
$$
is the integral kernel for the operator $Q^\dagger e^{-t[Q, Q^\dagger]}$ with respect to the symplectic pairing $\omega$. 

We find that $K_t$  can be viewed as a renormalized BV kernel for any $t>0$. In this case,  the paramatrix is $P_0^t$ and the regularized propagator is $P_{t_1}^{t_2}$. Such heat kernel renormalization will be our main applications. 
\end{eg}

Next we explain how to formulate quantum master equations. The key obervation is that $P_{r_1}^{r_2}$ can be viewed as a homotopy linking two different renormalized differential BV algebras. 
In fact, analogous to the definition of renormalized BV operator, 
\begin{defn}
We define $\pa_{P_{r_1}^{r_2}}: \OO(\E)\to \OO(\E)$ as the second-order operator of contracting with the smooth kernel $P_{r_1}^{r_2}\in \Sym^2(\E)$ (see also Conventions). 
\end{defn}

\begin{lem}\label{lem-HRG} The following equation holds formally as operators on $\OO(\E)[[\hbar]]$
$$
     \bracket{Q+\hbar \Delta_{K_{r_2}}}e^{\hbar \pa_{P_{r_1}^{r_2}}}=e^{\hbar \pa_{P_{r_1}^{r_2}}}\bracket{Q+\hbar \Delta_{K_{r_1}}},
$$
i.e., the following diagram commutes
$$
\xymatrix{
    \OO(\E)[[\hbar]] \ar[rr]^{Q+\hbar \Delta_{K_{r_1}}} \ar[d]_{\exp\bracket{\hbar \pa_{P_{r_1}^{r_2}}}} && \OO(\E)[[\hbar]] \ar[d]^{\exp\bracket{\hbar \pa_{P_{r_1}^{r_2}}}}\\
    \OO(\E)[[\hbar]] \ar[rr]_{Q+\hbar \Delta_{K_{r_2}}} && \OO(\E)[[\hbar]]
}
$$
\end{lem}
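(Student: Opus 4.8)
The plan is to verify the displayed identity as an equality of $\hbar$-linear operators on $\OO(\E)[[\hbar]]$; since everything here is formal, the work reduces to a short commutator computation. Write $P:=P_{r_1}^{r_2}$. First I would dispose of any analytic worry: $\partial_P$, $\Delta_{K_{r_1}}$ and $\Delta_{K_{r_2}}$ are all honest operators on $\OO(\E)$ because $P$, $K_{r_1}$, $K_{r_2}$ are smooth, and each of them lowers the symmetric word-length by $2$, hence is nilpotent on every $\Sym^m(\E^*)$; consequently $e^{\hbar\partial_P}=\sum_k\frac{\hbar^k}{k!}\partial_P^k$ is well-defined on $\OO(\E)[[\hbar]]$ (the coefficient of $\hbar^k$ is the single operator $\frac1{k!}\partial_P^k$), and it is invertible with inverse $e^{-\hbar\partial_P}$. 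So the commuting square is equivalent to
$$
  e^{-\hbar\partial_P}\bigl(Q+\hbar\Delta_{K_{r_2}}\bigr)e^{\hbar\partial_P}=Q+\hbar\Delta_{K_{r_1}},
$$
which is what I would aim to prove.

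The two ingredients I would establish are the following. (i) Operators of the form ``contract against a symmetric $2$-tensor in $\Sym^2(\E)$'' graded-commute with one another, so $[\partial_P,\Delta_{K_{r_1}}]=[\partial_P,\Delta_{K_{r_2}}]=0$; this is transparent in local coordinates, where all of them are constant-coefficient second-order operators, modulo a careful application of the Koszul sign rule (here $\partial_P$ has degree $0$ and $\Delta_K$ degree $1$). (ii) $Q$ acts as a degree-$1$ derivation of $\OO(\E)$ and satisfies the Leibniz-type identity $[Q,\partial_P]=\Delta_{Q(P)}$, where $Q(P)=(Q\otimes 1+1\otimes Q)P\in\Sym^2(\E)$. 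As already noted in the excerpt, $Q(P)=Q(P_{r_2})-Q(P_{r_1})=(K_0-K_{r_2})-(K_0-K_{r_1})=K_{r_1}-K_{r_2}$, so by linearity of $K\mapsto\Delta_K$ this gives
$$
  [Q,\partial_P]=\Delta_{K_{r_1}}-\Delta_{K_{r_2}}.
$$

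Finally I would expand the conjugation using the adjoint-action series $e^{-A}Be^{A}=B-[A,B]+\tfrac12[A,[A,B]]-\cdots$ with $A=\hbar\partial_P$ and $B=Q+\hbar\Delta_{K_{r_2}}$. Identity (i) kills $[A,\hbar\Delta_{K_{r_2}}]$, and identity (ii) gives $[A,Q]=-\hbar[Q,\partial_P]=-\hbar(\Delta_{K_{r_1}}-\Delta_{K_{r_2}})$; since this last operator again graded-commutes with $\partial_P$, all iterated brackets of order $\geq2$ in $A$ vanish. Hence the conjugation equals $Q+\hbar\Delta_{K_{r_2}}+\hbar(\Delta_{K_{r_1}}-\Delta_{K_{r_2}})=Q+\hbar\Delta_{K_{r_1}}$, which is exactly the asserted identity (equivalently, the commuting diagram). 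I expect the only genuine obstacle to be the sign bookkeeping in (i) and (ii) — in particular checking that $[Q,\partial_P]=\Delta_{Q(P)}$ comes with a plus sign under the paper's Koszul conventions from the explicit definitions of $Q$, $\partial_P$ and $\Delta_K$; the remainder is a formal manipulation of the exponential series.
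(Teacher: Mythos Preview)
Your proposal is correct and follows essentially the same route as the paper. The paper's proof is only a one-line sketch: it records the key identity $[Q,\partial_{P_{r_1}^{r_2}}]=\Delta_{K_{r_1}}-\Delta_{K_{r_2}}$ and refers to Costello's book for the remaining formal manipulation; you have simply unpacked that manipulation explicitly (well-definedness of the exponential, vanishing of the higher adjoint brackets via commutativity of the contraction operators).
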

\begin{proof}[Sketch of proof] $Q(P_{r_1}^{r_2})=K_{r_1}-K_{r_2}$ implies
$$
  \bbracket{Q, \pa_{P_{r_1}^{r_2}}}=\Delta_{K_{r_1}}-\Delta_{K_{r_2}}. 
$$
It follows that 
$$
\exp\bracket{\hbar \pa_{P_{r_1}^{r_2}}} Q \exp\bracket{-\hbar \pa_{P_{r_1}^{r_2}}}=Q-  \hbar \bbracket{Q, \pa_{P_{r_1}^{r_2}}}=Q-\hbar \Delta_{K_{r_1}}+\hbar\Delta_{K_{r_2}}.
$$
Since the operators $\Delta_{K_{r_1}}, \Delta_{K_{r_2}}, \pa_{P_{r_1}^{r_2}}$ graded commute with each other, we find
$$
\exp\bracket{\hbar \pa_{P_{r_1}^{r_2}}} 
\bracket{Q+\hbar\Delta_{K_{r_1}}} \exp\bracket{-\hbar \pa_{P_{r_1}^{r_2}}}=Q+\hbar \Delta_{K_{r_2}}.
$$
This proves the Lemma. See \cite[Chapter 5, Section 9]{Kevin-book} for further discussions. 
\end{proof}

\begin{defn}\label{defn-stable} Let 
$$
\OO^+(\E)[[\hbar]]:=\Sym^{\geq 3}(\E^*)+\hbar \OO(\E)[[\hbar]]\subset \OO(\E)[[\hbar]]
$$ 
be the subspace of those functionals which are at least cubic modulo $\hbar$. Given any two parametrices $P_{r_1}, P_{r_2}$, we define the homotopic renormalization group (HRG) operator 
\begin{align*}
   W(P_{r_1}^{r_2}, -)&:  \OO^+(\E)[[\hbar]]\to \OO^+(\E)[[\hbar]]\\
    W(P_{r_1}^{r_2}, I)&:=\hbar \log \bracket{e^{\hbar \pa_{P_{r_1}^{r_2}}}e^{I/\hbar}}. 
\end{align*}
\end{defn}
The real content of the above formula is 
$$
  W(P_{r_1}^{r_2}, I)=\sum_{\Gamma\ \text{connected}}  {W_\Gamma(P_{r_1}^{r_2}, I)}
$$
where the summation is over all connected Feynman graphs with $P_{r_1}^{r_2}$ being the propagator and $I$ being the vertex (see for example \cite{graph} for Feynman graph techniques). Wick's Theoreom identifies the above two expressions. In particular,  the graph expansion formula implies that HRG operator $W(P_{r_1}^{r_2},-)$ is well-defined on $\OO^+(\E)[[\hbar]]$. We refer to \cite{Kevin-book} for a thorough discussion in the current context. 

Lemma \ref{lem-HRG} motivates  the following definition. 

\begin{defn}[\cite{Kevin-book}] A solution of effective quantum master equation is an assignment $I[r]\in \OO(\E)[[\hbar]]$ for each parametrix $P_r$ satisfying
\begin{itemize}
\item Renormalized quantum master equation (RQME)
$$
(Q+\hbar \Delta_{K_{r}})e^{I[r]/\hbar}=0.
$$
\item Homotopic renormalization group flow equation (HRG): for any two parametrices $P_{r_1}, P_{r_2}$,
$$
 I[r_2]=W(P_{r_1}^{r_2}, I[r_1]). 
 $$
\end{itemize}
\end{defn}
RQME and HRG are compatible by Lemma \ref{lem-HRG}. A solution of effective quantum master equation is completely determined by its value at a fixed parametrix. Functionals at other paramatrices are obtained via HRG. 

\begin{rmk} Here we adopt the name ``homotopic RG flow" as opposed to the name "RG flow" in \cite{Kevin-book}. If the manifold preserves a rescaling symmetry, it will induce a rescaling action on the solution space of effective quantum master equations. Such flow equation will be called RG flow to be consistent with the physics terminology. 

\end{rmk}

\subsubsection{Locality and counter-term technique}\label{section-CT}
In practice, we obtain solutions of renormalized quantum master equations via local functionals with the help of the method of counter-terms. We explain this construction in this subsection. 

\begin{defn}
A functional $I\in \OO(\E)$ is called \emph{local} if it can be expressed in terms of an integration of a Lagrangian density $\L$
$$
   I(\phi)=\int_X \mathcal L(\phi), \quad \phi \in \E. 
$$
$\L$ can be viewed as a $\Dens(X)$-valued function on the jet bundle of $\E$. The subspace  of local functionals of $\OO(\E)$ is denoted by $\Ol(\E)$.  We also denote
$$
\Ol^+(\E)[[\hbar]]= \OO^+(\E)[[\hbar]]\cap \Ol(\E)[[\hbar]].
$$
\end{defn}

Let us assume we are in the situation of Example \ref{eg-heat-kernel}. Given $L>0$, we have a smooth regularized BV kernel $K_L$ in terms of the heat kernel. Let $P_\epsilon^L$ be the regularized propagator for $0<\epsilon<L<\infty$. 

Given any local functional $I\in \Ol^+(\E)[[\hbar]]$, we can find an $\epsilon$-dependent local functional $I^{CT}(\epsilon)\in \hbar \Ol(\E)[[\hbar]]$, such that the following limit exists
$$
   I[L]=\lim_{\epsilon\to 0}W(P_\epsilon^L, I+I^{CT}(\epsilon))\in \OO^+(\E)[[\hbar]]. 
$$
Here $I^{CT}(\epsilon)$ has a singular dependence on $\epsilon$ when $\epsilon\to 0$, and this singularity exactly cancels those that come from the naive graph integral $W(P_\epsilon^L, I)$. $I^{CT}(\epsilon)$ is called the \emph{counter-term}, which plays an important role in the renormalization theory of quantum fields. For a proof of the existence of counter-terms in the current context, see for example \cite[Appendix 1]{Kevin-book}. 

By construction, $I[L]$ satisfies HRG for all heat kernel regularizations:  
$$
   I[L_2]=W(P_{L_1}^{L_2}, I[L_1]), \quad \forall 0<L_1, L_2<\infty. 
$$
It remains to analyze the quantum master equation. 

Firstly we observe that although the BV operator $\Delta_L$ becomes singular as $L\to 0$, its associated BV bracket is in fact well-defined on local functionals. This is because only a single delta-function appears in the limit $L\to 0$ of the bracket and it can be naturally integrated within local functionals. 

\begin{defn}\label{defn-BV-bracket} We define the classical BV bracket on $\Ol(\E)$ by
$$
  \fbracket{I_1, I_2}:=\lim_{L\to 0}\fbracket{I_1, I_2}_L, \quad I_1, I_2 \in \Ol(\E). 
$$
\end{defn}

\begin{defn} A local functional $I\in \Ol(\E)$ is said to satisfy classical master equation (CME) if
$$
  QI+{1\over 2}\fbracket{I, I}=0. 
$$
\end{defn}
Therefore classical master equation does not require renormalization. In practice, any local functional with a gauge symmetry can be completed into a local functional that satisfies the classical master equation. This fact lies in the heart of the \BV formalism. 

\begin{rmk}\label{rmk-CME}
Given $I_0\in \Ol(\E)$ satisfying the classical master equation, $Q+\fbracket{I_0,-}$ defines a square-zero vector field on $\E$. The physical meaning is that it generates the infinitesimal gauge transformation. In mathematical terminology, it defines a (local) $L_\infty$ structure on $\E$. 
\end{rmk}

To proceed to construct solutions of effective quantum master equations, let us naively use counter-terms to construct a family $I[L]$ as above from $I_0$. It can be  shown that $I[L]$ satisfies renormalized quantum master equation modulo $\hbar$
$$
   Q I[L]+\hbar \Delta_L I[L]+{1\over 2}\fbracket{I[L], I[L]}_L= O(\hbar). 
$$
Then we need to find quantum corrections deforming $I_0$ via higher order terms in $\hbar$
$$
I_0+ \hbar I_1+\hbar^2 I_2+\cdots \in \Ol^+(\E)[[\hbar]]
$$  
such that the renormalized quantum master equation holds true at all orders of $\hbar$. This  can be formulated as a deformation problem. The quantum corrections may become very complicated in general, and intrinsic obstructions for solving the quantum master equation could exist at certain $\hbar$-order (gauge anomalies). Nevertheless, one of our main purposes here is to understand the geometric meaning of such quantum corrections and find their solutions.

\subsection{Example: Topological quantum mechanics}\label{section-example}
The simplest nontrivial example is when $X$ is one-dimensional. This corresponds to quantum mechanical models. We explain the one dimensional Chern-Simons theory that is studied in detail in \cite{LLG}. In such a geometric situation, the renormalized BV master equation is related to Fedosov's abelian connection on Weyl bundles \cite{Fedosov}. In the next section, we generalize this analysis to two dimensional models.

Let $X=S^1$. Let $V$ be a graded vector space  with a degree 0 symplectic pairing
$$
  (-,-): \wedge^2 V\to \R. 
$$ 
The space of fields will be 
$$
   \E= \Omega^\bullet(S^1)\otimes V. 
$$
Let us denote $X_{dR}$ by the super-manifold whose underlying topological space is $X$ and whose structure sheaf is the de Rham complex on $X$. We can identify $\varphi\in \E$ with a map $\hat \varphi$ between super-manifolds
$$
 \hat \varphi: X_{dR} \to V
$$
whose underlying map on topological spaces is around the constant map to $0\in V$. 

The differential $Q=d_{S^1}$ on $\E$ is the de Rham differential on $\Omega^\bullet(S^1)$. The $(-1)$-shifted symplectic pairing is the pairing 
$$
  \omega(\varphi_1, \varphi_2):=\int_{S^1} (\varphi_1, \varphi_2). 
$$
The induced differential BV structure is precisely the AKSZ-formalism \cite{AKSZ} applied to the one-dimensional $\sigma$-model. Given $I\in \OO(V)$ of degree $k$, it induces an element $\hat I\in \OO(\E)$ of degree $k-1$ via
$$
   \hat I(\varphi):=\int_{S^1} \hat\varphi^*(I), \quad \forall \varphi \in \E. 
$$

We choose the standard flat metric on $S^1$ and use the heat kernel regularization as in Example \ref{eg-heat-kernel}. The following Theorem is established in \cite{LLG}.

\begin{thm}[\cite{LLG}]\label{thm-weyl} Given $I\in \Sym^{\geq 3}(V^*)+\hbar\OO(V)[[\hbar]]$ of degree $1$, the limit 
$$
   \hat I[L]= \lim_{\epsilon \to 0} W(P_\epsilon^L, \hat I)
$$
exists as an degree $0$ element of $\OO^+(\E)[[\hbar]]$. The family $\{\hat I[L]\}_{L>0}$ solves the effective quantum master equation if and only if 
$$
   \bbracket{I, I}_\star=0. 
$$
Here $\OO(V)[[\hbar]]$ inherits a natural Moyal-Weyl product $\star$ from the linear symplectic form $(-,-)$. $[-,-]_\star$ is the commutator with respect to the Moyal-Weyl product. 
\end{thm}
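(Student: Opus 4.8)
The plan is to reduce the effective quantum master equation to a statement about the original finite-dimensional BV algebra $\OO(V)$ via the AKSZ push-forward, using the explicit combinatorics of Feynman graphs on $S^1$. First I would set up the heat kernel regularization explicitly: with the flat metric on $S^1=\R/\Z$, the heat kernel $K_L$ and the propagator $P_\eps^L$ are essentially the $1$-form part of the heat kernel, so the regularized propagator is a smooth section of $(\Omega^\bullet(S^1))^{\otimes 2}\otimes \Sym^2(V)$ tensoring the Poisson kernel $K\in\Sym^2(V)$ with a smooth kernel on $S^1\times S^1$. The key geometric input is that on $S^1$ all Feynman graph integrals that contribute are severely constrained: only graphs that are trees or have exactly one loop can survive (the propagator contains a de Rham one-form factor and $S^1$ is one-dimensional, so a graph with $b_1\geq 2$ loops would need to integrate a form of degree exceeding $\dim S^1$ at some vertex). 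This is the mechanism that forces the $\hbar$-expansion to truncate in the right way and makes the answer computable.

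The second step is to identify the weight of a wheel (one-loop) graph and of trees. For the trees, Wick contraction of $\hat I$ against itself reproduces, in the $\eps\to0$ limit, the terms $\{I,I\}$ of the BV bracket on $\OO(V)$ composed with the single integration over $S^1$ from the AKSZ construction; combined with $Q=d_{S^1}$ acting on $\hat I$ this gives the $QI+\frac12\{I,I\}$ piece. For the one-loop (wheel) graphs with $n$ external legs, the weight is a configuration-space integral over ordered points on $S^1$ of a product of propagator one-forms around the cycle. The crucial computation — and I expect this to be the technical heart — is to show that this cyclic integral, after the $\eps\to 0$ renormalization, evaluates to exactly the combinatorial coefficient appearing in the $n$-th term of the Moyal/Weyl product expansion, i.e. that the wheel with $n$ inputs produces $\tfrac{1}{n}\Tr$ of the $n$-fold iterated contraction with $K$, matching the definition of $[-,-]_\star$. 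One then checks that the $\hbar\Delta_{K_L}$ term accounts precisely for the graphs where a propagator is replaced by the (smooth part of the) BV kernel, closing up a loop, so that RQME becomes the single identity $[I,I]_\star=0$ in $\OO(V)[[\hbar]]$.

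The third step is the bookkeeping that assembles these pieces: one shows $(Q+\hbar\Delta_{K_L})e^{\hat I[L]/\hbar}=0$ is equivalent, term by term in the graph expansion, to $d_{S^1}$ exactness of the integrands together with the vanishing of the total $S^1$-integral of the Moyal-commutator density. The HRG compatibility and existence of the $\eps\to0$ limit are already guaranteed by the general counter-term machinery of Section \ref{section-CT} (and in fact, as the statement of Theorem \ref{thm-weyl} indicates, no counter-terms are needed here because $S^1$ is one-dimensional and the graph integrals are already convergent), so I would only need to verify that the naive limit exists, which follows from the one-loop truncation plus the smoothness of $P_\eps^L$ away from the diagonal and the mildness of the diagonal singularity in dimension one.

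The main obstacle I anticipate is the precise evaluation of the wheel integrals and the matching of their renormalized values with the Moyal product coefficients: this requires carefully choosing the propagator (for instance the one built from $Q^\dagger=d^*$ as in Example \ref{eg-heat-kernel}, whose $\eps\to0,L\to\infty$ limit is the Green's kernel $g(x,y)$ with $dg=\delta-1$ on $S^1$) and then computing $\int_{\Delta_n}dg(x_1,x_2)\wedge\cdots\wedge dg(x_n,x_1)$ over the cyclically-ordered simplex, showing it equals $1/n$ (or the appropriate sign-weighted analogue) independently of $L$. Getting the signs, the symmetry factors $|\mathrm{Aut}(\Gamma)|$, and the Koszul conventions to line up with the normalization of $\star$ is delicate but mechanical; the conceptual content is entirely in the one-loop exactness and the wheel evaluation, after which the theorem follows by comparing generating series.
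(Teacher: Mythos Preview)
The paper does not itself prove Theorem~\ref{thm-weyl}; it is quoted from \cite{LLG}. However, the two-dimensional analogue, Theorem~\ref{main-thm}, \emph{is} proved in full in the paper, and its structure makes clear that your proposed mechanism is incorrect. Your central claim---that only tree and one-loop graphs contribute because ``the propagator contains a de Rham one-form factor''---rests on a false premise. In the heat-kernel regularization of Example~\ref{eg-heat-kernel} one has $P_\epsilon^L=\int_\epsilon^L (d^*\otimes 1)K_t\,dt$; since $d^*$ lowers form degree, the propagator is a \emph{function} on $S^1\times S^1$ tensored with the Casimir of $V$ and carries no one-form factor at all (it is $K_L$, not $P_\epsilon^L$, that has the $(dx_1-dx_2)$ factor). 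There is therefore no form-degree obstruction to graphs of arbitrary loop number. More decisively, the Moyal commutator $[I,I]_\star$ has a term at every order of $\hbar$: the $\hbar^n$ piece is the $n$-fold contraction of two copies of $I$ against the Poisson kernel. In graph language these are \emph{two-vertex} diagrams with $n$ internal edges and hence $b_1=n-1$. Your one-loop truncation would recover only the Poisson bracket plus a single correction, and your wheel integrals over $n$ cyclically ordered points on $S^1$ are not the objects that encode $[I,I]_\star$ at all.

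The argument that actually works (see the proof of Theorem~\ref{main-thm} for the two-dimensional version, which is the template for \cite{LLG}) is a \emph{two-vertex} reduction, not a one-loop reduction. One conjugates $Q+\hbar\Delta_L$ through the HRG flow to obtain $\lim_{\epsilon\to 0}e^{\hbar\partial_{P_\epsilon^L}}\bigl((Q+\hbar\Delta_\epsilon)e^{\hat I/\hbar}\bigr)$, uses locality of $\hat I$ to kill $\Delta_\epsilon\hat I$ and $d_{S^1}\hat I$, and is left with $e^{\hbar\partial_{P_\epsilon^L}}\bigl(\tfrac{1}{2}\{\hat I,\hat I\}_\epsilon\, e^{\hat I/\hbar}\bigr)$. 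In the $L\to 0$ limit only the diagrams $\Gamma_m$ with exactly two $\hat I$-vertices, $m$ propagator edges $P_\epsilon^L$, and one $K_\epsilon$ edge survive; the explicit evaluation of $W_{\Gamma_m}$ then yields precisely the order-$\hbar^{m}$ term of the Moyal commutator. The bookkeeping you describe in your third step is roughly right, but it has to be organized around this two-vertex picture rather than around loop number.
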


In \cite{LLG}, the above theorem is further extended to a family version of $V$ parametrized by a symplectic manifold. Then the effective quantum master equation is equivalent to a flat connection gluing the Weyl bundle (i.e. the bundle of Weyl algebra with fiberwise Moyal-Weyl product). As beautifully shown by Fedosov \cite{Fedosov}, flat sections of the Weyl bundle leads to a deformation quantization of the Poisson algebra of smooth functions on $X$. A further analysis of the partition function leads to a geometric formulation of the algebraic index theorem \cite{LLG,GLX}.

\section{Vertex algebra and BV master equation}\label{section-VA}
In this section we study \BV quantization of two dimensional chiral deformations of free theories on flat surfaces. We give a self-contained treatment of Costello's homotopic renormalization framework in this case using heat kernel estimates of graph integrals established in \cite{Li-modular}. We show that such two dimensional chiral theories are free of ultra-violet divergence (Theorem \ref{thm-defn}), and establish our main theorem on the correspondence between solutions of the renormalized quantum master equation and Maurer-Cartan elements of the modes Lie algebra of chiral vertex operators (Theorem \ref{main-thm}). We prove the modularity property of generating functions of chiral theories on elliptic curves and establish the polynomial nature of their anti-holomorphic dependence on the moduli (Theorem \ref{thm-modularity}). 

\subsection{Vertex algebra} In this section we collect some basics on vertex algebras that will be used in this paper. We refer to \cite{Kac,Frenkel} for details. 

\subsubsection{Definition of vertex algebras} In this section $\V$ will always denote a $\Z/2\Z$-graded superspace over $\C$. It has two components with different parities 
$$
   \V=\V_{\bar 0}\oplus \V_{\bar 1}, \quad \Z/2\Z=\{\bar 0, \bar 1\}. 
$$
We say $v\in \V$ has parity $p(a)\in \Z/2\Z$ if $v\in \V_{p(v)}$.  

\begin{defn} A \emph{field} on $\V$ is a power series 
$$
  A(z)=\sum_{k\in \Z}A_{(k)}z^{-k-1}\in \End(\V)[[z, z^{-1}]]
$$
such that for any $v\in \V,   A(z) v\in \V((z))$.  We will denote 
$$
  A(z)_+=\sum_{k< 0}A_{(k)} z^{-k-1}, \quad A(z)_-=\sum_{k\geq 0}A_{(k)} z^{-k-1}. 
$$
$A_{(k)}$'s will be called Fourier modes of $A$.

\begin{itemize}
\item Given two fields $A(z), B(z)$, we define their \emph{normal ordered product} 
$$
  :A(z)B(w):= A(z)_+B(w)+(-1)^{p(A) p(B)} B(w)A(z)_-. 
$$
Note that $\End(\V)$ is naturally a superspace.  $p(A), p(B)$ are the parities. 
\item Two fields $A(z), B(z)$ are called \emph{mutually local} if 
$$ 
   A(z)B(w)=\sum_{k=0}^{N-1} {C_k(w)\over (z-w)^{k+1}}+:A(z)B(w):.
$$
for some $N\in \Z^{\geq 0}$ and fields $C_k$. The first term on the right is called the \emph{singular part}, and we shall write
$$
A(z)B(w)\sim \sum_{k=0}^{N-1} {C_k(w)\over (z-w)^{k+1}}.
$$
The above two formulae are called the \emph{operator product expansion} (OPE). 
\end{itemize}
\end{defn}

\begin{defn} A \emph{vertex algebra} is a collection of data
\begin{itemize}
\item (space of states) a superspace $\V$; 
\item (vacuum) a vector $\ket{0}\in \V_{\bar 0}$;
\item (translation operator) an even linear operator $T: \V\to \V$;
\item (state-field correspondence) an even linear operation (vertex operators)
$$
    Y(\cdot, z): \V\to \End \V[[z,z^{-1}]]
$$
taking each $A\in \V$ to a field 
$
  Y(A,z)=\sum\limits_{n\in Z}A_{(n)}z^{-n-1};
$
\end{itemize}
satisfying the following axioms:
\begin{itemize}
\item (vacuum axiom) $Y(\ket{0},z)=\Id_\V$. Furthermore, for any $A\in \V$ we have
$$
   Y(A,z)\ket{0}\in \V[[z]], \quad \text{and}\quad \lim_{z\to 0} Y(A,z)\ket{0}=A. 
$$
\item (translation axiom) $T\ket{0}=0$. For any $A\in \V$, $
   \bbracket{T, Y(A,z)}=\pa_z Y(A,z);
$
\item (locality axiom) All fields $Y(A,z), A\in \V$, are mutually local. 
\end{itemize}
\end{defn}

Let $A, B \in \V$.  Their OPE can be expanded as
$$
   Y(A,z)Y(B,w)=\sum_{n\in \Z}{Y(A_{(n)} \cdot B, w)\over (z-w)^{n+1}},
$$ 
where $\fbracket{A_{(n)}\cdot B}_{n\in \Z}$ can be viewed as defining an infinite tower of products. 

\begin{defn} A vertex algebra $\V$ is called \emph{conformal}, of \emph{central charge} $c\in \C$, if there is a vector $\omega_{vir}\in \V$ (called a conformal vector) such that under the state-field correspondence $Y(\omega_{vir},z)=\sum\limits_{n\in \Z}L_n z^{-n-2}$:
\begin{itemize}
\item $L_{-1}=T$, $L_0$ is diagonalizable on $V$;
\item $\{L_n\}_{n\in \Z}$ span the Virasoro algebra with central charge $c$.
\end{itemize}
\end{defn}

Let $\V$ be a conformal vertex algebra. The field $Y(\omega,z)$ will often be denoted by $T(z)$, called the \emph{energy momentum tensor}.  $\V$ can be decomposed as
$$
  \V=\bigoplus_{\alpha} \V^\alpha, \quad L_0|_{\V_\alpha}=\alpha. 
$$
$A\in \V^\alpha$ is said to have conformal weight $\alpha$. Its field will often be expressed by
$$
   Y(A,z)=\sum_{k} A_kz^{-k-\alpha}, \quad A_k: \V^{\bullet}\to \V^{\bullet-k},
$$
where $A_k$ has conformal weight $-k$. In previous notations, $A_k=A_{(k+\alpha-1)}$. 

\subsubsection{Modes Lie algebra}\label{defn-Lie-algebra} Following the presentation in \cite{Frenkel},  we associate a canonical Lie algebra to a vertex algebra via Fourier modes of vertex operators. 

\begin{defn}\label{defn-modes} Given a vertex algebra $\V$, we define a Lie algebra, denoted by $\oint \V$, as follows. As a vector space, the Lie algebra $\oint \V$ has a basis given by $A_{(k)}$'s
$$
  \oint \V:=\text{Span}_{\C}\fbracket{\oint dz z^k Y(A,z):=A_{(k)}}_{A\in \V, k\in \Z}. 
$$
The Lie bracket is determined by the OPE  (Borcherds commutator formula)
$$
    \bbracket{A_{(m)}, B_{(n)}}=\sum_{j\geq 0}\binom{m}{j}\bracket{A_{(j)}B}_{(m+n-j)}. 
$$ 
\end{defn}

Here we use a different notation $\oint$ than $U(\cdot)$ in \cite[Section 4.1]{Frenkel} to emphasize its nature of mode expansion. The commutator relations are better illustrated formally in terms of residues
$$
\bbracket{\oint dz z^{m} Y(A,z), \oint dw w^n Y(B,w)}=\oint dw w^n \oint_{w}dz z^m \sum_{j\in \Z}{Y(A_{(j)} \cdot B, w)\over (z-w)^{j+1}},
$$
where $\oint_w dz:=\int_{C_w}{dz\over 2\pi i}$ is the integration over a small loop $C_w$ around $w$. 

Equivalently, the vector space $\oint \V$ can be described as
$$
   \oint \V= \V[z,z^{-1}]/\im \pa
$$
where $\pa (A \otimes z^k):=T(A)\otimes z^k+ kA\otimes z^{k-1}, A\in V$. Then $A\otimes z^k$ represents the Fourier mode $\oint dz z^k Y(A,z)$ and $\im \pa$ represents the space of total derivatives. 

\begin{rmk} For an element $I$ of $\V$, its zero mode $I_{0}$ is a derivation. Hence if $I_{0}$ has degree $1$ and squares to zero, then the pair $(\V, I_{0})$ is a dg vertex algebra whose cohomology plays the role of BRST reduction.  See \cite[Lemma 3.3.8 and Section 5.7.3]{Frenkel} for a detailed discussion on this. One of the main purpose of this section is to developed a rigorous field theory realization of such BRST reduction within the \BV quantization method. This is illustrated by Theorem \ref{main-thm}. 
\end{rmk}

\subsection{Free theory examples} In this subsection, we describe several free theory examples of vertex algebras given by $\beta \gamma-bc$ systems and chiral bosons (Heisenberg vertex algebra). Our main goal in this paper is to study chiral deformations of such free theories. The basic reference are \cite{Frenkel, CFT}.

\subsubsection{$\beta \gamma-b c$ system} \label{section-VA-example} Let $\h=\oplus_{\alpha\in \Q} \h^\alpha$, where $\h^\alpha=\h^\alpha_{\bar 0}\oplus \h^\alpha_{\bar 1}$,  be a $\Q$-graded superspace. Here the $\Q$-grading is the conformal weight and we assume for simplicity only finitely many weights appear in $\h$. The data of conformal weight will not be used for our main theorem in Section \ref{section-2d}. It gives important information on various selection rules.

Let $\h$ be equipped with an even symplectic pairing
$$
   \abracket{-,-}: \wedge^2 \h\to \C
$$
which is of conformal weight $-1$, that is, the only nontrivial pairing is 
$$
    \abracket{-,-}: \h^{\alpha}\otimes \h^{1-\alpha}\to \C. 
$$
For each $a\in \h^\alpha$, we associate a field
$$
    a(z)=\sum_{r\in \Z-\alpha} a_r z^{-r-\alpha}.
$$
We define their singular part of OPE by
$$
   a(z)b(w)\sim \bracket{i\hbar \over \pi}{\abracket{a,b}\over (z-w)}, \quad \forall a, b\in \h. 
$$
This is equivalent to the commutator relations
$$
   [a_{r}, b_{s}]={i\hbar\over \pi }\abracket{a,b} \delta_{r+s,1}, \quad \forall a, b\in \h, r\in \Z-\alpha, s\in \Z+\alpha.
$$
The vertex algebra $\V[\h]$ that realizes the above OPE relations is given by the Fock representation space. The vacuum vector satisfies 
$$
    a_r\ket{0}=0,\quad \forall a\in \h^\alpha, r+\alpha>0,
$$
and $\V[\h]$ is freely generated from the vacuum by the operators $\{a_r\}_{r+\alpha\leq 0}, a\in \h^\alpha$. For any $a\in \h$,  $a(z)$ becomes a field acting naturally on the Fock space $\V[\h]$.  $\V[\h]$ is a conformal vertex algebra, with energy momentum tensor 
$$
   T(z)={1\over 2}\sum_{i,j}\omega_{ij}:\pa a^i(z)a^j(z):
$$
where $\{a^i\}$ is a basis of $\h$, $\omega_{ij}$ is the inverse matrix of $\abracket{a^i,b^j}$ so that $\sum_k \omega_{ik}\abracket{a^k,a^j}=\delta^j_i$.  The central charge is $\dim \h_{\bar 0}-\dim \h_{\bar 1}$.

\begin{rmk}In quantum field theory language, this system is described by the free chiral theory 
$$
   {1\over 2} \sum_{i,j} \omega_{ij}  \int d^2z a^i \dbar a^j
$$
and $a^i$'s are fundamental fields. When $\h=\h_{\bar 0}$ is purely bosonic, the associated vertex algebra is the $\beta\gamma$ system (or the chiral differential operators \cite{CDR, CDO}). When $\h=\h_{\bar 1}$ is purely fermionic, the associated vertex algebra is the $bc$ system. 
\end{rmk}

Under the state-field correspondence, the vertex algebra $\V[\h]$ can be identified with the polynomial algebra 
$$
   \V[\h]\iso \C\bbracket{\pa^k a^i}, \quad a^i\ \text{is a basis of}\ \h, k\geq 0. 
$$

We also denote its formal completion with respect to the polynomial degree by (See \hyperlink{Conventions}{Conventions})
$$
   \V[[\h]]:=\C[[\pa^k a^i]]. 
$$
$\V[\h],\V[[\h]]$ can be viewed as the chiral analogue of the Weyl algebra.

\subsubsection{Heisenberg vertex algebra}\label{sec:chiral-boson} Let $\h=\h_{\bar 0}$ be a finite dimensional vector space (of pure even type) equipped with an inner product $\abracket{-,-}$.
For each $a\in \h$, we associate a field
$$
    a(z)=\sum_{r\in n} a_n z^{-n-1}.
$$
We define their singular part of OPE by
$$
   a(z)b(w)\sim \bracket{i\hbar \over \pi}{\abracket{a,b}\over (z-w)^2}, \quad \forall a, b\in \h. 
$$
This is equivalent to the commutator relations
$$
   [a_{n}, b_{m}]={i\hbar\over \pi }\abracket{a,b} n\delta_{n+m,0}, \quad \forall a, b\in \h, \quad m, n \in \Z.
$$
The Heisenberg vertex algebra $\V[\h]$ that realizes the above OPE relations is also given by the Fock representation space. The vacuum vector satisfies 
$$
    a_n\ket{0}=0,\quad \forall a\in \h,\quad n\geq 0,
$$
and $\V[\h]$ is freely generated from the vacuum by the operators $\{a_n\}_{n<0}, a\in \h$. For any $a\in \h$,  $a(z)$ becomes a field acting naturally on the Fock space $\V[\h]$.  $\V[\h]$ is a conformal vertex algebra, with energy momentum tensor 
$$
   T(z)={1\over 2}\sum_{i}:\pa a^i(z)\pa a^i(z):
$$
where $\{a^i\}$ is an orthonormal basis of $\h$.  The central charge is $\dim \h$.

\begin{rmk}In quantum field theory language, this system is described by free chiral bosons
$$
   \sum_{i} \int d^2z \pa_z \varphi^i \dbar_{\bar z} \varphi^i.
$$
The fields $a^i$ can be identified with $a^i=\pa_z \phi^i$ which has conformal weight $1$. 
\end{rmk}

Under the state-field correspondence, the vertex algebra $\V[\h]$ can be identified with the polynomial algebra 
$$
   \V[\h]\iso \C\bbracket{\pa^k a^i}, \quad a^i\ \text{is a basis of}\ \h, k\geq 0. 
$$
Under this isomorphism, the vacuum $\ket{0}$  corresponds to $1$, the operator $a^i_{-n-1}$ corresponds to multiplication by ${\pa^n a^i\over n!}$, and the translation $T$ corresponds to the differentiation $\pa$. 

We also denote its formal completion with respect to the polynomial degree by (See \hyperlink{Conventions}{Conventions})
$$
   \V[[\h]]:=\C[[\pa^k a^i]]. 
$$

This example will play an important role in our application in Section \ref{section-B}.

\subsection{Chiral deformation of $\beta \gamma-bc$ system on flat spaces}\label{section-2d} Let $\Sigma$ be a complex curve, $K_\Sigma$ be its canonical line bundle. We are interested in the following data as a BV set-up:
\begin{itemize}
\item $(E,\delta)$ is a $\Z$-graded holomorphic bundle $E$ on $\Sigma$ with a square-zero holomorphic differential operator $\delta$ of degree 1;
\item A degree $0$ symplectic pairing of bundles 
$$
      (-,-):  E \otimes E \to K_\Sigma.
$$
Here $K_\Sigma$ is viewed as a line bundle concentrated at degree $0$. 
\end{itemize}
The space of quantum fields associated to the above data will be compactly supported Dolbeault complex
$$
 \E= \Omega_c^{0,\bullet}(\Sigma, E). 
$$
The $(-1)$-symplectic pairing $\omega$ on $\E$ is obtained via
$$
    \xymatrix{ \E \otimes \E \ar[dr]_{\omega} \ar[rr]^{(-,-)} && \Omega_c^{0,\bullet}(\Sigma, K_\Sigma)= \Omega_c^{1, \bullet}(\Sigma)\ar[dl]^{\int_{\Sigma}}\\
    & \C &
    }
$$
and  we require its compatibility with $\delta$. 

Then the triple $(\E, Q=\dbar+\delta, \omega)$ defines an infinite dimensional $(-1)$-symplectic structure in the sense of section \ref{section-BV}. Our main goal in this section is to study the associated renormalized quantum master equation, which can be viewed as chiral deformations of $\beta\gamma-bc$ systems.

We will mainly consider the flat space in this paper when $\Sigma=\C, \C^*$ or the elliptic curve $E_\tau=\C/ (\Z\oplus \Z\tau)$ ($\tau \in \H$). We assume this from now on. 

\begin{defn}We will fix a coordinate $z$ on $\Sigma$: this is the linear coordinate on $\C$; $z\sim z+1$ on $\C^*$; and $z\sim z+1\sim z+\tau$ on $E_\tau$. Our convention of the volume form is
$$
   {d^2z}:= {i \over 2} dz\wedge d\bar z. 
$$
We also use the following notations.  Let $z=x+i y$ where $x,y$ are real coordinates. Let $\ora{z}$ denotes $(x,y)$. Then $f(\ora{z})$ means a smooth function on $z$ while $f(z)$ means a holomorphic function. 
\end{defn}

Since $\Sigma$ is flat, we will focus on the situation in this paper when $\bracket{E, (-,-)}$ comes from linear data $(\h, \abracket{-,-})$ as follows:
\begin{itemize}
\item $\h=\bigoplus\limits_{m\in \Z} \h_m$ is a $\Z$-graded vector space (the grading is the cohomology degree) and each $\h_m$ is of finite dimension;
\item $\abracket{-,-}: \wedge^2 \h \to \C$ is a degree $0$ symplectic pairing;
\item $E=\Sigma\times \h$, and $\E=\Omega_c^{0,\bullet}(\Sigma)\otimes \h$;
\item $\delta\in   \C\bbracket{\pa\over \pa z} \otimes \bigoplus\limits_{m} \Hom(\h_m, \h_{m+1})$. Here $\C\bbracket{\pa\over \pa z}$ represents translation invariant holomorphic differential operators on $\Sigma$. Moreover, $\delta^2=0$;
\item $\omega\bracket{-,-}$ is induced from fiberwise $\abracket{-,-}$ via the identification $K_\Sigma\iso \OO_{\Sigma} dz$
$$
  \omega \bracket{\varphi_1, \varphi_2}:=\int dz\wedge \abracket{ \varphi_1, \varphi_2}. 
$$
We require that $\delta$ is compatible with $\omega\bracket{-,-}$. 
\end{itemize}

We consider the dual $\Z$-graded vector space $\h^*$ with the induced symplectic pairing still denoted by $\abracket{-,-}$. The $\Z/2\Z$-grading associated to the $\Z$-grading defines the parity on $\h^*$. The extra $\Z$-grading of conformal weight will not be explicit at this stage. At this stage, we assume for simplicity that the pairing $\abracket{-,-}$ on $\h^*$ has conformal weight $-1$ and obtain the vertex algebra $\V[[\h^*]]$ as in Section \ref{section-VA-example}. The same discussion can be easily generalized when the conformal weight $\abracket{-,-}$ is different from $-1$ or examples of chiral bosons. We describe such an example of Heisenberg vertex algebra in Section \ref{section-B}.

\subsubsection{chiral local functional}\label{sec:chiral local}
The relevant chiral local functionals will be described by elements of $\oint \V[[\h^*]]$. 

\begin{defn}\label{defn-functional} Given $I\in \V[[\h^*]][[\hbar]]$, we extend it $\Omega_c^{0,\bullet}[[\hbar]]$-linearly to a map 
$$
   I:  \E \to \Omega_c^{0,\bullet}[[\hbar]]. 
$$
Explicitly, if $I=\sum\pa^{k_1}a_1\cdots \pa^{k_m}a_m\in \V[[\h^*]]$, where $a_i \in \h^*$,  $\varphi \in \E$, then 
$$
    I(\varphi)= \sum \pm \pa^{k_1}_z{a_1(\varphi)}\cdots \pa^{k_m}_z{a_m(\varphi)}.
$$
Here ${a_i(\varphi)}\in \Omega_c^{0,\bullet}$ comes from the natural pairing between $\h^*$ and $\h$. $\pa_z$ is the holomorphic derivative with respect to our prescribed linear coordinate $z$ on $\Sigma$. $\pm$ is the Koszul sign. We associate a local functional $\hat I\in \Ol(\E)[[\hbar]]$ on $\E$  by
$$
   \hat I(\varphi):={i}\int_{\Sigma} dz\  I(\varphi) , \quad \varphi \in \E.  
$$
Note that $\deg(\hat I)=\deg(I)-1$ for each homogenous element $I=\pa^{k_1}a_1\cdots \pa^{k_m}a_m$. It differs by $-1$ because the integration 
map $
\int dz (-)
$ is of cohomology degree $-1$ (only non-vanishing on $\Omega^{0,1}$ component). 
\end{defn}

The following definition is analogous to Definition \ref{defn-stable}. 

\begin{defn} We define the subspace $\V^+[[\h^*]][[\hbar]]\subset \V[[\h^*]][[\hbar]]$ by saying that $I\in \V^+[[\h^*]][[\hbar]]$ if and only if $\lim\limits_{\hbar\to 0}I$ is at least cubic when $\V[[\h^*]]$ is  viewed as a (formal) polynomial ring. 
\end{defn}

\subsection{Ultra-violet finiteness}\label{sec:regularization}
\subsubsection{Regularization} We work with heat kernel regularization as in Example \ref{eg-heat-kernel}.

We will fix the standard flat metric on $\Sigma$. Let $\dbar^*$ denote the adjoint of $\dbar$, and $h_t\in  \cinfty(\Sigma\times\Sigma),t>0$,  be the heat kernel function of the Laplacian operator $H=\bbracket{\dbar, \dbar^*}$. It is normalized by 
$$
  (e^{-tH}f)(\ora{z_1})=\int_{\Sigma} d^2z_2 \ h_t(\ora{z_1},\ora{z_2}) f(\ora{z_2}), \quad t>0. 
$$

For $\Sigma=\C$,  we have $\dbar^*=-2\pa_z \iota_{\bar\pa_{\bar z}}$ and $H=-2\pa_{z}\bar\pa_{\bar z}={1\over 2}\bracket{\pa_x^2+\pa_y^2}$ where $z=x+i y$, and $\iota_{\bar \pa_{\bar z}}$ is the contraction with the vector field ${\pa \over \pa {\bar z}}$. Then explicitly 
$$
   h_t(\ora{z}_1,\ora{z_2})={1\over 2\pi t}e^{-|z_1-z_2|^2/2t}. 
$$
When $\Sigma=\C^*$ or $E_\tau$, $h_t$ is obtained from the above heat kernel on $\C$ by a further summation over the relevant lattices.

Note that $\delta$ commutes with $\dbar$ and $\dbar^*$, hence $H=[Q, \dbar^*]=[\dbar+\delta, \dbar^*]$. The regularized BV kernel $K_L\in \Sym^2(\E)$ is given by 
$$
 K_L(\ora{z_1}, \ora{z_2})= {i}\ h_L(\ora{z_1}, \ora{z_2})(d\bar z_1\otimes 1-1\otimes d\bar z_2)C_\h.  
$$
Here 
$C_\h=\sum_{i,j} \omega_{ij}(a^i\otimes a^j)
$ is the Casimir element where $\{a^i\}$ is a basis of $\h$, $\omega_{ij}$ is the inverse matrix of $\abracket{a^i,b^j}$. The normalization constant is chosen such that 
$$
    (e^{-L H}\varphi)(\ora{z_1})= -{1\over 2}\int_{\Sigma} dz_2 \wedge \abracket{ K_L(\ora{z_1}, \ora{z_2}), \varphi(\ora{z_2})}, \quad \forall \varphi \in \E
$$
where $\abracket{-,-}$ inside the integral is the pairing in the $z_2$-component. The factor ${1\over 2}$ is the symmetry factor, while the extra minus sign comes from passing $K_L$ through $dz_2$. Then $K_0$ is precisely the desired singular Poisson tensor. The regularized propagator is 
$$
  P_\epsilon^L=\int_{\epsilon}^L (\dbar^*\otimes 1)K_u du, \quad 0<\epsilon<L<\infty.
$$

\subsubsection{UV finiteness}
The first important property of our 2d chiral deformed theory is the absence of ultra-violet divergences. 

\begin{thm}\label{thm-defn} Assume the set-up of $\beta\gamma-bc$ system in Section \ref{section-2d}. Let $I\in \V^+[[\h^*]][[\hbar]]$ of degree $1$ and $\hat I$ be the associated chiral local functional  defined in Definition \ref{defn-functional}. Let $W(-,-)$ be the homotopy RG flow operator defined in Definition \ref{defn-stable}. Then the limit 
$$
   {\hat I[L]}:= \lim_{\epsilon \to 0} W(P_\epsilon^L, \hat I), \quad L>0
$$
exists as a degree $0$ element of $\OO^+(\E)[[\hbar]]$. $\{\hat I[L]\}_{L>0}$ defines a family of $\OO^+(\E)[[\hbar]]$ satisfying the homotopic RG flow equation, and $\lim\limits_{L\to 0}\hat I[L]=\hat I$. 
\end{thm}
\begin{proof} When $\Sigma=\C$, the existence of the limit $\epsilon\to 0$ follows from Prop \ref{finiteness lem}. The other cases $\Sigma=\C^*,\Sigma=E_\tau$ follow from the result on $\C$ since the corresponding propagators have the same singular behavior (see the argument of \cite[Lemma 3.1]{Li-modular} on this formal reasoning). The homotopic RG flow equation follows by construction (see also the discussion in Section \ref{section-CT}).
\end{proof}

In terms of the discussion in Section \ref{section-CT}, it says that singular $\epsilon$-dependent counter terms are not needed. In other words, the UV divergence is absent and we say the theory is UV finite.  

We remark that the order of limit is important in the proof of Theorem \ref{thm-defn}. The chiral nature of the problem implies that all potential singularities in $W(P_\epsilon^L, I)$ in fact vanish upon integration by parts before taking the limit $\epsilon\to 0$ \cite{Li-modular}. 

\begin{rmk}\label{rmk:chiral-boson} Theorem \ref{thm-defn} is stated only for $\beta\gamma-bc$ system for convenience. It in fact works also for chiral deformations of chiral bosons. This is because the relevant propagator (regularization of ${1\over (z_1-z_2)^2}$) is given by a further holomorphic derivative of $P_\epsilon^L$ described as above, and Prop \ref{finiteness lem} still holds in this case. We choose not to state a general effective BV renormalization theory for chiral bosons in this paper. It involves further complications by introducing degenerate BV theory.  Instead, we explain by a concrete example in Section \ref{section-B} (see Theorem \ref{thm-chiral-boson}) and illustrate the modifications in practice.

\end{rmk}

\begin{rmk}
The UV property for chiral deformations is known to physicists via the method of point-splitting regularization \cite{Dijkgraaf-chiral, Dou95}. A geometric theory for such regularization is systematically developed in  \cite{Si-Jie}. We use the heat kernel regularization in this paper to fit into the effective renormalization method as developed in \cite{Kevin-book}. 
\end{rmk}

\subsection{Quantum master equation}\label{sec: QME-OPE} In this subsection, we study the quantization problem for chiral deformations of $\beta\gamma-bc$ systems and analyze the renormalized quantum master equation for $\hat I[L]$ constructed in Theorem \ref{thm-defn}. We assume the set-up in Section \ref{section-2d}.

 The differential $\delta$ induces naturally a differential on the formal polynomial ring $\V[[\h^*]]$ (denoted by the same symbol)
$$
    \delta: \V[[\h^*]]\to \V[[\h^*]].
$$
Let us write $\delta=D\otimes\phi$,  where $D$ represents a holomorphic differential operator and $\phi\in \Hom(\h,\h)$. Let $\phi^*\in  \Hom(\h^*,\h^*)$ be the dual of $\phi$. Then in terms of generators, 
$$
  \delta(\pa_z^ka):=\pa_z^k D (\phi^*(a)), \quad a\in \h^*. 
$$
Since $\delta$ commutes with $\pa_z$, it induces a differential on the Fourier modes
$$
  \delta: \oint \V[[\h^*]]\to \oint \V[[\h^*]]. 
$$
Recall we have a natural Lie bracket defined on $\oint \V[[\h^*]][[\hbar]]$ as in Section \ref{defn-Lie-algebra}. 

\begin{lem}The differential $\delta$ and the Lie bracket $[,]$ define a structure of differential graded Lie algebra on $\oint \V[[\h^*]][[\hbar]]$.
\end{lem}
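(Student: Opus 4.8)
The plan is to verify the three defining properties of a differential graded Lie algebra directly on generators, exploiting the two structural facts we already have in hand: that $\oint\V[[\h^*]][[\hbar]]$ carries the mode Lie bracket of Section~\ref{defn-Lie-algebra} (a genuine Lie bracket by the standard vertex-algebra argument, Borcherds' commutator formula together with skew-symmetry and the Jacobi identity for the OPE), and that $\delta$ is by construction a degree $+1$ operator on $\V[[\h^*]]$ with $\delta^2=0$, since it is induced from the differential complex $(E^\bullet,\delta)$ in the set-up of Section~\ref{section-2d}. First I would check that $\delta$ descends to $\oint\V[[\h^*]]=\V[[\h^*]][z,z^{-1}]/\im\,\pa$: because $\delta=D\otimes\phi^*$ with $D\in\C[\pa_z]$ a translation-invariant differential operator, $\delta$ commutes with the translation operator $T=L_{-1}$ and with multiplication by $z^k$, hence it preserves $\im\,\pa$ where $\pa(A\otimes z^k)=T(A)\otimes z^k+kA\otimes z^{k-1}$; therefore the quotient map is well-defined and $\delta^2=0$ is inherited. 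This gives the underlying complex.

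Next I would establish the Leibniz rule $\delta[\alpha,\beta]=[\delta\alpha,\beta]+(-1)^{\bar\alpha}[\alpha,\delta\beta]$ for $\alpha,\beta\in\oint\V[[\h^*]]$. The cleanest route is to observe that $\delta$, viewed as an operator on the vertex algebra, is a \emph{derivation} of all the $n$-th products $A_{(n)}B$: since $\delta$ is built from $\pa_z$ (which is $T$, a derivation of every $A_{(n)}$ by the translation axiom) and from the fiberwise linear map $\phi^*$ extended as a derivation of the polynomial ring $\V[[\h^*]]=\C[[\pa^k a^i]]$ compatibly with the OPE coming from the $\phi$-equivariant pairing, one checks $\delta(A_{(n)}B)=(\delta A)_{(n)}B+(-1)^{\bar A}A_{(n)}(\delta B)$ on generators and propagates it by the non-commutative Wick formula. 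Feeding this into Borcherds' commutator formula $[\,A_{(m)},B_{(n)}\,]=\sum_{j\ge0}\binom{m}{j}(A_{(j)}B)_{m+n-j}$ immediately yields the graded Leibniz identity for the mode bracket. The graded skew-symmetry and graded Jacobi identity of $[\,-,-\,]$ are exactly the statement that $\oint\V$ is a Lie algebra, which we may cite from Section~\ref{defn-Lie-algebra} (following~\cite{Frenkel}), and they are unaffected by the presence of the formal parameter $\hbar$, over which everything is linear.

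The main obstacle I anticipate is bookkeeping rather than conceptual: one must pin down precisely the sense in which $\delta$ is a derivation of the OPE, i.e.\ confirm compatibility between the differential $\phi^*$ on $\h^*$ and the symplectic pairing $\abracket{-,-}$ that defines the commutators $[a_r,b_s]=\tfrac{i\hbar}{\pi}\abracket{a,b}\delta_{r+s,0}$. This requires that $\phi$ (hence $\phi^*$) be compatible with $\abracket{-,-}$ in the appropriate graded sense—equivalently, that $\delta$ be a symplectic differential on $E^\bullet$, which is part of the hypothesis that $(-,-):E^\bullet\otimes E^\bullet\to K_\Sigma$ is a pairing \emph{of complexes}. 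Once that compatibility is recorded, the derivation property of $\delta$ on $A_{(0)}$ (the only product entering the Lie bracket after passing to residues, up to the binomial tail) follows by a short induction on the polynomial degree, and the rest is Koszul-sign tracking, which I would relegate to a remark.
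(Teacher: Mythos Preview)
Your proposal is correct and is in fact more than the paper provides: the paper's entire proof reads ``We leave this formal check to the interested reader.'' Your outline---verifying that $\delta$ descends to the quotient $\V[[\h^*]][z,z^{-1}]/\im\,\pa$ via $[\delta,T]=0$, then establishing the graded Leibniz rule by showing $\delta$ is a derivation of all $n$-th products and feeding this into Borcherds' commutator formula---is exactly the routine verification the paper has in mind. Your identification of the one substantive point, namely that the hypothesis that $(-,-):E^\bullet\otimes E^\bullet\to K_\Sigma$ is a pairing \emph{of complexes} is what guarantees $\delta$ respects the OPE, is correct and is the only place where an actual assumption is used rather than pure bookkeeping.
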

\begin{proof} We leave this formal check to the interested reader. 
\end{proof}

The main theorem in this paper is the following, which can be viewed as the  two dimensional chiral analogue of Theorem \ref{thm-weyl}. 
\begin{thm}\label{main-thm} Assume the set-up of $\beta\gamma-bc$ system in Section \ref{section-2d}. Let $I\in \V^+[[\h^*]][[\hbar]]$ of degree $1$ and $\hat I$ be the associated chiral local functional defined in Definition \ref{defn-functional}.  Let $W(-,-)$ be the homotopy RG flow operator defined in Definition \ref{defn-stable}. Let $P_\epsilon^L$ be the regularized propagator ($\epsilon, L>0$) defined in Section \ref{sec:regularization}.  Define
$$
   {\hat I[L]}= \lim_{\epsilon \to 0} W(P_\epsilon^L, \hat I)
$$
where the limit exists by Theorem \ref{thm-defn}. Let $\oint dz I=I_{(0)}$ be the corresponding mode of the vertex operator $I$ as an element of the modes Lie algebra $\oint \V[[\h]][[\hbar]]$.

Then the  family $\{\hat I[L]\}_{L>0}$ satisfies renormalized quantum master equation if and only if 
$$
  \delta \oint dz I+{1\over 2}\bracket{i\hbar \over \pi }^{-1} \bbracket{\oint dz I, \oint dz I}=0, 
$$
i.e.. $\oint dz I$ is a Maurer-Cartan element of $\oint \V[[\h]][[\hbar]]$. 
\end{thm}

\begin{rmk} As in Remark \ref{rmk:chiral-boson}, this Theorem also holds for chiral deformations of chiral bosons. We will not present a general theory for this, but illustrate how to  set things up and modify in practice by a concrete example in Section \ref{section-B} (Theorem \ref{thm-chiral-boson}).
\end{rmk}

\begin{proof} It  is enough to work with $\Sigma=\C$, since $\C^*, E_\tau$ are quotients by translations and our $\delta$ and $I$ are translation invariant. The quantization on $\C$ is translation invariant and will descent to quotient spaces by translations due to the locality of the obstruction for solving the renormalized quantum master equation \cite[Chapter 5, Section 13]{Kevin-book}. In the following we assume $\Sigma=\C$. 

Since the renormalized quantum master equations 
\begin{equation*}
   Q \hat I[L]+\hbar \Delta_L\hat I[L]+{1\over 2}\fbracket{\hat I[L], \hat I[L]}_L=0, \quad \text{or}\ \bracket{Q+\Delta_L}e^{\hat I[L]/\hbar}=0,  \tag{$\dagger$}
\end{equation*}
are equivalent for each $L$ under homotopy RG flow,  it is enough to analyze the limit $L\to 0$. 

By Theorem \ref{thm-defn},
\begin{align*}
  \bracket{Q+\hbar \Delta_L}  e^{\hat I[L]/\hbar}&=\lim\limits_{\epsilon\to  0}\bracket{Q+\hbar\Delta_L} \bracket{ e^{\hbar \pa_{P_\epsilon^L}}e^{\hat I/\hbar}}\\
  &=\lim\limits_{\epsilon\to  0} e^{\hbar \pa_{P_\epsilon^L}} \bracket{ \bracket{Q+\hbar\Delta_\epsilon}  e^{\hat I/\hbar}}\\
  &={1\over\hbar} \lim\limits_{\epsilon\to  0}e^{\hbar \pa_{P_\epsilon^L}} \bracket{Q \hat I+\hbar\Delta_\epsilon\hat I+{1\over 2}\fbracket{\hat I, \hat I}_\epsilon} e^{\hat I/\hbar}\\
  &={1\over\hbar} \lim\limits_{\epsilon\to  0}e^{\hbar \pa_{P_\epsilon^L}} \bracket{\delta \hat I+{1\over 2}\fbracket{\hat I, \hat I}_\epsilon} e^{\hat I/\hbar}.
  \end{align*}
Here we have observed
\begin{itemize}
\item $\Delta_\epsilon \hat I=0$,  since $\hat I$ is local and $K_\epsilon$ becomes zero when restricted to the diagonal (the factor $d\bar z_1-d\bar z_2$ vanishes when $z_1=z_2=z$). 
\item $\dbar \hat I=0$, since it contributes to a total derivative.
\end{itemize}
Therefore formally
$$
\hbar e^{-\hat I/\hbar}\lim\limits_{L\to  0} \bracket{Q+\hbar \Delta_L}  e^{\hat I[L]/\hbar}=  \delta \hat I +{1\over 2}\lim\limits_{L\to 0}\lim\limits_{\epsilon\to  0} e^{-\hat I/\hbar}e^{\hbar \pa_{P_\epsilon^L}} \bracket{\fbracket{\hat I, \hat I}_\epsilon e^{\hat I/\hbar}}.
$$
The first term matches with that in the theorem up to a factor of $i$ (recall Definition \ref{defn-functional}). It remains to analyze the second term involving effective BV bracket. We proceed in several steps. 

\noindent \emph{Step 1}:  Reduction to  two-vertex diagram. We claim
$$
\lim\limits_{L\to 0}\lim\limits_{\epsilon\to  0} e^{-\hat I/\hbar}e^{\hbar \pa_{P_\epsilon^L}} \bracket{\fbracket{\hat I, \hat I}_\epsilon e^{\hat I/\hbar}}=\lim\limits_{L\to 0}\lim\limits_{\epsilon \to 0} \sum_{m\geq 0}  W_{\Gamma_m}(\hat I, P_\epsilon^L, K_\epsilon).
$$
Here $W_{\Gamma_m}(\hat I, P_\epsilon^L, K_\epsilon)$ is the Feynman graph integral whose diagram contains 
\begin{itemize}
\item two vertices where we put $\hat I$,
\item $m$ propagators where we put $P_\epsilon^L$(draw by solid lines),
\item and one extra propagator by $K_\epsilon$ (draw by dashed line): 
\begin{figure}[h]\centering
\begin{tikzpicture}[scale=1.5]
\clip(-1.5,-1) rectangle (3.5,1);
\draw[thick](-.5,0)node[rotate=90]{$\cdots$}(2.5,0)node[rotate=90]{$\cdots$}
    (0,0)edge[bend left=45]node[below]{$P_\epsilon^L$}(2,0)edge[bend left=-45] node[below]{$P_\epsilon^L$}(2,0)
    edge[bend left=75,red,dashed]node[above]{$K_\epsilon$}(2,0);
\draw[thick](-2,1)to(0,0)to(-2,-1) (4,1)to(2,0)to(4,-1);
\draw[fill=cyan](0,0)circle(.2)(2,0)circle(.2) (0,-.2)node[below]{$\hat{I}$}(2,-.2)node[below]{$\hat{I}$}
    (1,-.1)node[rotate=90]{$\cdots$};
\end{tikzpicture}
\caption{Two-vertex diagram contributing to the quantum master equation}\label{}
\end{figure}
\end{itemize}

This says that only diagrams with two vertices contribute to the limit $L\to 0$. 

To see this, we first observe that the limit
$$
\lim_{\epsilon\to 0} W_{\Gamma_m}(\hat I, P_\epsilon^L, K_\epsilon)
$$
exists as a chiral local functional which does not depend on $L$. This follows from the special case of Prop \ref{deformed-bracket} when $k=E-1$. Let us denote it by 
$$
O_m=\lim_{\epsilon\to 0} W_{\Gamma_m}(\hat I, P_\epsilon^L, K_\epsilon)=\lim_{L\to 0}\lim_{\epsilon\to 0} W_{\Gamma_m}(\hat I, P_\epsilon^L, K_\epsilon)
$$
and
$$
  O=\sum_{m\geq 0} O_m. 
$$

Next, let us analyze the combinatorial structure of the expression
$$
e^{\hbar \pa_{P_\epsilon^L}} \bracket{\fbracket{\hat I, \hat I}_\epsilon e^{\hat I/\hbar}}.
$$
The analogue of the Feynman graph explanation of Definition \ref{defn-stable} leads to 
$$
e^{\hbar \pa_{P_\epsilon^L}} \bracket{\fbracket{\hat I, \hat I}_\epsilon e^{\hat I/\hbar}}=\bracket{ {1\over \hbar}\sum_{(\widehat\Gamma,e)\ \text{connected}}W_{\widehat\Gamma}(\hat I, P_\epsilon^L, K_\epsilon) } \exp\bracket{{1\over \hbar}\sum_{\Gamma\ \text{connected}}  {W_\Gamma(P_{\epsilon}^{L}, \hat I)}}
$$
where the summation $(\widehat\Gamma,e)$ is over all connected graphs with a choice of a distinguished edge $e$ of $\widehat \Gamma$. And $W_{\widehat\Gamma}(\hat I, P_\epsilon^L, K_\epsilon)$ is the graph integral where we put $\hat I$ on each vertex, put $K_\epsilon$ on the distinguished edge $e$, and put $P_\epsilon^L$ on all the other edges. Here is an example for illustration. 

\begin{figure}[H]
  \centering
  
\definecolor{ffqqqq}{rgb}{1,0,0}
\begin{tikzpicture}[line cap=round,line join=round,>=triangle 45,x=1cm,y=1cm]
\clip(-4.0606504120875195,-2.8022512214056894) rectangle (4.051180397509822,2.5413787700876207);
\draw [line width=2pt,color=ffqqqq] (-2,-1) circle (0.1947446234309578cm);
\draw [line width=2pt,color=ffqqqq] (2,-1) circle (0.20578207460771017cm);
\draw [line width=2pt] (-1.431245178178118,1.5081227544164757) circle (0.21438425731630847cm);
\draw [line width=2pt] (1,1.5) circle (0.2107502542246136cm);
\draw [line width=2pt] (-1.9531435322596409,-0.8109763512515901)-- (-1.4777516532668564,1.2988436165171482);
\draw [line width=2pt] (1.964243126974814,-0.7973483080225731)-- (1.057029813037287,1.2971126665346717);
\draw [line width=2pt] (-1.5713709886494462,1.6703736928569621)-- (-2.418235999117717,2.178968078023861);
\draw [line width=2pt] (1.1812741812232965,1.6074957714403812)-- (1.8767162451272617,2.1712572122352705);
\draw [shift={(-0.26289142209462607,0.2675013803966105)},line width=2pt]  plot[domain=0.892931216571804:2.2054591256527063,variable=\t]({1*1.7536054365465363*cos(\t r)+0*1.7536054365465363*sin(\t r)},{0*1.7536054365465363*cos(\t r)+1*1.7536054365465363*sin(\t r)});
\draw [shift={(-0.17927357735647928,3.2273143136844364)},line width=2pt]  plot[domain=4.183103749139873:5.21494258768717,variable=\t]({1*2.1435871944079565*cos(\t r)+0*2.1435871944079565*sin(\t r)},{0*2.1435871944079565*cos(\t r)+1*2.1435871944079565*sin(\t r)});
\draw [shift={(0.009591351350330693,-2.254361964456088)},line width=2pt,dotted,color=ffqqqq]  plot[domain=0.6743055943069916:2.503045640653656,variable=\t]({1*2.321155259962054*cos(\t r)+0*2.321155259962054*sin(\t r)},{0*2.321155259962054*cos(\t r)+1*2.321155259962054*sin(\t r)});
\draw [shift={(-0.04963988371590323,-5.362603267524424)},line width=2pt,color=ffqqqq]  plot[domain=1.1692136079381283:1.9521433055513626,variable=\t]({1*4.719415414834463*cos(\t r)+0*4.719415414834463*sin(\t r)},{0*4.719415414834463*cos(\t r)+1*4.719415414834463*sin(\t r)});
\draw [shift={(0.020498099537760568,3.2804726310908245)},line width=2pt,color=ffqqqq]  plot[domain=4.311093885970003:5.1043886925316295,variable=\t]({1*4.780516577598397*cos(\t r)+0*4.780516577598397*sin(\t r)},{0*4.780516577598397*cos(\t r)+1*4.780516577598397*sin(\t r)});
\draw [line width=2pt] (-2.1843589169998654,-0.9372524257209712)-- (-3.050526993782147,-0.5892327400801771);
\draw [line width=2pt] (-2.1395460813159826,-1.1358394623981511)-- (-3.011972664839194,-1.6610430846942754);
\draw [line width=2pt] (2.196106413386636,-0.9376418661401497)-- (3.0102135160500856,-0.9207999689895745);
\draw [line width=2pt] (2.107095814162949,-0.8242821898081238)-- (3.0102135160500856,-0.25766551117077985);
\draw [line width=2pt] (2.146596547727933,-1.144415076858703)-- (3,-1.5);
\draw (-1.6500407623727461,1.8253529701547325) node[anchor=north west] {$\mathbf{\hat I}$};
\draw (0.7865928268218924,1.8476421043661418) node[anchor=north west] {$\mathbf{\hat I}$};
\draw (-2.218355696517043,-0.6729675453486333) node[anchor=north west] {$\mathit{\mathbf{\hat I}}$};
\draw (1.7812945135500836,-0.6729675453486333) node[anchor=north west] {$\mathbf{\hat I}$};
\draw (-0.49051955197005814,2.65704175691648) node[anchor=north west] {$\mathbf{P}_{\epsilon}^L$};
\draw (-0.4750978203928769,1.7240269964970134) node[anchor=north west] {$\mathbf{P}_\epsilon^L$};
\draw (-0.3671456993526081,0.6753492492486871) node[anchor=north west] {$\mathbf{K}_\epsilon$};
\draw (-0.39027829671838,-0.37332849799963935) node[anchor=north west] {$\mathbf{P}_\epsilon^L$};
\draw (-0.37485656514119875,-1.329475855784878) node[anchor=north west] {$\mathbf{P}_\epsilon^L$};
\draw (0.3345430874091391,0.2049864361446583) node[anchor=north west] {$\mathbf{e}$};
\draw (-3.37485656514119875, 1.029475855784878) node[anchor=north west] {$\mathbf{\widehat\Gamma:}$};
\end{tikzpicture}

    \caption{An example of $(\hat \Gamma, e)$}
\end{figure}

$\widehat\Gamma$ contains a maximal subgraph of the type $\Gamma_m$ of two-vertex graph described above. It is the read part of the graph in the above example. Then Prop \ref{deformed-bracket} can be pictured by 

\begin{figure}[H]
  \centering
  
\definecolor{ffqqqq}{rgb}{1,0,0}
\begin{tikzpicture}[line cap=round,line join=round,>=triangle 45,x=1cm,y=1cm]
\clip(-4.0606504120875195,-2.8022512214056894) rectangle (12.051180397509822,2.5413787700876207);
\draw [line width=2pt,color=ffqqqq] (-2,-1) circle (0.1947446234309578cm);
\draw [line width=2pt,color=ffqqqq] (2,-1) circle (0.20578207460771017cm);
\draw [line width=2pt] (-1.431245178178118,1.5081227544164757) circle (0.21438425731630847cm);
\draw [line width=2pt] (1,1.5) circle (0.2107502542246136cm);
\draw [line width=2pt] (-1.9531435322596409,-0.8109763512515901)-- (-1.4777516532668564,1.2988436165171482);
\draw [line width=2pt] (1.964243126974814,-0.7973483080225731)-- (1.057029813037287,1.2971126665346717);
\draw [line width=2pt] (-1.5713709886494462,1.6703736928569621)-- (-2.418235999117717,2.178968078023861);
\draw [line width=2pt] (1.1812741812232965,1.6074957714403812)-- (1.8767162451272617,2.1712572122352705);
\draw [shift={(-0.26289142209462607,0.2675013803966105)},line width=2pt]  plot[domain=0.892931216571804:2.2054591256527063,variable=\t]({1*1.7536054365465363*cos(\t r)+0*1.7536054365465363*sin(\t r)},{0*1.7536054365465363*cos(\t r)+1*1.7536054365465363*sin(\t r)});
\draw [shift={(-0.17927357735647928,3.2273143136844364)},line width=2pt]  plot[domain=4.183103749139873:5.21494258768717,variable=\t]({1*2.1435871944079565*cos(\t r)+0*2.1435871944079565*sin(\t r)},{0*2.1435871944079565*cos(\t r)+1*2.1435871944079565*sin(\t r)});
\draw [shift={(0.009591351350330693,-2.254361964456088)},line width=2pt,dotted,color=ffqqqq]  plot[domain=0.6743055943069916:2.503045640653656,variable=\t]({1*2.321155259962054*cos(\t r)+0*2.321155259962054*sin(\t r)},{0*2.321155259962054*cos(\t r)+1*2.321155259962054*sin(\t r)});
\draw [shift={(-0.04963988371590323,-5.362603267524424)},line width=2pt,color=ffqqqq]  plot[domain=1.1692136079381283:1.9521433055513626,variable=\t]({1*4.719415414834463*cos(\t r)+0*4.719415414834463*sin(\t r)},{0*4.719415414834463*cos(\t r)+1*4.719415414834463*sin(\t r)});
\draw [shift={(0.020498099537760568,3.2804726310908245)},line width=2pt,color=ffqqqq]  plot[domain=4.311093885970003:5.1043886925316295,variable=\t]({1*4.780516577598397*cos(\t r)+0*4.780516577598397*sin(\t r)},{0*4.780516577598397*cos(\t r)+1*4.780516577598397*sin(\t r)});
\draw [line width=2pt] (-2.1843589169998654,-0.9372524257209712)-- (-3.050526993782147,-0.5892327400801771);
\draw [line width=2pt] (-2.1395460813159826,-1.1358394623981511)-- (-3.011972664839194,-1.6610430846942754);
\draw [line width=2pt] (2.196106413386636,-0.9376418661401497)-- (3.0102135160500856,-0.9207999689895745);
\draw [line width=2pt] (2.107095814162949,-0.8242821898081238)-- (3.0102135160500856,-0.25766551117077985);
\draw [line width=2pt] (2.146596547727933,-1.144415076858703)-- (3,-1.5);
\draw (-1.6500407623727461,1.8253529701547325) node[anchor=north west] {$\mathbf{\hat I}$};
\draw (0.7865928268218924,1.8476421043661418) node[anchor=north west] {$\mathbf{\hat I}$};
\draw (-2.218355696517043,-0.6729675453486333) node[anchor=north west] {$\mathit{\mathbf{\hat I}}$};
\draw (1.7812945135500836,-0.6729675453486333) node[anchor=north west] {$\mathbf{\hat I}$};
\draw (-0.49051955197005814,2.65704175691648) node[anchor=north west] {$\mathbf{P}_{\epsilon}^L$};
\draw (-0.4750978203928769,1.7240269964970134) node[anchor=north west] {$\mathbf{P}_\epsilon^L$};
\draw (-0.3671456993526081,0.6753492492486871) node[anchor=north west] {$\mathbf{K}_\epsilon$};
\draw (-0.39027829671838,-0.37332849799963935) node[anchor=north west] {$\mathbf{P}_\epsilon^L$};
\draw (-0.37485656514119875,-1.329475855784878) node[anchor=north west] {$\mathbf{P}_\epsilon^L$};
\draw (-2.49051955197005803,0.6570417569164775) node[anchor=north west] {$\mathbf{P}_\epsilon^L$};
\draw (1.49051955197005803,0.6570417569164775) node[anchor=north west] {$\mathbf{P}_\epsilon^L$};
\draw (0.3345430874091391,0.2049864361446583) node[anchor=north west] {$\mathbf{e}$};
\draw (-3.37485656514119875, 1.029475855784878) node[anchor=north west] {$\lim\limits_{\epsilon\to 0}$};
\draw (3.37485656514119875, 0.829475855784878) node[anchor=north west] {$=$};
\draw (4.37485656514119875, 1.029475855784878) node[anchor=north west] {$\lim\limits_{\epsilon\to 0}$};

\draw [line width=2pt] (6.731245178178118,1.5081227544164757) circle (0.21438425731630847cm);
\draw [line width=2pt] (9,1.5) circle (0.2107502542246136cm);
\draw [line width=2pt] (6.5713709886494462,1.6703736928569621)-- (5.818235999117717,2.178968078023861);
\draw [line width=2pt] (9.1812741812232965,1.6074957714403812)-- (9.8767162451272617,2.1712572122352705);
\draw [shift={(7.86289142209462607,0.3375013803966105)},line width=2pt]  plot[domain=0.892931216571804:2.2054591256527063,variable=\t]({1*1.7536054365465363*cos(\t r)+0*1.7536054365465363*sin(\t r)},{0*1.7536054365465363*cos(\t r)+1*1.7536054365465363*sin(\t r)});
\draw [shift={(7.87927357735647928,3.1273143136844364)},line width=2pt]  plot[domain=4.183103749139873:5.21494258768717,variable=\t]({1*2.1435871944079565*cos(\t r)+0*2.1435871944079565*sin(\t r)},{0*2.1435871944079565*cos(\t r)+1*2.1435871944079565*sin(\t r)});
\draw (6.5200407623727457,1.8553529701547299) node[anchor=north west] {$\mathbf{\hat I}$};
\draw (8.7691232162849798,1.8367986412117768) node[anchor=north west] {$\mathbf{\hat I}$};
\draw (7.49051955197005803,2.6570417569164775) node[anchor=north west] {$\mathbf{P}_\epsilon^L$};
\draw (7.4750978203928768,1.724026996497011) node[anchor=north west] {$\mathbf{P}_\epsilon^L$};
\draw (6.49051955197005803,0.6570417569164775) node[anchor=north west] {$\mathbf{P}_\epsilon^L$};
\draw (8.49051955197005803,0.6570417569164775) node[anchor=north west] {$\mathbf{P}_\epsilon^L$};
\draw [line width=2pt,color=ffqqqq] (7.86666318884925174,-1.0287520900298435) circle (0.21060688165965621cm);
\draw [line width=2pt] (6.7659060602749322,1.303938010969031)-- (7.71558474303534734,-0.8798305358437478);
\draw [line width=2pt] (8.9483881167144776,1.2956672244610257)-- (8,-0.8999940219913592);
\draw [line width=2pt] (7.6761521257595115,-1.0070808206943005)-- (6.7466669097552958,-0.33477416905668783);
\draw [line width=2pt] (7.61917162039589864,-1.1739982153123658)-- (6.7312451781781146,-1.7150191452144112);
\draw [line width=2pt] (8,-0.8999940219913592)-- (9.036231874170886,-0.21140031643923815);
\draw [line width=2pt] (8.040422768012579574,-1.067101341300555)-- (9.0902079346910205,-1.0210412242412548);
\draw [line width=2pt] (8.07579569198713987,-1.2187477652869885)-- (9.1596057267883362,-1.668753950482868);
\draw (7.55859137040965486,-0.7546544716573605) node[anchor=north west] {\color{blue}$\mathbf{O}$};
\end{tikzpicture}

\end{figure}
In other words, the limit $\lim\limits_{\epsilon\to 0}W_{\widehat\Gamma}(\hat I, P_\epsilon^L, K_\epsilon) $ is simply computed by the $\lim\limits_{\epsilon\to 0}$ of the new graph integral obtained by collapsing all the edges with the same ends as the edge $e$ in $\hat \Gamma$ and put the chiral local functional $O_m=\lim\limits_{\epsilon\to 0} W_{\Gamma_m}(\hat I, P_\epsilon^L, K_\epsilon)$ on the new vertex. We conclude that
$$
\lim\limits_{\epsilon\to 0}e^{\hbar \pa_{P_\epsilon^L}} \bracket{\fbracket{\hat I, \hat I}_\epsilon e^{\hat I/\hbar}}=\lim\limits_{\epsilon\to 0}e^{\hbar \pa_{P_\epsilon^L}} \bracket{ O e^{\hat I/\hbar}}.
$$
Note that the limit on the RHS exists, again by Prop \ref{finiteness lem}, since both $\hat I$ and $O$ contain only holomorphic derivatives. Furthermore, if we take the limit $L\to 0$, all graphs containing propagators will vanish. We find
$$
\lim\limits_{L\to 0}\lim\limits_{\epsilon\to 0}e^{\hbar \pa_{P_\epsilon^L}} \bracket{\fbracket{\hat I, \hat I}_\epsilon e^{\hat I/\hbar}}=\lim\limits_{L\to 0}\lim\limits_{\epsilon\to 0}e^{\hbar \pa_{P_\epsilon^L}} \bracket{ O e^{\hat I/\hbar}}= O e^{\hat I/\hbar}. 
$$
This establishes our claim for Step1
$$
\lim\limits_{L\to 0}\lim\limits_{\epsilon\to  0} e^{-\hat I/\hbar}e^{\hbar \pa_{P_\epsilon^L}} \bracket{\fbracket{\hat I, \hat I}_\epsilon e^{\hat I/\hbar}}=O=\lim\limits_{L\to 0}\lim\limits_{\epsilon \to 0} \sum_{m\geq 0}  W_{\Gamma_m}(\hat I, P_\epsilon^L, K_\epsilon).
$$

\noindent \emph{Step 2}: It remains to show 
\begin{equation*}
\lim\limits_{L\to 0}\lim\limits_{\epsilon \to 0} \sum_{m\geq 0}  W_{\Gamma_m}(\hat I, P(\epsilon, L), K_\epsilon)={\pi \over i\hbar }\widehat{\bbracket{\oint dz I, \oint dz I}}. 
\end{equation*}

First, the vertex operator $\bbracket{\oint dz I, \oint dz I}$ is computed by the residue form of Borcherds commutator formula as described in Section \ref{defn-Lie-algebra}. This formula has a nice interpretation in terms of Wick contractions using normal ordered product and OPE's (see \cite[Chapter 4]{CFT} and \cite[Section 3.3.10]{Frenkel}).  Explicitly, assume $I=\sum\pa^{k_1}a_1\cdots \pa^{k_n}a_n$ is expressed in terms of  holomorphic derivatives of fields. Then the commutator $\bbracket{\oint dz I, \oint dz I}$ is a sum 
$$
\bbracket{\oint dz I, \oint dz I}=\sum_{m\geq 0}  J_m
$$
where each $J_m$ consists of arbitrary $m$ contractions of fundamental fields between the two copies of $\oint dz I$, replacing them by the singular part of their OPE, and them compute the residue via the contour integral as described in Section \ref{defn-Lie-algebra}. Since the singular part of the OPE of fundamental fields in the current step-up is 
$$
   a(z_1)b(z_2)\sim \bracket{i\hbar \over \pi}{\abracket{a,b}\over (z_1-z_2)}, \quad \forall a, b\in \h,
$$
$J_m$ can be pictured by 

\begin{figure}[H]
  \centering
  
  \begin{tikzpicture}[line cap=round,line join=round,>=triangle 45,x=1cm,y=1cm]
\clip(-3.57457732623514,-2.3969562463674143) rectangle (3.607082280828758,2.3339278864227784);
\draw [line width=2pt] (-1.5,0) circle (0.2645585406392744cm);
\draw [line width=2pt] (1.5,0) circle (0.24513824307698515cm);
\draw [shift={(0.016134270485672796,-0.33277498143453027)},line width=2pt]  plot[domain=0.38074956022716805:2.7596584262403843,variable=\t]({1*1.465297314194159*cos(\t r)+0*1.465297314194159*sin(\t r)},{0*1.465297314194159*cos(\t r)+1*1.465297314194159*sin(\t r)});
\draw [shift={(-0.006053111950442549,-3.086281051241854)},line width=2pt]  plot[domain=1.1869435246198425:1.945259453185303,variable=\t]({1*3.379039242896354*cos(\t r)+0*3.379039242896354*sin(\t r)},{0*3.379039242896354*cos(\t r)+1*3.379039242896354*sin(\t r)});
\draw [shift={(0.02369899747873505,0.9620159414361169)},line width=2pt]  plot[domain=3.852106609217421:5.567681866933409,variable=\t]({1*1.7821683899607785*cos(\t r)+0*1.7821683899607785*sin(\t r)},{0*1.7821683899607785*cos(\t r)+1*1.7821683899607785*sin(\t r)});
\draw [line width=1.5pt,dotted] (0,0)-- (0,-0.6083680172317281);
\draw (-0.504227540594057362,1.9283805160486907) node[anchor=north west] {${i\hbar\over \pi(z_1-z_2)}$};
\draw (-0.504227540594057362,1.0136063835899811) node[anchor=north west] {${i\hbar\over \pi(z_1-z_2)}$};
\draw (-0.504227540594057362,-0.8677815771827054) node[anchor=north west] {${i\hbar\over \pi(z_1-z_2)}$};
\draw (-1.7334022097787656,0.30160593631831527) node[anchor=north west] {$I$};
\draw (1.2725470391363177,0.28256597210004854) node[anchor=north west] {$I$};
\draw [line width=2pt] (-1.7153963671371018,0.15360867960281893)-- (-2.734896592709608,0.7706198540866254);
\draw [line width=2pt] (-1.6896017884711545,-0.18450578103062848)-- (-2.7622032832307633,-0.6629813982740391);
\draw [line width=2pt] (1.702592296021131,0.1380185487960131)-- (2.5011613147219602,0.7023531277837366);
\draw [line width=2pt] (1.657168650785304,-0.18812435628912647)-- (2.6786548031094712,-0.7175947793163501);
\draw (-3.7334022097787656,0.30160593631831527) node[anchor=north west] {$J_m=$};

\end{tikzpicture}
   
  \caption{There are $m+1$ internal edges for Wick contractions. External edges are non-contracted fields}
\end{figure}

In other words, we can write $J_m$ as a sum of terms of the form (up to some normalization factor)
$$
{\pi\over (m+1)!} \int_{\C} d^2z_2 B(\ora{z_2}) \oint_{z_2} dz_1 A(\ora{z_1})\prod_{i=0}^m \bracket{\pa_{z_1}^{k_i} {i\over \pi  (z_1-z_2)}}
$$
where $A, B$ collects all those fields which are not contracted. The factor $(m+1)!$ is the symmetry factor of the graph since it has $m+1$ identical edges. The derivatives $\pa_{z_1}^{k_i}$ come from those in $I$.

Similarly, we can explicitly write $
\lim\limits_{\epsilon\to 0} W_{\Gamma_m}(\hat I, P_\epsilon^L, K_\epsilon)
$ as a sum of terms of the form (up to some normalization factor)
$$
\lim\limits_{\epsilon\to 0}{1\over (m+1)!}\sum_{\sigma \in S_{m+1}}\int_{\C} d^2 z_1 \int_{\C} d^2 z_2  A(\ora{z_1})B(\ora{z_2})\pa_{z_1}^{k_{\sigma(0)}}K_\epsilon(\ora{z_1}, \ora{z_2}) {1\over m!}\prod_{i=1}^m \pa_{z_1}^{k_{\sigma(i)}}P_\epsilon^L(\ora{z_1},\ora{z_2})
$$
where again $A, B$ collects the external inputs. $m!$ is the symmetry factor indicating $m$ identical edges where we put $P_\epsilon^L$. The average over permutations $\sigma$ of $\{0,1,\cdots, m\}$ is due to the fact that the holomorphic derivatives are distributed symmetrically because the local functional $\hat I$ (which contains only holomorphic derivatives) is a (graded) symmetric multi-linear map.

The Theorem follows by showing that the above two expressions are the same for all $m$. After a tedious match of the normalization factor, it is sufficient to prove the following identity 
\begin{equation*}
\tag{$\dagger$} \begin{split}
\lim\limits_{\epsilon\to 0}{1\over (m+1)!}\sum_{\sigma \in S_{m+1}}\int_{\C} d^2 z_1 \int_{\C} d^2 z_2  A(\ora{z_1})B(\ora{z_2})\pa_{z_1}^{k_{\sigma(0)}}K_\epsilon(\ora{z_1}, \ora{z_2}) {1\over m!}\prod_{i=1}^m \pa_{z_1}^{k_{\sigma(i)}}P_\epsilon^L(\ora{z_1},\ora{z_2}) \\
={\pi\over (m+1)!} \int_{\C} d^2z_2 B(\ora{z_2}) \oint_{z_2} dz_1 A(\ora{z_1})\prod_{i=0}^m \bracket{\pa_{z_1}^{k_i} {i\over \pi  (z_1-z_2)}}. 
\end{split} 
\end{equation*} 
for any $k_0, \cdots, k_m\in \Z^{\geq 0}$, and smooth functions $A, B$ with compact support. Here 
\begin{align*}
 &K_\epsilon(\ora{z_1}, \ora{z_2})= {i\over {2\pi \epsilon}}e^{-|z_1-z_2|^2/2\epsilon}\\
  &P_\epsilon^L(\ora{z_1},\ora{z_2})=(-2i)\int_\epsilon^L dt \pa_{z_1}h_t(\ora{z_1},\ora{z_2})=(-2i)\int_\epsilon^L {dt\over 2\pi t} {\bar z_2-\bar z_1\over 2t}e^{-|z_1-z_2|^2/2t}
\end{align*}
where we have used the same symbols but only keep factors of the BV kernel and the regularized propagator that are relevant in this computation. $A, B$ are test functions viewed as collecting all inputs on external edges of the two vertices.  $\oint_{z_2}dz_1$ means a loop integral of $z_1$ around $z_2$ normalized by 
$$
\oint_{z_2}{dz_1\over z_1-z_2}=1.
$$
Our notation $\oint_{z_2} dz_1 A(\ora{z_1})\prod\limits_{i=0}^m \bracket{\pa_{z_1}^{k_i} {1\over z_1-z_2}}$ means only picking up the holomorphic derivative of $A$ according to the pole  condition, i.e., 
$$
    \oint_{z_2} dz_1 A(\ora{z_1}){1\over (z_1-z_2)^{m+1}}:={1\over m!}\pa_{z_2}^m A(\ora{z_2}). 
$$

The required identity ($\dagger$) follows from Prop \ref{deformed-bracket}.  In fact, Prop \ref{deformed-bracket} implies
\begin{align*}
&\lim\limits_{\epsilon\to 0}\int_{\C} d^2 z_1 \int_{\C} d^2 z_2  A(\ora{z_1})B(\ora{z_2})\pa_{z_1}^{k_0}K_\epsilon(\ora{z_1}, \ora{z_2}) {1\over m!} \prod_{i=1}^m \pa_{z_1}^{k_i}P_\epsilon^L(\ora{z_1},\ora{z_2})\\
=&{i^{m+1}(-2)^m2^m C(k_0;k_1,\cdots,k_m)\over (4\pi)^m m!}\int d^2z A(\ora{z}) \bracket{\pa_{z}^{k_0+\sum\limits_{i=1}^m(k_i+1)} B(\ora{z})}\\
=&{i^{m+1}(-2)^m2^m C(k_0;k_1,\cdots,k_m)\over (4\pi)^m m!}\int d^2z B(\ora{z}) \bracket{(-\pa_{z})^{k_0+\sum\limits_{i=1}^m(k_i+1)} A(\ora{z})}
\\
=&{i^{m+1}(-1)^{m+1}C(k_0;k_1,\cdots,k_m) \bracket{k_0+\sum\limits_{i=1}^m (k_i+1)}!\over \pi^m m!}\int d^2z_2 B(\ora{z_2}) \bracket{ \oint_{z_2} dz_1{A(\ora{z_1})\over \prod_{i=0}^m(z_2-z_1)^{k_i+1}}}
\end{align*}
where 
$$
     C(k_0;k_1,\cdots,k_m)=\int_0^1\cdots \int_0^1 \prod\limits_{i=1}^{m}{du_i} {\prod\limits_{i=1}^m u_i^{k_i}\over \left(1+\sum\limits_{i=1}^m u_i\right)^{\sum\limits_{j=0}^m (k_j+1)}}.
$$
To simplify the coefficient, we use the identity
$$
 {n!\over a^{n+1}}=\int_0^\infty du u^{n} e^{-ua}
$$
to write 
\begin{align*}
&C(k_0;k_1,\cdots,k_m)\bracket{k_0+\sum\limits_{i=1}^m (k_i+1)}!\\
=&\int_0^1\cdots \int_0^1 \prod\limits_{i=1}^{m}{du_i} \prod\limits_{i=1}^m u_i^{k_i}\int_0^\infty du_0\ u_0^{k_0+\sum\limits_{i=1}^m (k_i+1)} e^{-u_0(1+\sum\limits_{i=1}^m u_i)}\\
\substack{u_i\to u_i u_0^{-1}\\ =}&\int_{0\leq u_i\leq u_0, 1\leq  i\leq m} \prod_{i=0}^m du_i  \prod_{i=0}^m\bracket{u_i^{k_i}e^{-u_i}}.
\end{align*}
Since we eventually need to arerage over the permutations of $\{0,1,\cdots,m\}$, we find
\begin{align*}
&{1\over (m+1)!}\sum_{\sigma\in S_{m+1}}C(k_{\sigma(0)};k_{\sigma(1)},\cdots,k_{\sigma(m)})\bracket{k_{\sigma(0)}+\sum\limits_{i=1}^m (k_{\sigma(i)}+1)}!\\
=&{1\over m+1} \int_0^\infty\prod_{i=0}^m du_i  \prod\limits_{i=0}^m\bracket{u_i^{k_i}e^{-u_i}}={\prod_{i=0}^m k_i!\over m+1}.
\end{align*}
It follows that
\begin{align*}
&\lim\limits_{\epsilon\to 0}\sum_{\sigma \in S_{k+1}}{1\over (m+1)!}\int_{\C} d^2 z_1 \int_{\C} d^2 z_2  A(\ora{z_1})B(\ora{z_2})\pa_{z_1}^{k_{\sigma(0)}}K_\epsilon(\ora{z_1}, \ora{z_2}) {1\over m!} \prod_{i=1}^m \pa_{z_1}^{k_{\sigma(i)}}P_\epsilon^L(\ora{z_1},\ora{z_2})\\
=& { i^{m+1}(-1)^{m+1}\prod\limits_{i=0}^m k_i!\over \pi^{m}(m+1)!}\int d^2 z_2 B(\ora{z_2})\oint_{z_2}dz_1 {A(\ora{z_1})\over \prod\limits_{i=0}^m (z_2-z_1)^{k_i+1}}\\
=&{\pi \over (m+1)!}\int d^2 z_2 B(\ora{z_2})\oint_{z_2}dz_1 A(\ora{z_2})\prod\limits_{i=0}^m \pa_{z_1}^{k_i}{i\over \pi  (z_1-z_2)}.
\end{align*}

This proves the Theorem. 

\noindent \emph{An another approach to Step 2}: The above proof of Step 2 relies on \ref{deformed-bracket}, which hides heavy analytic estimates established in \cite{Li-modular}. Here below we  give a different but more direct computation (though still need a different computation in  \cite{Li-modular}) in order to give reader a feeling on what is happening.  

Let us change coordinates by 
$$
  (z_1, z_2)  \to  (z=z_1-z_2, z_2), \quad z=re^{i\theta}.
$$
Let us first focus on the term when $\sigma$ is the trivial permutation.
\begin{align*}
&\int_{\C} d^2 z_1 \int_{\C} d^2 z_2  A(\ora{z_1})B(\ora{z_2})\pa_{z_1}^{k_0}K_\epsilon(\ora{z_1}, \ora{z_2}){1\over m!} \prod_{i=1}^m \pa_{z_1}^{k_i}P_\epsilon^L(\ora{z_1},\ora{z_2})\\
=&-{i^{m+1}(-2)^m\over  m!}\int_{\C} d^2 z_2 B(\ora{z_2}) \int_{\C} d^2 z  {A(\ora{z_2}+\ora{z})\over (-z)^{k_0+\cdots+k_m+m}} \\
 & \qquad \bracket{\int_\epsilon^L \prod_{i=1}^m{dt_i\over 2\pi t_i}}e^{-{r^2\over 2\epsilon}-\sum\limits_{i=1}^m{r^2\over 2 t_i}} {1\over 2\pi \epsilon}\bracket{ r^2\over 2\epsilon}^{k_0} \prod_{i=1}^k \bracket{r^2\over 2t_i}^{k_i+1} \\
=&-{i^{m+1}(-2)^m\over m!}\int_{\C} d^2 z_2 B(\ora{z_2})\\
&\quad\quad \int_0^\infty {r dr}  \bracket{\int_\epsilon^L \prod_{i=1}^m{dt_i\over 2\pi t_i}}e^{-{r^2\over 2\epsilon}-\sum\limits_{i=1}^m{r^2\over 2 t_i}} {1\over 2\pi \epsilon}\bracket{ r^2\over 2\epsilon}^{k_0} \prod_{i=1}^k \bracket{r^2\over 2t_i}^{k_i+1} \int_{|z|=r} d\theta  {A(\ora{z_2}+\ora{z})\over (-z)^{k_0+\cdots+k_m+m}}. 
\end{align*}
To compute $ \int_{|z|=r} d\theta  {A(\ora{z_2}+\ora{z})\over z^{k_0+\cdots+k_m+m}}$, we need to do Taylor expansion of $A(\ora{z_2}+\ora{z})$ around $z=0$ to get the Taylor series expansion
$$
    A(\ora{z_2}+\ora{z})\sim A(\ora{z_2})+\pa_{z_2}A(\ora{z_2})z+\bar\pa_{z_2}A(\ora{z_2})\bar z+\cdots. 
$$
However, as shown by \cite[Proposition B.2]{Li-modular}, only terms involving holomorphic derivatives will survive in the $\epsilon\to 0$ limit. Therefore the $\theta$ integral can be  replaced by 
$$
\int_{|z|=r} d\theta  {A(\ora{z_2}+\ora{z})\over (-z)^{k_0+\cdots+k_m+m}}\to 2\pi \oint_{z_2}{dz_1\over z_1-z_2} {A(\ora{z_1})\over (z_2-z_1)^{k_0+\cdots+k_m+m}},
$$
where the meaning of $\oint_{z_2} dz_1$ is explained above. In particular, its value does not depend on $r$. Therefore the $r$-integral can be evaluated as
\begin{align*}
 &\lim\limits_{\epsilon\to 0}\int_0^\infty {r dr}  \bracket{\int_\epsilon^L \prod_{i=1}^m{dt_i\over 2\pi t_i}}e^{-{r^2\over 2\epsilon}-\sum\limits_{i=1}^m{r^2\over 2 t_i}} {1\over 2\pi \epsilon}\bracket{ r^2\over 2\epsilon}^{k_0} \prod_{i=1}^k \bracket{r^2\over 2t_i}^{k_i+1}\\
 \substack{r^2=2 \epsilon u_0\\ =\joinrel=\joinrel=\joinrel=\joinrel\\ t_i=\epsilon  u_0/u_i }&
{1\over (2\pi)^{m+1}} \int_0^\infty du_0 \int_0^{u_0}\prod_{i=1}^k du_i e^{-\sum\limits_{i=0}^m u_i} \prod_{i=0}^m u_i^{k_i}\\
=&{1\over (2\pi)^{m+1}}\int_{0\leq u_i\leq u_0, 1\leq  i\leq m} \prod_{i=0}^m du_i  \prod_{i=0}^m\bracket{u_i^{k_i}e^{-u_i}}.
\end{align*}
Areraging over the permutations of $\{0,1,\cdots,m\}$,  this integral leads to 
$$
{1\over (2\pi)^{m+1}}{1\over (m+1)}\int_0^\infty\prod_{i=0}^m du_i  \prod\limits_{i=0}^m\bracket{u_i^{k_i}e^{-u_i}}={1\over (2\pi)^{m+1}}{\prod\limits_{i=0}^m k_i!\over (m+1)}. 
$$
It follows by combining the above computations (we arrive that the same final computation as in the above proof of Step 2) that 
\begin{align*}
&\lim\limits_{\epsilon\to 0}\sum_{\sigma \in S_{k+1}}{1\over m!}\int_{\C} d^2 z_1 \int_{\C} d^2 z_2  A(\ora{z_1})B(\ora{z_2})\pa_{z_1}^{k_{\sigma(0)}}h_\epsilon(\ora{z_1}, \ora{z_2}) \prod_{i=1}^m \pa_{z_1}^{k_{\sigma(i)}}P_\epsilon^L(\ora{z_1},\ora{z_2})\\
=&- { i^{m+1}(-2)^m\prod\limits_{i=0}^m k_i!\over (2\pi)^{m}(m+1)!}\int d^2 z_2 B(\ora{z_2})\oint_{z_2}dz_1 {A(\ora{z_1})\over \prod\limits_{i=0}^m (z_2-z_1)^{k_i+1}}\\
=&{\pi \over (m+1)!}\int d^2 z_2 B(\ora{z_2})\oint_{z_2}dz_1 A(\ora{z_2})\prod\limits_{i=0}^m \pa_{z_1}^{k_i}{i\over \pi  (z_1-z_2)}.
\end{align*}
\end{proof}

\subsection{Generating function and modularity} In this subsection we focus on the chiral deformed theory on the flat torus. We show that in the limit $L\to 0$, the Taylor components of the effective interaction $I[\infty]$ exhibit modularity properties with prescribed anti-holomorphic dependence. In Section \ref{section-B}, this will be applied to explain why the generating function of Gromov-Witten invariants on the elliptic curve are quasi-modular forms in terms of the mirror B-model. 

\subsubsection{Generating function}
We describe the generating function when $\Sigma=E_\tau=\C/(\Z+\Z\tau)$ is an elliptic curve.

Let $I\in \V[[\h^*]][[\hbar]]$ satisfy the Maurer-Cartan equation in Theorem \ref{main-thm}. $\hat I[L]$ be the associated family solving the renormalized quantum master equation
$$
    \bracket{Q+\hbar \Delta_L} e^{\hat I[L]/\hbar}=0. 
$$

\begin{defn} We define the generating function $\hat I\bbracket{E_\tau }=\sum_{g\geq 0}\hbar^g \hat I_g\bbracket{E_\tau } \in \OO(\h[\epsilon])[[\hbar]]$ as a formal function on $\h[\epsilon]$ where 
\begin{itemize}
\item $\epsilon$ is an odd element of degree $1$, representing the generator ${d\bar z\over \Im \tau} $ of harmonics $\mathbb H^{0,1}(E_\tau)$. 
\item Under the identification $H^\bullet(\E, \dbar)\iso \h\otimes \mathbb H^{0,\bullet}(E_\tau)\iso \h[\epsilon]$, $\hat I[E_\tau]$ is the restriction of $\hat I[\infty]$ to harmonic elements $\h\otimes \mathbb H^{0,\bullet}(E_\tau)$. 
\item Given $a_1,\cdots, a_m\in \h[\epsilon]$, we denote its $m$-th Taylor coeffeicient by 
$$
  \abracket{a_1, \cdots, a_m}_g={\pa \over \pa a_1}\cdots {\pa\over \pa a_m} \hat I_g[E_\tau](0). 
$$
\end{itemize}

\end{defn}

\begin{rmk} Elements of $\h\otimes \mathbb H^{0,\bullet}(E_\tau)$ are ofter called \emph{zero modes}. Our definition of $\hat I[E_\tau]$ is just the effective theory on zero modes following physics terminology. 
\end{rmk}

The space $\h[\epsilon]$ carries naturally a $(-1)$-symplectic structure. The nontrivial pairing is between $\h$ and $\h \epsilon$ where 
$$
     \omega(a, b\epsilon )= \abracket{a,b}, \quad a, b\in \h.  
$$
Let $\Delta$ denote the associated BV operator on $\OO(\h[\epsilon])$. It is not hard to see that $\Delta$ can be identified with $\Delta_\infty$. The differential $\delta$ also induces a differential on $\h[\epsilon]$. Here we only need to keep the  constant part of the differential operator in $\delta$, since ${\pa\over \pa z}$ will annihilate the harmonics $\mathbb H^{0,\bullet}(E_\tau)$. 

\begin{prop} The triple $(\OO(\h[\epsilon]), \delta, \Delta)$ is a differential BV algebra. The generating function $\hat I[\Sigma_\tau]$ satisfies the BV master equation 
$$
   \bracket{\delta+\hbar \Delta}e^{\hat I[E_\tau]/\hbar}=0. 
$$
\end{prop}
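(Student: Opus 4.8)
The plan is to deduce the finite-dimensional master equation on zero modes from the infinite-dimensional renormalized master equation of Theorem \ref{main-thm} by passing to the limit $L\to\infty$ and restricting to harmonic representatives. First I would verify the algebraic claim that $(\OO(\h[\epsilon]),\delta,\Delta)$ is a differential BV algebra: this follows from the toy-model construction of Section \ref{section-BV} applied to the $(-1)$-shifted symplectic (in fact Poisson, cf. Remark \ref{rmk-Poisson}) vector space $\h[\epsilon]$ with pairing $\omega(a,b\epsilon)=\abracket{a,b}$, together with the observation that $\delta$ preserves this pairing because $\delta$ is built from a $\phi\in\Hom(\h,\h)$ that is skew with respect to $\abracket{-,-}$ (this is exactly the condition that $(E^\bullet,(-,-))$ is a symplectic complex, unwound through the identification $K_\Sigma\cong\OO_\Sigma\,dz$). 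The compatibility $[\delta,\Delta]=0$ is then automatic, and on $\h[\epsilon]$ only the constant term of the differential operator in $\delta$ survives since $\pa_z$ kills $\mathbb H^{0,\bullet}(E_\tau)$, which is what makes the induced $\delta$ well-defined on the finite-dimensional space.

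Next I would identify $\Delta$ with $\Delta_\infty=\Delta_{K_\infty}$. The heat kernel $K_L$ on $E_\tau$ converges as $L\to\infty$ to the projection onto harmonics, i.e. $K_\infty$ is (up to the normalization in the BV set-up) the Casimir $C_\h$ tensored with the harmonic projector kernel; restricting $\OO(\E)$ to $\OO(\h\otimes\mathbb H^{0,\bullet}(E_\tau))=\OO(\h[\epsilon])$, the operator $\Delta_{K_\infty}$ acts by contraction with $C_\h\otimes(\text{harmonic data})$, which is precisely the BV operator attached to the $(-1)$-symplectic pairing on $\h[\epsilon]$ described above. The normalization constant, the factor $i$ from Definition \ref{defn-functional}, and the $d\bar z/\Im\tau$ normalization of $\epsilon$ are chosen so that these match on the nose; I would check the constants by testing $\Delta_\infty$ against a quadratic functional.

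Finally, the master equation: by Theorem \ref{main-thm} the family $\hat I[L]$ satisfies $(Q+\hbar\Delta_L)e^{\hat I[L]/\hbar}=0$ for every $L$, so in particular the limit $L\to\infty$ gives $(Q+\hbar\Delta_\infty)e^{\hat I[\infty]/\hbar}=0$ on $\OO(\E)[[\hbar]]$ (the limit exists by Theorem \ref{thm-defn} and the heat-kernel convergence). Now restrict to harmonics. On harmonic inputs $Q=\dbar+\delta$ reduces to $\delta$ (the $\dbar$-part annihilates harmonics, which is the whole point of passing to cohomology), $\Delta_\infty$ becomes $\Delta$ by the previous paragraph, and $\hat I[\infty]$ restricts to $\hat I[E_\tau]$ by definition; hence $(\delta+\hbar\Delta)e^{\hat I[E_\tau]/\hbar}=0$. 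The one point needing care — and the main obstacle — is that restriction to a subspace does not in general commute with the BV operator or with $e^{(-)/\hbar}$, so I must justify that restricting the equation $(Q+\hbar\Delta_\infty)e^{\hat I[\infty]/\hbar}=0$ to harmonics yields the equation for the restricted functional. This works because $K_\infty$ is supported entirely on the harmonic subspace (it is the kernel of the harmonic projector), so $\Delta_\infty$ maps $\OO(\E)$ into the sub-BV-algebra generated by harmonics and its action there depends only on the restriction of the argument; combined with the fact that $\delta$ and the harmonic projection commute (both being built from translation-invariant data and the Hodge decomposition being $\delta$-equivariant when $\delta$ has constant coefficients on zero modes), the restriction is a morphism of differential BV algebras, and applying it to the master equation gives the result.
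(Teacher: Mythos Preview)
Your proposal is correct and follows essentially the same approach as the paper: identify $\Delta$ with $\Delta_\infty$ via the observation that $K_\infty\in\Sym^2(\h[\epsilon])$, then restrict the renormalized QME at $L=\infty$ to harmonics. The paper's own proof is a two-sentence version of exactly this; you have simply been more explicit about why restriction to harmonics is a morphism of differential BV algebras (the point that $K_\infty$ lives in the harmonic subspace so $\Delta_\infty$ commutes with restriction, and that $\dbar$ and the non-constant part of $\delta$ kill harmonics), which the paper leaves implicit.
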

\begin{proof} We observe that the BV kernel $K_L$ lies in $\Sym^2(\h[\epsilon])$ when $L\to \infty$, which defines the BV operator $\Delta$ as that on $\h[\epsilon]$. The proposition is just the quantum master equation 
$$
  (Q+\hbar \Delta_\infty)e^{\hat I[\infty]/\hbar}=0
$$
restricted to harmonic subspaces $\h[\epsilon]$. 

\end{proof}

\subsubsection{Modularity}

Now we analyze the dependence of $\hat I[E_\tau]$ on the complex structure $\tau$. We consider the modular group $SL(2,\Z)$, which acts on the upper half plane  $\H$ by 
$$
  \tau \to \gamma\tau:= {A\tau+B\over C\tau+D}, \quad \text{for}\ \gamma\in \begin{pmatrix}A & B\\ C &D\end{pmatrix}\in SL(2,\Z), \quad \tau\in \H. 
$$
Recall that a function $f: \H\to \C$ is said to have modular weight $k$ under the modular transformation $SL(2,\Z)$ if
$$
   f(\gamma\ora{\tau})=(C\tau+D)^k f(\ora{\tau}), \quad \text{for}\ \gamma\in \begin{pmatrix}A & B\\ C &D\end{pmatrix}\in SL(2,\Z).
$$
\begin{defn} We extend the $SL(2,\Z)$ action to $\C^n\times \H$ by
$$
 \gamma: (z_1,\cdots, z_n, \tau)\to  (\gamma z_1,\cdots, \gamma z_n, \gamma \tau):=\bracket{{z_1\over C\tau+D}, \cdots, {z_n\over C\tau +D}, {A\tau+B\over C\tau+D}}, 
$$
for $\gamma\in \begin{pmatrix}A & B\\ C &D\end{pmatrix}\in SL(2,\Z)$.
\end{defn}

It is easy to see that this defines a group action, in other words, 
$$
   \gamma_1( \gamma_2 (z_1,\cdots, z_n,\tau))=(\gamma_1\gamma_2)(z_1,\cdots,z_n,\tau), \quad \gamma_1, \gamma_2\in SL(2,\Z). 
$$

\begin{defn}\label{defn-modular} A differential form $\Omega$ on $\C^n\times \H$ is said to have modular weight $k$ under the above $SL(2,\Z)$ action if 
$$
    \gamma^* \Omega= (C\tau+D)^k  \Omega. 
$$ 
\end{defn}
When $n=0$ and $\Omega$ being a $0$-form on $\H$, this reduces to the above modular function of weight $k$. 

Let $h_L$ be the heat kernel function on $E_\tau$ as before. Pulled back to the universal cover $\C$ of $E_\tau$, $h_L$ gives rise to a function $\tilde h_L$ on $\C\times \C  \times \H$ by
$$
  \tilde h_L(\ora{z_1}, \ora{z_2};\ora{\tau})={1\over 2\pi L}\sum_{\lambda\in \Lambda_\tau}e^{-|z_1-z_2+\lambda|^2/2L}, \quad \Lambda_\tau=\Z\oplus \Z \tau. 
$$
$SL(2,\Z)$ transforms the lattice $\Lambda$ by 
$$
   \Lambda_{\gamma \tau}={1\over  C\tau+D}\Lambda_\tau, \quad  \gamma\in \begin{pmatrix}A & B\\ C &D\end{pmatrix}\in SL(2,\Z).
$$
It follows  that the heat kernel $\tilde h_L$ transforms under $SL(2,\Z)$ as
$$
  \tilde h_L(\gamma \ora{z_1}, \gamma \ora{z_2};\gamma \tau)=\abs{C\tau+D}^2 h_{\abs{C\tau+D}^2L}(\ora{z_1}, \ora{z_2};\ora{\tau}), \quad \forall \gamma \in SL(2,\Z). 
$$

The regularized BV kernel $K_L$  and propagator $P_\epsilon^L$ are 
$$
  K_L(\ora{z_1}, \ora{z_2};\ora{\tau})=i\ h_L(\ora{z_1},\ora{z_2};\ora{\tau})\bracket{d\bar z_1\otimes 1-1\otimes d\bar z_2}   C_{\h}
$$
and
$$
   P_\epsilon^L(\ora{z_1}, \ora{z_2};\ora{\tau})=-2i \int_{\epsilon}^L du \pa_{z_1}h_{u}(\ora{z_1}, \ora{z_2};\ora{\tau}) C_{\h}. 
$$
We define similarly $\tilde K_L, \tilde P_\epsilon^L$ as $\tilde h_L$ on the universal cover $\C$ of $E_\tau$. The following lemma is straight-forward. 
\begin{lem}\label{lem-modular} We have  the  following modular transformation properties of the  regularized propagators  and effective BV kernel
$$
   \gamma^*\tilde P_\epsilon^L=(C\tau+D)\tilde P_{\abs{C\tau+D}^2  \epsilon}^{\abs{C\tau+D}^2 L}, \quad \gamma^* \tilde K_L=(C\tau+D)\tilde K_{|C\tau+D|^2L}. 
$$
In particular, $\tilde P_0^\infty,  \tilde K_\infty$ have weight $1$ in the sense of Definition \ref{defn-modular}.  More generally, the $k$-th holomorphic derivative $\pa_{z_1}^k \tilde P_0^\infty(\ora{z_1},\ora{z_2};\ora{\tau})$ has modular weight $k+1$. 
\end{lem}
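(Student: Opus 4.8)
The plan is to reduce every assertion of the lemma to the scalar heat-kernel transformation recorded just above it,
\[
   \tilde h_L(\gamma\ora{z_1}, \gamma\ora{z_2}; \gamma\tau) = \abs{C\tau+D}^2\, \tilde h_{\abs{C\tau+D}^2 L}(\ora{z_1}, \ora{z_2}; \ora{\tau}),
\]
together with two elementary manipulations: the chain rule for the Wirtinger derivative $\pa_{z_1}$ under the holomorphic rescaling $z_1\mapsto z_1/(C\tau+D)$, and the substitution $v=\abs{C\tau+D}^2 u$ in the Schwinger-time integral defining the propagator. Throughout write $\mu=C\tau+D$, which is holomorphic in $\tau$ (and $\bar\mu=C\bar\tau+D$ since $C,D\in\Z$); hence $\pa_{\bar z_1}(z_1/\mu)=0$ and $\pa_{z_1}(\bar z_1/\bar\mu)=0$, so the chain rule produces no cross terms and $\pa_{z_1}\bigl(F(z_1/\mu,z_2/\mu;\gamma\tau)\bigr)=\mu^{-1}(\pa_{w_1}F)(z_1/\mu,z_2/\mu;\gamma\tau)$ for any $F$, where $\pa_{w_1}F$ denotes the partial derivative of $F$ in its first slot.

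First I would treat the propagator. Applying $\pa_{z_1}$ to the heat-kernel identity and using the chain rule gives $(\pa_{w_1}\tilde h_L)(\gamma\ora{z_1},\gamma\ora{z_2};\gamma\tau)=\mu\,\abs{\mu}^2\,\pa_{z_1}\tilde h_{\abs{\mu}^2L}(\ora{z_1},\ora{z_2};\ora{\tau})$. Substituting this into $\tilde P_\epsilon^L=-2i\int_\epsilon^L\pa_{z_1}\tilde h_u\,du\,C_\h$ and changing variables $v=\abs{\mu}^2u$ (so $dv=\abs{\mu}^2\,du$ and the limits become $\abs{\mu}^2\epsilon$, $\abs{\mu}^2L$) yields
\[
   \gamma^*\tilde P_\epsilon^L=-2i\,\mu\int_{\abs{\mu}^2\epsilon}^{\abs{\mu}^2L}\pa_{z_1}\tilde h_v(\ora{z_1},\ora{z_2};\ora{\tau})\,dv\,C_\h=(C\tau+D)\,\tilde P_{\abs{C\tau+D}^2\epsilon}^{\abs{C\tau+D}^2L},
\]
which is the first claimed identity.

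Next comes the BV kernel. Here $\tilde K_L=i\,\tilde h_L(d\bar z_1\otimes 1-1\otimes d\bar z_2)C_\h$ carries the fiberwise antiholomorphic one-forms $d\bar z_1,d\bar z_2$; pulling these back along $z_j\mapsto z_j/\mu$ at fixed modulus multiplies each by $\bar\mu^{-1}$. Combining this with $\gamma^*\tilde h_L=\abs{\mu}^2\tilde h_{\abs{\mu}^2L}$ and $\abs{\mu}^2/\bar\mu=\mu$ gives
\[
   \gamma^*\tilde K_L=i\,\abs{\mu}^2\tilde h_{\abs{\mu}^2L}\cdot\bar\mu^{-1}\bigl(d\bar z_1\otimes 1-1\otimes d\bar z_2\bigr)C_\h=(C\tau+D)\,\tilde K_{\abs{C\tau+D}^2L}.
\]
If one insists on viewing $\tilde K_L$ literally as a form on $\C\times\C\times\H$ rather than as a relative form along $\C\times\C\to\H$, then the honest pullback of $d\bar z_j$ also produces a term proportional to $\bar z_j\,C\,d\bar\tau$; but the kernels enter the quantization only through fiber integrals over $E_\tau$ at fixed $\tau$, so that component is immaterial, and the stated weight is exactly the weight of the relative form, which is all that Theorem~\ref{thm-modularity} uses.

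Finally, letting $\epsilon\to 0$, $L\to\infty$ and noting $\abs{\mu}^2\cdot 0=0$, $\abs{\mu}^2\cdot\infty=\infty$, the two displays specialize to $\gamma^*\tilde P_0^\infty=(C\tau+D)\tilde P_0^\infty$ and $\gamma^*\tilde K_\infty=(C\tau+D)\tilde K_\infty$, i.e.\ both have modular weight $1$ in the sense of Definition~\ref{defn-modular}. For the last assertion, apply $\pa_{z_1}^k$ to the weight-one identity $\tilde P_0^\infty(\gamma\ora{z_1},\gamma\ora{z_2};\gamma\tau)=(C\tau+D)\,\tilde P_0^\infty(\ora{z_1},\ora{z_2};\ora{\tau})$; the chain rule extracts a factor $\mu^{-k}$ on the left, so $(\pa_{w_1}^k\tilde P_0^\infty)(\gamma\ora{z_1},\gamma\ora{z_2};\gamma\tau)=(C\tau+D)^{k+1}\pa_{z_1}^k\tilde P_0^\infty(\ora{z_1},\ora{z_2};\ora{\tau})$, i.e.\ modular weight $k+1$. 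No step is genuinely hard; the one point that requires care is keeping the Wirtinger chain rule and the relative-form convention for $\tilde K_L$ straight, and using $\bar\mu=C\bar\tau+D$ so that conjugation commutes cleanly with the $SL(2,\Z)$-action.
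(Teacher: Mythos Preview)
Your proof is correct and is precisely the computation the paper has in mind; the paper gives no details, simply declaring the lemma ``straight-forward,'' and your argument fills in exactly the routine manipulations (chain rule for $\pa_{z_1}$ under $z\mapsto z/\mu$, the substitution $v=|\mu|^2u$ in the $u$-integral, and the pullback of $d\bar z_j$) that justify it. Your remark distinguishing the relative-form interpretation of $\tilde K_L$ from the literal pullback on $\C\times\C\times\H$ is a useful clarification not made explicit in the paper.
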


\begin{rmk}
$ K_\infty(\ora{z_1}, \ora{z_2};\ora{\tau})={d\bar z_1\otimes 1-1\otimes d\bar z_2\over 2\Im \tau} C_\h$. 
\end{rmk} 

\begin{defn}\label{defn-modular-quantization} A quantization $\hat I[L]$ defined by $I=\sum\limits_{g\geq 0}I_g\hbar^g \in \oint \V[[\h^*]][[\hbar]]$ in Theorem \ref{thm-defn} is called \emph{modular invariant} if each $I_g$ contains exactly $g$ holomorphic derivatives. 

\end{defn}

\begin{rmk}
This definition may vary according to the conformal weight of the pairing $\abracket{-,-}$ on $\h^*$. See Remark \ref{rmk-modular} for a specific example. 
\end{rmk}

\begin{thm}\label{thm-modularity} Let $\hat I[E_\tau]=\sum\limits_{g\geq 0}\hat I_g[E_\tau] \hbar^g$ be the generating function of a modular invariant quantization, $\hat I_g[E_\tau]\in \OO(\h[\epsilon])$. Then for any $a_1, \cdots, a_k\in \h, b_1,\cdots,b_m \in \h\epsilon$, the Taylor coefficient of $\hat I_g[E_\tau]$
$$
\abracket{a_1,\cdots,a_k,b_1,\cdots,b_m}_g
$$
is modular of weight $m+g-1$ as a function on $\H$. Moreover, It has the following expansion 
$$
\abracket{a_1,\cdots,a_k,b_1,\cdots,b_m}_g=\sum_{i=0}^N {f_i(\tau)\over (\Im \tau)^i}
$$
where $f_i(\tau)$'s are holomorphic functions in $\tau$ and $N<\infty$ is an integer. 
\end{thm}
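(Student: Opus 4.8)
The plan is to compute $\abracket{a_1,\dots,a_k,b_1,\dots,b_m}_g$ from the Feynman graph expansion of $\hat I[E_\tau]$, track the behaviour of each factor under $SL(2,\Z)$ to obtain the weight, and then isolate the $\Im\tau$-dependence using the explicit form of the propagator.

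First I would write $\hat I[\infty]=W(P_0^\infty,\hat I)=\sum_{\Gamma}\tfrac{1}{|\op{Aut}\Gamma|}W_\Gamma$ as a sum over connected graphs, with internal edges carrying $\tilde P_0^\infty$ and vertices labelled by the components $\hat I_{g_v}$ of $\hat I$ (each of valence $\geq 3$ modulo $\hbar$, since $\hat I\in\V^+$); restricting to harmonic inputs as in the definition of $\hat I[E_\tau]$ feeds the constants $a_i\in\h$ into $k$ external legs and the classes $b_j\,\tfrac{d\bar z}{\Im\tau}$ into $m$ external legs. For fixed $g$ and fixed $k+m$ only finitely many $\Gamma$ occur: $g=\ell(\Gamma)+\sum_v g_v$ with $\ell(\Gamma)=P-V+1$ bounds the number of edges $P$ and vertices $V$. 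Two elementary facts drive the argument. Each vertex contributes $i\int_{E_\tau}dz_v\,I_{g_v}(\cdot)$, which vanishes unless exactly one form of type $(0,1)$ reaches it; since $\tilde P_0^\infty$ and the $a_i$ are of type $(0,0)$ while each $b_j$ is of type $(0,1)$, the nonvanishing graphs have exactly one $b_j$-leg per vertex, so $V=m$. And modular invariance (Definition \ref{defn-modular-quantization}) makes $\hat I_{g_v}$ carry exactly $g_v$ holomorphic derivatives $\partial_z$; as $\partial_z$ annihilates the constants $a_i$ and the forms $b_j\,\tfrac{d\bar z}{\Im\tau}$, all of them land on propagators, so $\sum_e k_e=\sum_v g_v$, where $k_e$ is the total $\partial_z$-order carried by edge $e$.

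Next I would prove the weight statement. For $\gamma\in SL(2,\Z)$ with bottom row $(C,D)$, write $J=C\tau+D$, and in the graph integral computing $\abracket{\cdots}_g(\gamma\tau)$ substitute $z_v=w_v/J$ with $w_v\in E_\tau$, using $\Lambda_{\gamma\tau}=J^{-1}\Lambda_\tau$. Then each vertex factor $\tfrac{i\,b_j}{\Im(\gamma\tau)}\,dz_v\wedge d\bar z_v=\tfrac{i\,b_j}{\Im\tau}\,dw_v\wedge d\bar w_v$ is invariant (because $dz_v\wedge d\bar z_v=|J|^{-2}\,dw_v\wedge d\bar w_v$ and $\Im(\gamma\tau)=|J|^{-2}\Im\tau$), the constants $a_i$ are unchanged, and each edge factor $\partial_z^{k_e}\tilde P_0^\infty$ picks up $J^{k_e+1}$ by Lemma \ref{lem-modular}. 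Hence $\abracket{\cdots}_g(\gamma\tau)=J^{\sum_e(k_e+1)}\abracket{\cdots}_g(\tau)$, and $\sum_e(k_e+1)=\sum_v g_v+P=(g-\ell(\Gamma))+(\ell(\Gamma)+V-1)=g+m-1$, which is the same for every contributing $\Gamma$; this is modular weight $m+g-1$.

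Finally, for the $\Im\tau$-expansion I would use the explicit propagator on $E_\tau$, obtained by summing the flat heat kernel over $\Lambda_\tau$ and integrating in $t$,
$$
\tilde P_0^\infty(\ora{z_1},\ora{z_2};\ora\tau)=\frac{i}{\pi}\,\partial_z\log\theta_1(z_1-z_2\,|\,\tau)\,C_\h+\frac{i}{\Im\tau}\,\bigl((z_1-z_2)-(\bar z_1-\bar z_2)\bigr)\,C_\h ,
$$
whose holomorphic $z$-derivatives of order $\geq 2$ are holomorphic in $z_1-z_2$ and in $\tau$, and of order $\leq 1$ are such a holomorphic function plus a single term $(\Im\tau)^{-1}\cdot(\text{affine in }z_1-z_2,\bar z_1-\bar z_2)$. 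Expanding every edge factor accordingly turns $\abracket{\cdots}_g$ into a finite sum of terms $(\Im\tau)^{-p}\int_{E_\tau^{V}}\bigl(\prod_v d^2w_v\bigr)\,G(\{w_v\})\prod_{e\in S}(\bar w_{e_+}-\bar w_{e_-})$ with $0\leq|S|\leq p\leq P$ and $G$ a product of derivatives of $\log\theta_1$ in the edge-differences, hence quasi-elliptic. Changing variables to the edge-differences and integrating iteratively over the torus, using quasi-periodicity and the fact that the propagator has no harmonic component (so $\int_{E_\tau}\tilde P_0^\infty\,d^2z=0$), one sees that each such integral equals a polynomial in $(\Im\tau)^{-1}$, of degree bounded in terms of $P$ and $V$, whose coefficients are holomorphic on $\H$: the only non-holomorphic ingredients are the explicit $\Im\tau$-factors and the antiholomorphic monomials $\prod_{e\in S}(\bar w_{e_+}-\bar w_{e_-})$, and integrating the latter against (quasi-)elliptic functions in the $w_v$ produces only further explicit powers of $\Im\tau$. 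This yields $\abracket{a_1,\dots,a_k,b_1,\dots,b_m}_g=\sum_{i=0}^N f_i(\tau)/(\Im\tau)^i$ with $f_i$ holomorphic and $N<\infty$. I expect this last step to be the main obstacle: making rigorous that the torus Feynman integrals of products of $\tilde P_0^\infty$ and its derivatives lie in the ring of almost-holomorphic modular forms with a controlled $(\Im\tau)^{-1}$-degree, which is the analytic core of the statement and is essentially the mechanism of \cite{Li-modular}; Steps 1--2 are purely combinatorial once Lemma \ref{lem-modular} is in hand.
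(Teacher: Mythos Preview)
Your proposal is correct and follows essentially the same route as the paper: you expand $\hat I[\infty]$ into connected Feynman graphs, observe by the $(0,1)$-form count that a nonvanishing graph has exactly $m$ vertices (one per $b_j$), use the modular-invariance condition to force $\sum_e k_e=\sum_v g_v$, and then read off the weight $\sum_e(k_e+1)=g+m-1$ from Lemma \ref{lem-modular} together with the invariance of $d^2z/\Im\tau$; this matches the paper's argument line by line. For the $\Im\tau$-expansion you go a bit further than the paper, which simply invokes \cite[Proposition 5.1]{Li-modular} without writing out the $\theta_1$-splitting of the propagator---your sketch of that mechanism is accurate in spirit, and you are right that making it rigorous is exactly the content of that reference.
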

\begin{proof} Let us write 
$$
   \hat I[\infty]=\sum_{\Gamma\ \text{connected}}{W(\Gamma, I)}.
$$
Let $\Gamma$ be a Feynman graph which contributes to $\abracket{a_1,\cdots,a_k,b_1,\cdots,b_m}_g$. Due to the type of differential forms used (the propagator only contains $0$-forms on $E_\tau$), $\Gamma$ contains $m$ vertices $\{v_j\}_{j=1}^m$ each of which has an external input given by $b_j, j=1,\cdots m$. 
\begin{figure}[h]\centering
\begin{tikzpicture}[scale=1.5]
\draw[thick](-90:1)node[above]{$\cdots$};
\draw(210:1)to node[left]{$P_\epsilon^L$} (90:1)to node[right]{$P_\epsilon^L$}(-30:1);
\draw(90:1) to (210:1) (90+120*1-10:2)node{$b_m$}(90+120*2+10:2)node{$b_2$}(90-10:2)node{$b_1$};
\draw (220:1) node[right]{$\hat{I}_{g_2}$} (-40:1) node[left]{$\hat{I}_{g_2}$}(80:1) node[right]{$\hat{I}_{g_1}$};
\foreach \j in {1,2,3}{
\draw[thick](90+120*\j+10:1.8) to (90+120*\j:1)to (90+120*\j-10:1.8);
\draw[thick,fill=cyan] (90+120*\j:1)circle(.2); }
\end{tikzpicture}
\caption{}\label{}
\end{figure}

Assume the vertex $v_j$ has genus $g_j$, i.e., given by the local functional $I_{g_j}$. Let $E$ be the set of propagators in $\Gamma$. Then 
$$
   m+g=\sum_{j=1}^m g_j+|E|+1. 
$$
The graph integral $W(\Gamma, I)$ can be written as
$$
  W(\Gamma, I)=\lim_{\substack{\epsilon\to 0\\ L\to \infty}} A \prod_{i=1}^m \int_{E_\tau}{d^2z_i\over \Im \tau} {\prod_{e\in E}\pa_{z_{h(e)}}^{n_e}P_\epsilon^L(\ora{z_{h(e)}},\ora{z_{t(e)}};\ora{\tau})} 
$$
where $A$ is a combinatorial coefficient not depending on $\tau$. 
$$
h,t: E\to \{1,\cdots, m\}
$$ 
denote the head and the tale of the edge, where we have chosen an arbitrary orientation on edges. $n_e$ denotes the number of holomorphic derivatives applied to propagator at the edge $e$. Since all the external inputs $a_i, b_j$'s are harmonics, all holomorphic derivatives in the vertex $I_{g_j}$ will go to the propagators. By the modular invariance of the quantization, $I_{g_j}$ contains exactly $g_j$ holomorphic derivatives, hence 
$$
  \sum_{e\in E} n_e= \sum_{j=1}^m g_j. 
$$
It follows from the modular property of the measure ${d^2z\over \Im \tau}$ and the propagator $\hat P_0^\infty$ that $W(\Gamma, I)$ is a modular function on $\H$ of weight 
$$
  \sum_{e\in E}(n_e+1)=|E|+\sum_{j=1}^m g_j=m+g-1. 
$$
The polynomial dependence on ${1\over \Im \tau}$ follows from \cite[Proposition 5.1]{Li-modular}.
\end{proof}

Given a function $f$ on $\H$ of the form $
f=\sum\limits_{i=0}^N {f_i(\tau)\over (\Im \tau)^i}
$, 
we denote
$$
\lim_{\bar \tau\to \infty}f:= f_0(\tau). 
$$

It is shown in \cite{almost-modular-form} that $f$ is determined by the leading term $f_0$ and modular property. In particular, the operation $\lim\limits_{\bar\tau\to \infty}$ identifies the space of almost holomorphic modular forms with the space of quasi-modular forms \cite{almost-modular-form}. In general, the $\bar \tau\to \infty$ limit of the generating function $\hat I[E_\tau]$ will be reduced to certain characters on vertex algebras. This is argued in \cite{Dijkgraaf-chiral} by the method of contact terms, and realized in \cite{L-elliptic} by a method of cohomological localization via the quantum master equation. This pheonomenon will be systematically studied in \cite{Si-Jie}.

\subsection{Example: Poisson $\sigma$-model} We illustrate the application of Theorem \ref{main-thm} by the example of the AKSZ formalism of Poisson $\sigma$-model as described in \cite{Poisson-sigma-model}. 

Let $V=\R^{n}$ and $P$ be a Poisson bi-vector field on $V$.  Let $\Sigma$ be a flat surface as before. We consider the BV formalism of Poisson sigma model in the formal neighborhood of constant maps from $\Sigma$ to the origin of $V$. The space of fields is
$$
  \E= \Omega^\bullet(\Sigma)\otimes (V\oplus V^*[1]). 
$$
The differential on $\E$ is the de Rham differential $d_{\Sigma}$ on $\Omega^\bullet(\Sigma)$. The $(-1)$-shifted symplectic pairing is 
$$
  \omega(\varphi, \eta):=\int_{\Sigma} (\varphi, \eta), \quad \text{where}\ \varphi \in \Omega^\bullet(\Sigma)\otimes V, \eta\in \Omega^{\bullet}(\Sigma)\otimes V^*[1]. 
$$

Let us choose linear coordinates $x^i$ on $V$ and 
$
P=\sum\limits_{i,j}P^{ij}(x)\pa_{x^i}\wedge \pa_{x^j}.
$
The above fields $\varphi, \eta$ in coordinate components are  
$$
    \varphi=\{\varphi^i\}_{i=1}^n, \quad \eta=\{\eta_i\}_{i=1}^n, \quad \text{where}\ \varphi^i\in \Omega^\bullet(\Sigma), \eta_i \in \Omega^\bullet(\Sigma)[1]. 
$$
The action functional is given by \cite{Poisson-sigma-model}
$$
  S=\sum_{i}\int_{\Sigma} \eta_i d_{\Sigma}\varphi^i +\sum_{i,j} \int_{\Sigma} P^{ij}(\varphi) \eta_i \eta_j. 
$$

Let us split the differential $d_{\Sigma}=Q+\delta$, where $Q=\dbar$ and $\delta=\pa$ are the $(0,1)$-differential and $(1,0)$-differential on $\Sigma$. Then the above theory falls into the setting of Section \ref{section-2d}, and we can apply Theorem \ref{main-thm} to study its quantization via chiral deformations. In terms of notations in Section \ref{section-2d}, we have
$$
   \h=V[dz]\oplus V^*[dz][1], 
$$
where $V[dz]=V\otimes \C[dz], V^*[dz]=V^*\otimes \C[dz]$. The relevant vertex algebra is generated by $\varphi^i, \eta_i$: $\varphi^i$ represent components in $V[dz]$ and $\eta_i$ represent components in $V^*[dz][1]$. The nontrivial OPEs are given by 
$$
     \varphi^i(z) \eta_j(w)\sim \delta_{i,j}\bracket{i\hbar \over \pi}{dz-dw\over z-w}.
$$ 
Here it is understood that we have to match the corresponding components in $dz, dw$ in the above formula. For example, if we write $\varphi^i(z)=\varphi^i_0(z)+ \varphi^i_1(z) dz$ and $\eta_i(z)=\eta_{i0}(z)+\eta_{i1}(z)dz$, then matching the $dw$ component we find
$$
   \varphi^i_0(z) \eta_{j1}(w)\sim \delta_{i,j}\bracket{i\hbar \over \pi}{-1\over z-w}.
$$

The classical interaction is represented by the vertex operator 
$$
\oint I, \quad \text{where}\ I=\sum_{i,j} P^{ij}(\varphi) \eta_i \eta_j.
$$
Here it is understood (due to the type of differential forms used) that only the $dz$-component of $I$ contributes to $\oint I$ when we expand the fields $\varphi^{i}, \eta_i$ into forms in $z$. 

The classical master equation is satisfied because it satisfies two independent equations 
$$
  \delta \oint I=0, \quad \fbracket{\oint I, \oint I}=0.
$$
The first term vanishes since it produces a total derivative while the vanishing of the second term follows from Jacobi identity for $P$ \cite{Poisson-sigma-model}. 
\begin{prop} The classical interaction $\oint I$ satisfies the quantum master equation of Theorem \ref{main-thm}. 
\end{prop}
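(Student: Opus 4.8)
The plan is to deduce this from Theorem~\ref{main-thm}: it suffices to check that $\oint dz\,I$ is a Maurer--Cartan element of $\oint\V[[\h^*]][[\hbar]]$, i.e.
$$
  \delta\oint dz\,I+\frac12\left(\frac{i\hbar}{\pi}\right)^{-1}\bbracket{\oint dz\,I,\oint dz\,I}=0 .
$$
The first term vanishes for free: $\delta\oint dz\,I$ is the $\oint$-class of the chiral total derivative $\partial I$, which integrates to zero --- this is exactly the ``$\delta\oint I=0$'' already used above in the verification of the classical master equation. So the whole content of the proposition is the vanishing of the mode bracket $\bbracket{\oint dz\,I,\oint dz\,I}$ in $\oint\V[[\h^*]][[\hbar]]$.

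To compute that bracket I would expand it by Wick's theorem through the only non-trivial contraction, the chiral OPE $\varphi^i(z)\eta_j(w)\sim\delta_{ij}(i\hbar/\pi)(dz-dw)/(z-w)$ (with the usual matching of $dz$-components, and no self-contractions since the regularized kernel vanishes on the diagonal). Because $I=P^{ij}(\varphi)\eta_i\eta_j$ is quadratic in the fermions $\eta_i$, each of the two vertices carries exactly two $\eta$-legs, so $I(z)I(w)$ has a pole of order at most $2$ and at most two contractions connect the vertices; I would organize the calculation by the number of contractions. The one-contraction contribution, after taking the residue in $z$ and integrating in $w$, equals $(i\hbar/\pi)$ times the classical BV bracket $\fbracket{\oint dz\,I,\oint dz\,I}$, which is zero by the Jacobi identity for $P$ (already noted).

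For the two-contraction (``one-loop'') part I would split into two cases. First, when both contracted legs of one vertex are $\varphi$'s coming from the Taylor expansion of $P^{ij}(\varphi)$ and both contracted legs of the other vertex are its two $\eta$'s: such a term carries the scalar $\sum_{i,j}P^{ij}(\varphi)\,\partial_i\partial_j P^{kl}(\varphi)$, which vanishes because $P^{ij}$ is antisymmetric while $\partial_i\partial_j$ is symmetric in $i,j$. Second, the mixed case, in which one $\varphi$ and one $\eta$ of each vertex get contracted: here the double pole $1/(z-w)^2$ enters the residue only through the first-order Taylor coefficient of the remaining normal-ordered bifield, i.e.\ through a $z$-derivative of a quadratic expression in the $\eta$'s; using the symmetry of $\bbracket{\oint dz\,I,\oint dz\,I}$ under interchanging the two vertices, together with the relabellings allowed by the antisymmetry of $P$, these surviving pieces should combine into the $\oint$-class of total $z$-derivatives $\partial_z\!\left(:\!\eta_b\eta_d\!:\right)$ and hence vanish in $\oint\V=\V[z,z^{-1}]/\im\partial$. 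Combining everything yields the Maurer--Cartan equation, hence the proposition.

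The hard part will be this mixed two-contraction case: one must keep careful track of the Koszul signs coming from the fermionic statistics of the $\eta_i$ and from the one-form degree in the $dz$-matching, and verify that the various sub-contractions really do pair with their $z\leftrightarrow w$ mirrors (and with the $P$-antisymmetry relabellings) into exact $z$-derivatives. By contrast, the vanishing of the one-contraction terms and of the ``$\varphi\varphi$--$\eta\eta$'' contractions is essentially immediate --- the former from the classical master equation already established, the latter from the antisymmetry of $P$.
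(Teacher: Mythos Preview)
Your reduction to the Maurer--Cartan equation and your treatment of the single-contraction piece are correct and match the paper. For two or more contractions, however, you are taking a much harder road than the paper does, and your ``mixed case'' is left as a hope rather than a proof.

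The paper's argument dispatches all multi-contraction terms in one stroke by a form-degree (``type'') observation you have not used. Each Wick contraction between a $\varphi$ at $z$ and an $\eta$ at $w$ carries, by the displayed OPE, a factor of $(dz-dw)$. Hence any term with two or more contractions carries $(dz-dw)^2$. Since $dz$ and $dw$ are odd one-form generators, one has $dz^2=dw^2=0$ and $dz\,dw=-dw\,dz$, so $(dz-dw)^2=0$. This kills every multi-contraction contribution before any residue or sign bookkeeping, giving immediately $\bbracket{\oint I,\oint I}=\fbracket{\oint I,\oint I}=0$.

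Your alternative route via a case split might in principle be completable, but as written it has a genuine gap. In the ``mixed'' case you assert that the surviving pieces ``should combine'' into total $z$-derivatives by using ``the symmetry of $\bbracket{\oint dz\,I,\oint dz\,I}$ under interchanging the two vertices''. That symmetry is not an independent relation when both entries are the same odd element; what you would actually need is a skew-symmetry statement for the OPE coefficients $I_{(n)}I$ themselves, together with a careful accounting of the Koszul signs from the fermionic $\eta$'s and the $dz$-grading. You have not carried this out, and it is exactly the delicate computation that the paper's $(dz-dw)^2=0$ argument renders unnecessary. I would replace your two-contraction analysis by the type-reason argument.
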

\begin{proof}[Sketch of proof] We only need to prove $\bbracket{\oint I, \oint I}=0$ since $\delta \oint I=0$. By definition, $\bbracket{\oint I, \oint I}$ is computed by Wick contractions and OPE's (see \cite{Kac}). A single contraction gives rise to the classical bracket $\fbracket{\oint I, \oint I}$. If we have two or more contractions between the fields, the form of $I$ implies that each contraction contributes a factor of $dz-dw$. However, the product of two copies of $dz-dw$ vanishes by the type of the differential forms used. This implies
$
\bbracket{\oint I, \oint I}=\fbracket{\oint I, \oint I}=0. 
$
\end{proof}

This proposition says that no quantum correction is needed at all! This remarkable fact is due to cancellations between bosons and fermions, which is just an incidence of supersymmetry. This result gives a natural interpretation of Kontsevich's graph formula  of star product \cite{Kontsevich-DQ}, which is argued in \cite{CF} by the requirement of BV quantum master equation.  We remark that the tadpole diagrams (i.e. with edges that start and end at the same vertex) that appeared in \cite{CF} vanish by our renormalization scheme on flat spaces: the regularized BV kernel $K_\epsilon$ and propagator $P_\epsilon^L$  on the tadpole are zero before we take the limit $\epsilon\to 0$ (see the proof of Theorem \ref{main-thm}). In particular,  our construction here should lead to a rigorous formulation of \cite{CF} by gluing the above linear case to Poisson manifolds.  

In general, when the surface $\Sigma$ is a compact Riemann surface which is no longer flat,  further obstructions may exist for quantization. One such example is the topological B-model from a genus $g$ surface $\Sigma$ to a complex manifold $X$. The perturbative BV quantization in the current sense is analyzed in \cite{B-model}. It is found that the tadpole diagram gives rise to the obstruction class (anomaly) by $(2g-2)c_1(X)$, requiring for a Calabi-Yau geometry to be quantizable.  It would be very interesting to extend the results in this paper systematically to arbitrary surfaces and to the string-theoretical formulation of coupling with 2d gravity. 
 
\section{Application: quantum B-model on elliptic curves}\label{section-B}
In this section, we apply our theory to solve the higher genus $B$-model on elliptic curves. Part of the results are presented in \cite{L-elliptic} based mainly on symmetry argument. Using the technique we have developed in this paper, we present a self-contained description of quantum B-model on elliptic curves in terms of chiral vertex algebra and give stronger results on the exact solution of the full system. Combining with the A-model results in \cite{virasoro}, it leads to a manifest interpretation of higher genus mirror symmetry on elliptic curves in terms of boson-fermion correspondence.   

\subsection{BCOV theory on elliptic curves}\label{sec:BCOV-elliptic} We consider topological B-model on Calabi-Yau geometry, which concerns with the geometry of complex structures. In \cite{BCOV}, Bershadsky, Cecotti, Ooguri, and Vafa proposed \emph{Kodaira-Spencer gauge theory} on Calabi-Yau 3-folds as the leading approximation of B-twisted closed string field theory. This is fully generalized in \cite{Si-Kevin} to arbitrary Calabi-Yau manifolds,  giving rise to a complete description of B-twisted closed string field theory in the sense of Zwiebach \cite{Zwiebach}. We shall call this BCOV theory. An earlier related work on the finite dimensional
toy model of BCOV theory appeared in \cite{Losev} in the absence of the issue of renormalization.
 
 In this section, we describe BCOV theory on elliptic curves \cite{L-elliptic} and study its quantum geometry using the tools we have developed in this paper. 

Let $\Etau=\C/(\Z\oplus Z\tau)$ as before. Let $z$ denote the linear holomorphic coordinate. The space of fields of BCOV theory  \cite{Si-Kevin} specializing to $\Etau$ is given by
$$
   \E=\Omega^{0,\bullet}(\Etau, \OO_{\Etau})[[t]]\oplus \Omega^{0,\bullet}(\Etau, T_{\Etau}[1])[[t]]. 
$$
Here $T_{\Etau}[1]$ is the holomorphic tangent bundle sitting at degree $-1$. Let $t$ be a formal variable of cohomology degree $0$ that represents ``gravitational descendants". Note that we have used a different grading from \cite{Si-Kevin} \footnote{In \cite{Si-Kevin}, the grading is $\E=\Omega^{0,\bullet}(\Etau, \OO_{\Etau})[[t]]\oplus \Omega^{0,\bullet}(\Etau, T_{\Etau}[-1])[[t]]$ and $t$ has cohomology degree $2$} so our interaction will have degree $0$. The differential is given by 
$$
  Q=\dbar+ \delta, \quad \delta=t\pa,
$$
where $\pa: T_{\Etau}\to \OO_\Etau$ is the divergence operator with respect to the holomorphic volume form $dz$. In terms of the set-up in section \ref{section-2d}, we have
\begin{itemize}
\item $
   \h= \C[[t,\theta]], \deg(t)=0, \deg(\theta)=-1
$. Here $\theta$ represents the global vector field $\pa_z$.  Then $\E\iso \Omega^{0,\bullet}(\Etau)\otimes \h$. 
\item $\delta={\pa\over \pa z}\otimes t{\pa \over \pa \theta}$.
\item However, $\E$ is $(-1)$-shifted Poisson instead of symplectic. Let 
$$
    h_L(\ora{z_1},\ora{z_2})=\sum_{\lambda\in \Lambda}{1\over 2\pi L}e^{-|z_1-z_2+\lambda|^2/2L}, \quad \Lambda=\Z+\Z\tau,
$$
be the heat kernel function on $E_\tau$. The regularized BV kernel is given by 
$$
   K_L=i \pa_{z_1}h_L(\ora{z_1}, \ora{z_2})(d\bar z_1\otimes 1-1\otimes d\bar z_2)C_\h
$$
where $C_\h:=t^0\otimes t^{0}\in \h\otimes \h$.  The regularized propagator is 
$$
   P_\epsilon^L=-2i\int_\epsilon^L du\ \pa_{z_1}^2 h_u(\ora{z_1},\ora{z_2})C_\h. 
$$
\end{itemize}

The situation differs a bit from our set-up in Section \ref{section-2d}. There is one more holomorphic derivative for our BV kernel and the factor $C_\h$ is highly degenerate. Nevertheless, techniques in section \ref{section-VA} can be applied to the Poisson case without much change (see also Remark \ref{rmk-Poisson}). In particular, 
 \begin{itemize}
 \item $K_L$ defines a regularized BV operator $\Delta_L$ such that $(\OO(\E), Q, \Delta_L)$ is a differential BV algebra. 

\item $K_0$ well-defines a BV-bracket on local functionals as in  Definition \ref{defn-BV-bracket}
 $$
    \{-,-\}: \Ol(\E)\otimes \Ol(\E)\to \Ol(\E). 
 $$
\end{itemize}
Introduce
$$
   \abracket{-}_0: \Sym^\bullet(\C[[t]])\to \C, \quad \abracket{t^{k_1}\otimes \cdots\otimes t^{k_n}}_0=\binom{n-3}{k_1,\cdots, k_n}.
$$
These coefficients represents intersection numbers of $\psi$-classes on moduli space of stable rational curves. We extend it $\Omega^{0,\bullet}[\theta]$-linearly to 
$$
    \abracket{-}_0: \Sym^\bullet(\E)\to \Omega^{0,\bullet}[\theta]. 
$$
\begin{defn}[\cite{Si-Kevin}]\label{defn-BCOV} We define the classical BCOV interaction $I^{BCOV}\in \Ol(\E)$ by the local functional
$$
  I^{BCOV}(\varphi)=i\int_{\Etau} {dz}\int d\theta \abracket{e^{\varphi}}_0, \quad \varphi \in \E.
$$
Here $\abracket{e^{\varphi}}_0$ is understood as
$$
\abracket{e^\varphi}_0=\sum_{k\geq 3} \abracket{\varphi^{\otimes{k}}\over k!}_0.
$$
The fermionic integral $\int d\theta$ means taking the coefficient of the term with $\theta$
$$
  \int d\theta (a+\theta b):=b. 
$$
\end{defn}

\begin{prop} $I^{BCOV}$ satisfies the classical master equation
$$
  QI^{BCOV}+{1\over 2}\fbracket{I^{BCOV}, I^{BCOV}}=0. 
$$
\end{prop}

This is proved in \cite{Si-Kevin} for any Calabi-Yau. See \cite[Theorem 3.5]{HLTY} for another  proof and generalization. To be self-contained, we remark that this proposition also follows from Corollary \ref{cor-classical-BCOV}.

\begin{rmk}By Remark \ref{rmk-CME}, $Q+\{I^{BCOV},-\}$ defines a $L_\infty$-structure on $\E$. It is shown in \cite{Si-Kevin} (see \cite{Si-String-Math} for a geometric presentation) that this $L_\infty$-structure is quasi-isomorphic to the standard dg Lie algebra structure with differential $Q$ and Schouten-Nijenhuis bracket. 
\end{rmk}

Since the Poisson kernel is degenerate and contains one more holomorphic derivative, the application of Theorem \ref{main-thm} to our situation requires slight modification. Let us parametrize
$$
 \varphi=\sum_{k\geq 0}b_kt^k+\eta_k \theta t^k, \quad \varphi \in \h. 
$$
We introduce mutually local fields $b_k(z), \eta_k(z)$ with OPE relations
\begin{align*}
     b_0(z)b_0(w)\sim {i\hbar \over \pi}{ 1\over (z-w)^2},&\quad b_k(z)b_m(w)\sim 0, k+m>0.\\
     b_\bullet(z)\eta_\bullet(w)\sim 0,& \quad \eta_\bullet(z)\eta_\bullet(w)\sim 0. 
\end{align*}
The OPEs exactly respect the structure of  our Poisson kernel: ${1\over (z-w)^2}$ represents the derivative of the delta-function, and only $b_0$ appears in the propagator. The associated vertex algebra $\V[[\h^*]]$ is the tensor product of a Heisenberg vertex algebra (generated by the field $b_0(z)$) with several copies of commutative vertex algebra (generated by the fields $(b_{>0}(z), \eta_{\bullet}(z))$). Equivalently, if we collect the fields with parameter $t$
$$
   b(z,t)=\sum_{k\geq 0}b_k(z)t^k, \quad \eta(z,t)=\sum_{k\geq 0}\eta_k(z)t^k, 
$$
then the OPE's can be simply written as 
$$
   b(z_1,t_1)b(z_2,t_2)\sim   {i\hbar \over \pi} {1 \over (z_1-z_2)^2}, \quad b(z_1,t_1)\eta(z_2,t_2)\sim  0, \quad \eta(z_1,t_1)\eta(z_2,t_2)\sim 0. 
$$
The differential $\delta$ is then dually expressed as
$$
     \delta b(z,t)=t\pa_z \eta(z,t), \quad \delta \eta(z,t)=0. 
$$
In terms of components, 
$$
  \delta b_{k+1}=\pa_z \eta_k, \quad \delta \eta_k=0. 
$$
We adapt notations in section \ref{section-2d}. The classical BCOV interaction can be expressed as $I^{BCOV}=\hat I_0$, where $I_0\in \V[[\h^*]]$ is defined similarly to Definition \ref{defn-BCOV}
$$
I_0(b,\eta)= \abracket{e^{b}\otimes \eta}_0. 
$$
Classical master equation implies that it satisfies the following Maurer-Cartan equation modulo $\hbar$
$$
 \delta \oint dz I_0+{1\over 2}{\pi \over i\hbar} \bbracket{\oint dz I_0, \oint dz I_0}=O(\hbar).
$$

Our goal is to find $I=\sum\limits_{g\geq 0}\hbar^g I_g\in \V[[\h^*]][[\hbar]]$ as a quantum correction of the classical BCOV interaction $I_0$ satisfying the exact Maurer-Cartan equation 
$$
  \delta \oint dz I+{1\over 2}{\pi \over i\hbar} \bbracket{\oint dz I, \oint dz I}=0. 
$$
This will give a solution of the effective BV quantum master equation for our BCOV theory by the next theorem, which is the analogue of Theorem \ref{main-thm} but for chiral bosons. 

\begin{thm}\label{thm-chiral-boson} Let $I=\sum\limits_{g\geq 0}\hbar^g I_g\in \V[[\h^*]][[\hbar]]$ of degree $1$ and $\hat I$ be the associated chiral local functional for BCOV theory on elliptic curves.  Let $W(-,-)$ be the homotopy RG flow operator defined in Definition \ref{defn-stable}. Let   $P_\epsilon^L=-2i\int_\epsilon^L du\ \pa_{z_1}^2 h_u(\ora{z_1}\ora{z_2})C_\h
$ be the regularized propagator and $K_L=i \pa_{z_1}h_L(\ora{z_1}, \ora{z_2})(d\bar z_1\otimes 1-1\otimes d\bar z_2)C_\h
$ be the regularized BV kernel for BCOV theory on elliptic curves. Let $\oint dz I=I_{(0)}$ be the corresponding mode of the vertex operator $I$ as an element of the modes Lie algebra $\oint \V[[\h]][[\hbar]]$.

 Then the limit
$$
   {\hat I[L]}:= \lim_{\epsilon \to 0} W(P_\epsilon^L, \hat I)
$$
exists. The  family $\{\hat I[L]\}_{L>0}$ satisfies the renormalized quantum master equation 
$$
   \bracket{Q +\hbar \Delta_L} \hat I[L]+{1\over 2}\fbracket{\hat I[L],\hat I[L]}_L=0
$$
if and only if 
$$
  \delta \oint dz I+{1\over 2}\bracket{i\hbar \over \pi }^{-1} \bbracket{\oint dz I, \oint dz I}=0, 
$$
i.e.. $\oint dz I$ is a Maurer-Cartan element of $\oint \V[[\h]][[\hbar]]$.  Here the BV operator $\Delta_L$ is associated to the BV kernel $K_L$ as defined in Definition \ref{defn-BV-kernel}. 
\end{thm}
\begin{proof} The existence of the limit ${\hat I[L]}= \lim\limits_{\epsilon \to 0} W(P_\epsilon^L, \hat I)$ follows by Remark \ref{rmk:chiral-boson}. For the statement on quantum master equaton, the proof of Theorem \ref{main-thm} carries over here word by word. The only difference is that our propagator and BV kernel are the holomorphic derivative of those in Theorem \ref{main-thm}. This implies that at the end of the two vertex computation in Step 2 there, each contraction leads to a factor (up to some normalization)
$$
{\pa_{z_2}}{1\over (z_1-z_2)}={1\over (z_1-z_2)^2}
$$
which is precisely the singular part of the OPE in the BCOV theory. 
\end{proof}

In the rest of this section, I will explain how to find a canonical solution of the Maurer-Cartan equation. It will be related to Gromov-Witten theory on elliptic curves by mirror symmetry.

\subsection{Hodge weight and dilaton equation}\label{section-dilaton} The first simplification we will make is to use rescaling symmetries of quantum master equation to restrict the possible form of the action functional. 

We assigne the following gradings in $\V[[\h^*]][[\hbar]]$ and $\oint \V[[\h^*]][[\hbar]]$.
\begin{center}
\begin{tabular}{|c|c|c|c|c|c|c|c|}
\hline
 & $\pa_z^mb_k$ & $\pa_z^m\eta_k$ & $\hbar$ & $\delta$ & $z$ & $\oint dz$ & $[-,-]$\\ \hline
cohomology degree (deg) & 0 & 1 & 0 & 1 & 0& 0 & 0\\ \hline
conformal weight (cw) & -k+m+1 & -k+m & 0 & 1 & -1 &-1& 0\\ \hline
dilaton dimension (dim)& m & m & -2 & 1 & 0& 0 & -1\\ \hline
Hodge weight (hw=cw-dim) & -k+1 & -k & 2 & 0 & 0 & -1 & 1\\
\hline
\end{tabular}
\end{center}
They are all compatible with quantum master equation. The classical BCOV interaction has 
$$
 \text{deg}(I_0)=1,\quad \text{cw}(I_0)=2, \quad \text{dim}(I_0)=0.
$$
\begin{defn}
A solution $I\in \V[[\h^*]][[\hbar]]$ of quantum master equation for which
$$
 \text{deg}(I)=1,\quad \text{cw}(I)=2, \quad \text{dim}(I)=0,
$$
will be called an \emph{equivariant quantization}. 
\end{defn}

\begin{rmk}
For an equivariant quantization, 
$$
    hw(I_g)=\text{cw}(I_g)-\text{dim}(I_g)=2-2g. 
$$ 
This is exactly the Hodge weight condition for elliptic curves described in \cite{Si-Kevin}. 
The condition $\text{dim}(I)=0$ is essentially equivalent to the dilaton equation \cite{L-elliptic}. 
Therefore the equivariance condition is naturally viewed as imposing the Hodge weight condition and the dilaton equation. 
\end{rmk}
\begin{rmk}\label{rmk-modular} The dilaton dimension condition implies that the genus $g$ correction $I_g$ contains exactly $2g$ holomorphic derivatives. This is the modification of modular invariant quantization (Definition \ref{defn-modular-quantization}). It changes the number of holomorphic derivatives from $g$ to $2g$ exactly because there exists an extra holomorphic derivative on our propagator. In particular, an analogue of Theorem \ref{thm-modularity} holds in this situation. See also \cite{L-elliptic}. 
\end{rmk}

\begin{defn}Let us denote the homogeneous component
$$
   \V[[\h^*]]^d_{w}:=\{a\in \V[[\h^*]]| \text{deg}(a)=d, \text{cw}(a)=w\}.
$$
\end{defn}

We will focus on equivariant quantizations, which are given by elements  $I\in \V[[\h^*]]^1_{2}$ satisfying the Maurer-Cartan equation. In this case, we can use the rescaling symmetry to set $\hbar=\pi/i$. The power of $\hbar$ can be recovered from counting the number of derivatives. 

From now on, we will work with the normalized OPE
\begin{align*}
     b_0(z)b_0(w)\sim {1 \over (z-w)^2},&\quad b_k(z)b_m(w)\sim 0, k+m>0.\\
     b_\bullet(z)\eta_\bullet(w)\sim 0,& \quad \eta_\bullet(z)\eta_\bullet(w)\sim 0. 
\end{align*}

\subsection{Reduction to Fedosov's equation} In this section, we use boson-fermion correspondence to further reduce quantum master equation to a deformation quantization problem of Fedosov's equation \cite{Fedosov}. 

\begin{defn}\label{defn-background}
Let us package the fields $\{b_{>0}, \eta_{\bullet}\}$ into new series denoted by
$$
   \tilde b(z)=\sum_{k\geq 1}{t^k\over k!} b_k(z), \quad \tilde \eta(z)=\sum_{k\geq 1}{t^k\over k!} \eta_{k-1}(z). 
$$
These fields do not appear in the propagator, and will be called the \emph{background fields}. $b_0$ does appear in the propagator, and will be called the \emph{dynamical field}. 
\end{defn}

The deformation quantization problem arises from viewing the linear coordinate $z$ and the descendant variable $t$ as a Darboux system of a holomorphic symplectic structure on $\C^2$:
$$
   \omega=dz\wedge dt.  
$$

 We consider the following differential ring freely generated by two generators $\tilde b, \tilde\eta$ and two derivatives $\pa_z, \pa_t$:
$$
    \B=\C[[\pa_z^\bullet \pa_t^\bullet \tilde b, \pa_z^\bullet \pa_t^\bullet \tilde \eta]]. 
$$
Here $\tilde b$ has even parity of cohomology degree $0$, $\tilde \eta$ has odd parity of cohomology degree $1$. Here we have confused ourselves to use the same symbols $\tilde b, \tilde \eta$ for our generators in $\B$. Later on, we will plug into expressions in terms of the background fields in Definition \ref{defn-background} when the meaning is clear from the context.

The symplectic form induces a Poisson structure on $\B$ by
$$
   \{F, G\}=\pa_t F\pa_z G-\pa_z F \pa_t G, \quad F, G\in \B.
$$ 
We also introduce a differential $\delta$ as an analogue of that in our BCOV theory
$$
\delta: \B\to \B, \quad \tilde b\to \pa_z \tilde\eta. 
$$
It is easy to check that $\delta$ is compatible with the Poisson bracket, hence $\B$ becomes a dg Poisson algebra. It has a natural deformation quantization in terms of the Moyal product $\star$
$$
  \B \star \B \to \B, \quad F\star G=\sum_{k_1, k_2\geq 0}{(-1)^{k_2}\over 2^{k_1+k_2}k_1!k_2!}\bracket{\pa_{t}^{k_1}\pa_z^{k_2}F}  \bracket{\pa_t^{k_2}\pa_z^{k_1}G}.
$$
The following lemma is straight-forward.

\begin{lem}The triple $(\B, \star, \delta)$ defines an associative differential graded algebra. In particular, $(\B, \delta, [-,-]_\star)$ is a DGLA, where $[-,-]_\star$ is the commutator with respect to the Moyal product. 
\end{lem}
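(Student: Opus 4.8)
\textit{Strategy.} I would reduce the lemma to two entirely standard inputs: associativity of the Weyl--Moyal product attached to a constant symplectic form, and the fact that a graded derivation of an associative algebra is automatically a graded derivation of the associated commutator Lie bracket.

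\textit{Associativity of $\star$.} First I would recognize the formula in the statement as the Weyl--Moyal product attached to the constant Poisson bivector $\pa_t\wedge\pa_z$ on $\B$, namely
$$
   F\star G=\mu\circ\exp\!\bracket{\tfrac12(\pa_t\otimes\pa_z-\pa_z\otimes\pa_t)}(F\otimes G),
$$
where $\mu$ is the graded-commutative multiplication of $\B$ and $\pa_z,\pa_t$ are the two commuting \emph{even} derivations of $\B$ that shift jet indices; expanding the exponential reproduces the displayed double sum. Associativity then follows by the classical computation: both $(F\star G)\star H$ and $F\star(G\star H)$ are equal to
$$
   \mu_3\circ\exp\!\Bigl(\tfrac12\sum_{1\le i<j\le 3}\bigl(\pa_t^{(i)}\pa_z^{(j)}-\pa_z^{(i)}\pa_t^{(j)}\bigr)\Bigr)(F\otimes G\otimes H),
$$
since the bidifferential operators attached to distinct pairs of tensor slots all commute (equivalently, $\pa_t\otimes\pa_z-\pa_z\otimes\pa_t$ is a Drinfeld twist for the abelian Lie algebra $\langle\pa_z,\pa_t\rangle$, so the cocycle identity holds for trivial reasons). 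The infinite sums make sense because $\star$ is filtered by the dilaton dimension (equivalently, one may keep $\hbar$ formal here and specialise afterwards), so only finitely many terms contribute in each degree.

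\textit{The differential.} Next I would check that $\delta$ is a degree-$1$ graded derivation of $\star$. By its definition, $\delta$ is the graded derivation of $\mu$ extending $\delta\tilde b=\pa_z\tilde\eta$, $\delta\tilde\eta=0$, and it commutes with $\pa_z$ and $\pa_t$ as operators on $\B$ (indeed $\delta(\pa_z^a\pa_t^b\tilde b)=\pa_z^{a+1}\pa_t^b\tilde\eta$ and $\delta(\pa_z^a\pa_t^b\tilde\eta)=0$). Writing the coderivation associated to $\delta$ as $\Delta(\delta)=\delta\otimes 1+1\otimes\delta$ with the Koszul sign, the derivation property for $\mu$ reads $\delta\circ\mu=\mu\circ\Delta(\delta)$, and $\Delta(\delta)$ commutes with $\exp\!\bracket{\tfrac12(\pa_t\otimes\pa_z-\pa_z\otimes\pa_t)}$ precisely because $\delta$ commutes with the even operators $\pa_z,\pa_t$; composing gives $\delta(F\star G)=(\delta F)\star G+(-1)^{\bar F}F\star(\delta G)$. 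Also $\delta^2=0$, since $\delta^2$ is a derivation annihilating the generators ($\delta^2\tilde b=\pa_z\delta\tilde\eta=0$). This establishes that $(\B,\star,\delta)$ is an associative differential graded algebra.

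\textit{The DGLA.} The last assertion is then formal: in any associative graded algebra the graded commutator $[F,G]_\star=F\star G-(-1)^{\bar F\bar G}G\star F$ is graded-antisymmetric and satisfies the graded Jacobi identity, and a graded derivation of the product is a graded derivation of the commutator; together with $\delta^2=0$ this is the definition of a DGLA. The only points that deserve care—rather than constituting a genuine obstacle—are the bookkeeping of Koszul signs in the derivation identity for $\delta$ and the remark on why the formal series defining $\star$ converge in $\B$; everything else is the standard Weyl--Moyal formalism.
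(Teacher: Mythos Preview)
Your proof is correct and supplies precisely the details the paper omits: the paper does not prove this lemma at all, merely declaring it ``straight-forward'' before the statement. Your approach---recognizing $\star$ as the Weyl--Moyal product for the constant bivector $\pa_t\wedge\pa_z$, invoking the standard associativity argument via the triple exponential, checking that $\delta$ commutes with the even derivations $\pa_z,\pa_t$ and hence with the twist, and then passing to the commutator DGLA formally---is exactly the intended verification.

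One small terminological point: the filtration you invoke to justify convergence is not literally the dilaton dimension of Section \ref{section-dilaton} (that grading lives on $\V[[\h^*]]$, not on $\B$), but rather the grading on $\B$ by total jet order (sum of all $\pa_z$- and $\pa_t$-indices on the generators). The $(k_1,k_2)$ term in the Moyal sum raises this total by $k_1+k_2$, so for any fixed output monomial only finitely many $(k_1,k_2)$ contribute. This is what you mean, and it is equivalent to reinstating a formal $\hbar$ as you suggest; it just isn't quite the same filtration named in the paper.
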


We also introduce the analogue grading of conformal weight by 
$$
   \cw(\pa_z^m\pa_t^k\tilde b)=m-k+1, \quad \cw(\pa_z^m\pa_t^k \tilde \eta)=m-k+1. 
$$
Note that $\tilde \eta$ has now $\cw=1$ by the shift in our Definition \ref{defn-background}. Then 
$$
\cw(\star)=0, \quad \cw(\delta)=1.
$$ 
We denote the homogeneous component
$$
  \B^d_w=\{u\in \B| \text{deg}(u)=d, \quad \cw(u)=w\}. 
$$

Our goal in this section is to construct a morphism of DGLA preserving the conformal weight
$$
  \Phi:   (\B, \delta, [-,-]_\star)\to \bracket{\oint (\V[[\h^*]]), \delta, [-,-]}
$$ 
and construct a canonical solution of Maurer-Cartan equation in $\B$. This leads to a solution of quantum master equation for our BCOV theory. 

\subsubsection{boson-fermion correspondence}\label{section-BF} We will construct $\Phi$ in terms of boson-fermion correspondence. Let us first fix our notations here and refer details to \cite{Kac,soliton}. We introduce a pair of fermions $\psi, \psi^\dagger$ with OPE
$$
  \psi(z)\psi^\dagger(w) \sim {1 \over z-w}. 
$$
We introduce a free boson with OPE
$$
    \phi(z)\phi(w)\sim \log(z-w), \quad \phi(z)=\sum_{k\neq 0}{\alpha_n z^{-n}\over -n}+\alpha_0 \log z +p. 
$$
$p$ is the momemtum creation operator as a conjugate of $\alpha_0$. The boson-fermion correspondence says that a free boson is equivalent to a pair of fermions, under the following correspondence rule
$$
    \psi=:e^{\phi}:_B, \quad \psi^\dagger=:e^{-\phi}:_B, \quad
\text{and}\quad  
    {\pa\phi=:\psi \psi^\dagger:_F}. 
$$
Here $:-:_B, :-:_F$ denote the normal ordering for bosonic fields and fermionic fields respectively. The following fundamental relation holds 
$$
{:\psi(z) \psi^\dagger(w):_F={1\over z-w} \bracket{ :e^{\phi(z)-\phi(w)}:_B -1}}.
$$
Let us expand by 
$$
{:e^{\phi(z)-\phi(w)}:_B=1+\sum_{k\geq 1} {(z-w)^k\over k!}W^{(k)}(w), \quad W^{(k)}(z)=\sum_{n\in \Z} z^{-n-k}W_n^{(k)}},
$$
then $W_n^{(k)}$ generate the so-called $W_{1+\infty}$ algebra. In terms of bosons, 
$$
  {W^{(k)}(\pa_z\phi)=\sum_{\sum_{i\geq 1}ik_i=k}{k! \over \prod_i k_i!} :\prod_i\bracket{{ {1\over i!}\pa^i \phi}}^{k_i}}:_B.
$$
Note that $W^{(k)}$ only depends on $\pa_z\phi$. We can also express it in terms of fermions
$$
{ W^{(k)}(\psi, \psi^\dagger)=k :(\pa^{k-1}\psi) \psi^\dagger:_F }.
$$
It follows from the fermionic expression that the Fourier modes $\oint dz z^m W^{(k)}$ generates a central extension of the Lie algebra of differential operators on the circle
$$
k z^m \pa_z^{k-1}\rightsquigarrow \oint dz z^m W^{(k)}, \quad k\geq 1, m\in \Z. 
$$
If we only look at non-negative modes, then we have a Lie algebra isomorphism 
\begin{align*}
  \mathbf W:  \C\bbracket{z,\pa_z} &\stackrel{\iso}{\to} \text{Span}_{\C}\fbracket{ \oint dz z^m W^{(k)}}_{k\geq 1, m\geq 0} \\
     k z^m \pa_z^{k-1}  & \to   \oint  dz  z^m W^{(k)}.  
\end{align*}

Our field $b_0$ of Heisenberg vertex algebra can be identified via free boson by 
$$
   b_0(z)=\pa_z \phi(z). 
$$
The other fields $b_{>0}, \eta_{\bullet}$ generate holomorphic (commutative) vertex algebra (i.e., all OPE's between them have no singular terms). Under the boson-fermion correspondence, 
$$
   \V[[\h^*]][[e^{\pm p}]]\iso \C[[\pa_z^m \psi, \pa_z^m \psi^\dagger, \pa_z^m b_{>0}, \pa_z^m \eta_{\bullet}]]_{m\geq 0}.
$$
If we introduce the charge grading, 
$$
   \text{charge}(\psi)=1, \quad \text{charge}(\psi^\dagger)=-1, \quad \text{charge}(\text{others})=0,
$$
then $\V[[\h^*]e^{kp}$ corresponds the homogenous component of charge $k$ on the fermionic side. We will be mainly interested in the charge $0$ component. The operator $\delta$ does not involve $b_0$. The only nontrivial part of $\delta$ is
$$
  \delta (\pa_z^mb_{k+1})=\pa_z^{m+1} \eta_k, \quad \forall k,m\geq 0. 
$$

\subsubsection{Reduction to Fedosov's equation} Now we construct the map $\Phi$. 

\begin{defn}\label{defn-Phi} We define $  \Phi: \B \to \oint (\V[[\h^*]])$ by
$$
  \Phi(J)=\sum_{k\geq 0}{1\over k+1} \oint dz W^{(k+1)}(b_0) \oint dt t^{-k-1}e^{{1\over 2}{\pa\over \pa z}{\pa\over \pa t}} J(\tilde b, \tilde \eta), \quad J\in \B. 
$$
In this expression, we need to substitute the generators $\tilde b, \tilde \eta$ in terms of the background fields as in Definition \ref{defn-background}. $\oint dt$ is the same as taking the residue at $t=0$. $W^{(k+1)}(b_0)$ is defined in the previous subsection under  $b_0=\pa_z\phi$. 
\end{defn}

 It is easy to see that $\Phi$ preserves the conformal weight. Moreover, 
\begin{prop}\label{prop-Phi}
$\Phi$ is a morphism of DGLA's
$$
  \Phi:   (\B, \delta, [-,-]_\star)\to \bracket{\oint (\V[[\h^*]]), \delta, [-,-]}.
$$
\end{prop}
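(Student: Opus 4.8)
The plan is to factor $\Phi$ through the Weyl--standard symbol transform, reducing the statement to a symbol-calculus identity in $(z,t)$ together with the $W_{1+\infty}$ realization of differential operators on the circle. Regard $(z,t)$ as Darboux coordinates for $\omega=dz\wedge dt$, so that $t$ is the momentum conjugate to $z$: then a polynomial in $t$ is the Weyl symbol of a differential operator in $z$, the Moyal product $\star$ is the Weyl symbol product, and a short computation shows that $e^{\frac12\pa_z\pa_t}$ is an algebra isomorphism from $(\B,\star)$ onto $(\B,\cdot)$, where
$$
F\cdot G=\sum_{l\geq0}\tfrac1{l!}(\pa_t^lF)(\pa_z^lG)
$$
is the product of symbols under composition of differential operators, with $\pa_z$ allowed to act on the background $z$-dependence carried by $\tilde b,\tilde\eta$; this isomorphism moreover commutes with $\delta$, since $\delta$ commutes with $\pa_z$ and $\pa_t$. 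Writing $\Phi=\Phi_2\circ e^{\frac12\pa_z\pa_t}$ with
$$
\Phi_2(\tilde J)=\sum_{k\geq0}\tfrac1{k+1}\oint dz\,W^{(k+1)}(b_0)\oint dt\,t^{-k-1}\,\tilde J,
$$
it suffices to prove that $\Phi_2$ is a morphism of DGLA's from $(\B,\delta,[-,-]_\cdot)$ to $\bracket{\oint\V[[\h^*]],\delta,[-,-]}$.

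The chain-map property of $\Phi_2$ is routine: on $\B$ the differential is the derivation with $\delta\tilde b=\pa_z\tilde\eta$, $\delta\tilde\eta=0$; on $\oint\V[[\h^*]]$ it acts only on the background fields, again by $b_{k+1}\mapsto\pa_z\eta_k$ under the substitution $\tilde b(z)=\sum_{k\geq1}\tfrac{t^k}{k!}b_k(z)$, $\tilde\eta(z)=\sum_{k\geq1}\tfrac{t^k}{k!}\eta_{k-1}(z)$, and it annihilates every $W^{(k+1)}(b_0)=W^{(k+1)}(\pa_z\phi)$ because $\delta$ does not involve $b_0$. As $\delta$ commutes with the residues $\oint dz$ and $\oint dt\,t^{-k-1}$, the identity $\Phi_2\circ\delta=\delta\circ\Phi_2$ follows at once, and hence so does $\Phi\circ\delta=\delta\circ\Phi$.

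For the bracket I would argue via Borcherds' commutator formula. Using $b_0=\pa_z\phi$ and $W^{(k+1)}(b_0)=(k+1):(\pa_z^k\psi)\,\psi^\dagger:_F$, the prefactor $\tfrac1{k+1}$ cancels, so $\Phi_2(\tilde J)=\sum_k\oint dz\,:(\pa_z^k\psi)(z)\,\psi^\dagger(z):_F\,\bigl(\oint dt\,t^{-k-1}\tilde J\bigr)$. Because $b_{>0},\eta_\bullet$ generate a commutative vertex subalgebra and commute with $\psi,\psi^\dagger$, every contraction in $[\Phi_2(\tilde J_1),\Phi_2(\tilde J_2)]$ runs through the single singular OPE $\psi(z)\psi^\dagger(w)\sim(z-w)^{-1}$ --- only the fermion bilinears talk to one another. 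Their mode commutators are governed by the isomorphism $\mathbf W:\C\bbracket{z,\pa_z}\stackrel{\iso}{\to}\text{Span}_{\C}\fbracket{\oint dz\,z^mW^{(k)}}$, i.e. they coincide with the brackets of the differential operators $z^m\pa_z^{k-1}$ on the circle, with no central term precisely because $\mathbf W$ identifies the \emph{unextended} Lie algebra of differential operators with the span of non-negative modes. Extracting the residues in the Borcherds formula forces a Taylor expansion, about the insertion point of one vertex, of the background factor attached to the other, which converts the differential-operator degree recorded by $\pa_t$ on one side into $\pa_z$ acting on that complementary factor --- exactly the bidifferential operator $\sum_l\tfrac1{l!}\pa_t^l\otimes\pa_z^l$ defining the product $\cdot$. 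Assembling the pieces should give $[\Phi_2(\tilde J_1),\Phi_2(\tilde J_2)]=\Phi_2([\tilde J_1,\tilde J_2]_\cdot)$, whence the proposition after composing with $e^{\frac12\pa_z\pa_t}$.

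The hard part will be the numerical bookkeeping in this last step: one must verify that the structure constants of the $W_{1+\infty}$ OPE, together with the binomial factors from the Taylor expansions and residues, reassemble \emph{exactly} into the combinatorics of the product $\cdot$, and that the would-be $W_{1+\infty}$ central term genuinely drops out in the non-negative-mode range where $\Phi_2$ takes values. I would organize this by testing the identity on a spanning set of $\tilde J$'s --- monomials in the $\pa_z^i\pa_t^j\tilde b,\pa_z^i\pa_t^j\tilde\eta$, or, better, formal exponentials of linear functionals --- so that all contractions become Gaussian and the combinatorics collapse to a single resummation identity; the odd generator $\tilde\eta$ never enters a contraction (since $\eta_\bullet$ is a commutative generator) and contributes only Koszul signs, so the fermionic sector adds no analytic content. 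That $\Phi$ preserves the conformal-weight grading was already noted just before the statement and is immediate from the weight assignments.
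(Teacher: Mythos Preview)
Your approach is essentially the paper's own. Both factor $\Phi$ as the Weyl-to-standard symbol transform $e^{\frac12\pa_z\pa_t}$ followed by the $W_{1+\infty}$ realization of differential operators, and both dispose of the $\delta$-compatibility as a routine check on generators.

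Where the paper is more efficient is exactly your ``hard part''. Instead of unpacking Borcherds' formula and re-assembling the combinatorics by testing on monomials, the paper proves the symbol identity in one stroke. Writing $\rho(f)=\sum_k(\oint dt\,t^{-k-1}f)\,\pa_z^k$ for the passage from $t$-series to differential operators, it computes
\[
e^{\frac12\pa_z\pa_t}(f\star g)
=\left.e^{\frac12(\pa_{z_1}+\pa_{z_2})(\pa_{t_1}+\pa_{t_2})}\,e^{\frac12(\pa_{t_1}\pa_{z_2}-\pa_{z_1}\pa_{t_2})}\,f(z_1,t_1)g(z_2,t_2)\right|_{z_i=z,\,t_i=t}
=\left.e^{\pa_{t_1}\pa_{z_2}}\bigl(e^{\frac12\pa_{z_1}\pa_{t_1}}f\bigr)\bigl(e^{\frac12\pa_{z_2}\pa_{t_2}}g\bigr)\right|_{z_i=z,\,t_i=t},
\]
the last expression being precisely the kernel of the composition product $\circ$ (your product $\cdot$) applied to the transformed factors. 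Hence $\rho\bigl(e^{\frac12\pa_z\pa_t}(f\star g)\bigr)=\rho(e^{\frac12\pa_z\pa_t}f)\circ\rho(e^{\frac12\pa_z\pa_t}g)$, which is exactly your claim that $e^{\frac12\pa_z\pa_t}:(\B,\star)\to(\B,\cdot)$ is an algebra map, proved as an identity of commuting exponential bidifferential operators rather than by resummation. After this the paper simply invokes that $\mathbf W$ is a Lie algebra morphism and concludes $\Phi([J_1,J_2]_\star)=[\Phi(J_1),\Phi(J_2)]$.

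So your plan is correct; the refinement is to replace the proposed case-by-case verification by the two-line exponential manipulation above. Your worry about the $W_{1+\infty}$ central term is at the same level of detail as the paper's: the paper does not unpack it either, but absorbs it into the cited Lie algebra isomorphism $\mathbf W$ on non-negative modes.
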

\begin{proof} In terms of notations in the previous subsection, $\Phi$ can be written as
$$
  \Phi(J)=\mathbf W\bracket{\sum_{k\geq 0} \bracket{\oint dt t^{-k-1}e^{{1\over 2}\pa_z\pa_t} J} \pa_z^k}.
$$
To clarify the meaning of this formula, let 
$$
  \rho: \C[z,t]\to \C[z,\pa_z], \quad \rho(z^m t^k)=z^m\pa_z^k. 
$$
An equivalent formal description is
$$
\rho(f)=\sum_{k\geq 0} \bracket{\oint dt t^{-k-1} f(t,z)}\pa_z^k, \quad f\in \C[z,t]. 
$$
Let $\star$ denote the Moyal product on $\C[z,t]$
$$
f\star g=\left. e^{{1\over 2}(\pa_{t_1}\pa_{z_2}-\pa_{z_1}\pa_{t_2})}(f(z_1,t_1)g(z_2,t_2))\right|_{z_i=z,t_i=t}, \quad f,g\in \C[z,t]. 
$$
Consider
\begin{align*}
   e^{{1\over 2}\pa_z\pa_t}(f\star g)&=e^{{1\over 2}\pa_z\pa_t}\bracket{\left. e^{{1\over 2}(\pa_{t_1}\pa_{z_2}-\pa_{z_1}\pa_{t_2})}(f(z_1,t_1)g(z_2,t_2))\right|_{z_i=z,t_i=t}}\\
   &=\left. e^{{1\over 2}(\pa_{z_1}+\pa_{z_2})(\pa_{t_1}+\pa_{t_2})} e^{{1\over 2}(\pa_{t_1}\pa_{z_2}-\pa_{z_1}\pa_{t_2})}(f(z_1,t_1)g(z_2,t_2))\right|_{z_i=z,t_i=t}\\
   &=\left. e^{\pa_{t_1}\pa_{z_2}}(e^{{1\over 2}\pa_{z_1}\pa_{t_1}}f(z_1,t_1)e^{{1\over 2}\pa_{z_2}\pa_{t_2}}g(z_2,t_2))\right|_{z_i=z,t_i=t}.
\end{align*}
Comparing with the associative composition $\circ$ of differential operators, we find
$$
   \rho(e^{{1\over 2}\pa_z\pa_t}(f\star g))=\rho(e^{{1\over 2}\pa_z\pa_t}f) \circ \rho(e^{{1\over 2}\pa_z\pa_t}g). 
$$
In particular, 
$$
   \rho(e^{{1\over 2}\pa_z\pa_t}[f,g]_{\star})=\bbracket{\rho(e^{{1\over 2}\pa_z\pa_t}f), \rho(e^{{1\over 2}\pa_z\pa_t}g)}. 
$$
It follows from this algebraic fact and $\mathbf W$ being a Lie algebra morphism that 
$$
  \Phi([J_1, J_2]_\star)=\bbracket{\Phi(J_1), \Phi(J_2)}, \quad \forall J_1, J_2\in \B. 
$$
The compatibility of $\Phi$ with $\delta$ is easy to verify. 
\end{proof}

To construct an equivariant solution of quantum master equation, we only need to find $J\in \B^1_1$ satisfying 
$$
  \delta J+{1\over 2}[J,J]_\star=0. 
$$
This can be viewed as a version of Fedosov's abelian connection \cite{Fedosov}. 

\begin{lem} 
$$
   H^\bullet(\B, \delta)=\C[[\pa_t^k \tilde \eta]]. 
$$ 
\end{lem}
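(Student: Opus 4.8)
The plan is to compute $H^\bullet(\B,\delta)$ by a Koszul-type argument: $\delta$ pairs up all but countably many of the free generators of $\B$ into acyclic two-term factors, so that the cohomology is the free graded-commutative algebra on the generators that survive. First I would record how $\delta$ acts on generators. Since $\delta$ is a derivation commuting with $\pa_z$ and $\pa_t$, with $\delta\tilde b=\pa_z\tilde\eta$ and $\delta\tilde\eta=0$,
$$
\delta(\pa_z^m\pa_t^k\tilde b)=\pa_z^{m+1}\pa_t^k\tilde\eta,\qquad \delta(\pa_z^m\pa_t^k\tilde\eta)=0,\qquad m,k\geq 0.
$$
Thus the free generators split into \emph{contractible pairs} $x_{m,k}:=\pa_z^m\pa_t^k\tilde b$ (even) and $y_{m,k}:=\pa_z^{m+1}\pa_t^k\tilde\eta$ (odd), with $\delta x_{m,k}=y_{m,k}$, $\delta y_{m,k}=0$, indexed by $m,k\geq 0$; together with the \emph{surviving generators} $z_k:=\pa_t^k\tilde\eta$ (odd), $k\geq 0$, which are $\delta$-closed. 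As a differential graded algebra this exhibits
$$
\B=\Big(\widehat{\bigotimes}_{m,k\geq 0}\C[[x_{m,k},y_{m,k}]]\Big)\,\widehat\otimes\,\C[[z_k:k\geq 0]],
$$
the last factor carrying the zero differential and each baby factor carrying $\delta x_{m,k}=y_{m,k}$.

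Next I would compute the cohomology of one baby factor $(\C[[x,y]],\delta)$, $x$ even, $y$ odd, $\delta x=y$: the degree $-1$ operator $h(x^n):=0$, $h(x^ny):=\tfrac{1}{n+1}x^{n+1}$ satisfies $\delta h+h\delta=\mathrm{id}-\Pi$, with $\Pi$ the projection onto $\C\cdot 1$, so this complex is acyclic apart from $H^0=\C$. It then remains to pass to the infinite (completed) tensor product. Fixing any ordering of the index set of pairs $\alpha=(m,k)$, the standard telescoping formula $h=\sum_\alpha(\prod_{\beta<\alpha}\Pi_\beta)\otimes h_\alpha\otimes(\text{id on the rest})$ assembles the individual homotopies into a degree $-1$ operator on $\B$; on any monomial only the smallest pair-index actually occurring contributes, so the sum is finite monomial-by-monomial and extends continuously to all of $\B$, giving $\delta h+h\delta=\mathrm{id}-\Pi_\infty$ where $\Pi_\infty$ projects onto $\C[[z_k:k\geq 0]]$. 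Finally, since $\delta$ strictly raises the total $\pa_z$-degree while $\C[[z_k:k\geq 0]]$ sits in $\pa_z$-degree $0$, one has $\C[[z_k:k\geq 0]]\cap\im\delta=0$; combining with the homotopy identity yields $H^\bullet(\B,\delta)\cong\C[[z_k:k\geq 0]]=\C[[\pa_t^k\tilde\eta]]$.

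The only non-formal point is the bookkeeping for the completed tensor product and the convergence of its contracting homotopy; alternatively one can filter $\B$ by polynomial degree together with total $\pa_z$-degree, reduce the Künneth step to genuinely finite tensor products, and conclude degreewise. That is where I would put any care in the write-up — everything else is the elementary Koszul computation above, and I expect the author's proof to simply invoke the standard acyclicity of these Koszul factors.
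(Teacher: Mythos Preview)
Your proof is correct and follows essentially the same idea as the paper's. The paper's proof is a single sentence: it identifies $(\B,\delta)$ with the de Rham complex of $\C[[\pa_z^\bullet\pa_t^\bullet\tilde b]]$ with coefficients in $\C[[\pa_t^k\tilde\eta]]$, and implicitly invokes the formal Poincar\'e lemma. Your pairing $x_{m,k}\mapsto y_{m,k}$ is exactly the identification $d(\pa_z^m\pa_t^k\tilde b)=\pa_z^{m+1}\pa_t^k\tilde\eta$, and your telescoping homotopy is the explicit contracting homotopy underlying that Poincar\'e lemma; so you have simply unpacked what the author left implicit, as you yourself anticipated.
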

\begin{proof} This follows from the observation that the complex $(\B, \delta)$ can be identified (as a cochain complex) with the de Rham complex of $\C[[\pa_z^{\bullet} \pa_t^{\bullet}\tilde b]]$ with coefficient in $\C[[\pa_t^k \tilde \eta]]$. The Lemma follows by Poincar\'{e}'s Lemma. 
\end{proof}

\begin{cor}
$$
  H^1(\B, \delta)_{1}= \C \tilde \eta, \quad H^2(\B, \delta)_2=0. 
$$
Here $H^k(\B, \delta)_{w}$ denotes the component of $H^k(\B, \delta)$ of conformal weight $w$. 
\end{cor}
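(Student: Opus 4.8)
The plan is to read off both cohomology groups directly from the preceding lemma, which identifies $H^\bullet(\B,\delta)$ with the (completed) free graded-commutative algebra $\C[[\pa_t^k\tilde\eta]]$ on the odd generators $\xi_k:=\pa_t^k\tilde\eta$, $k\geq 0$. First I would record the two gradings these generators carry under that identification: since $\tilde\eta$ has cohomological degree $1$, every $\xi_k$ has cohomological degree $1$; and setting $m=0$ in $\cw(\pa_z^m\pa_t^k\tilde\eta)=m-k+1$ gives $\cw(\xi_k)=1-k$. Because the $\xi_k$ are odd, a nonzero monomial in $H^\bullet(\B,\delta)$ is a product $\xi_{k_1}\cdots\xi_{k_r}$ with pairwise distinct indices $k_1,\dots,k_r\geq 0$, and it has cohomological degree $r$ and conformal weight $\sum_i(1-k_i)=r-\sum_i k_i$. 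In particular, for fixed cohomological degree and conformal weight only finitely many monomials can contribute, so the completion plays no role.

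Next I would carry out the two bookkeeping computations. For $H^1(\B,\delta)_1$ one needs $r=1$ together with $1-k_1=1$, forcing $k_1=0$; the unique surviving monomial is $\xi_0=\tilde\eta$, whence $H^1(\B,\delta)_1=\C\tilde\eta$. For $H^2(\B,\delta)_2$ one needs $r=2$ and $2-(k_1+k_2)=2$ with $k_1\neq k_2$ both $\geq 0$, i.e. $k_1+k_2=0$; this has no solution in distinct non-negative integers, and the excluded diagonal value $k_1=k_2=0$ would in any case give $\xi_0^2=0$ by oddness. Hence $H^2(\B,\delta)_2=0$.

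There is no real obstacle here once the lemma is available; the only point requiring care is the conformal-weight normalization of $\tilde\eta$. The shift built into Definition \ref{defn-background} (the reindexing $\tilde\eta=\sum_{k\geq 1}\frac{t^k}{k!}\eta_{k-1}$) is precisely what makes $\cw(\tilde\eta)=1$ rather than $0$, and hence what makes $\xi_0=\tilde\eta$ land in conformal weight $1$; I would double-check this against the conventions $\cw(\star)=0$, $\cw(\delta)=1$ recorded just above, so that the corollary correctly records $H^1$ and $H^2$ in exactly the weights relevant to solving the Fedosov equation $\delta J+\frac12[J,J]_\star=0$ in $\B^1_1$.
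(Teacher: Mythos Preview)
Your proposal is correct and follows essentially the same approach as the paper's own proof: you read off both graded pieces directly from the preceding lemma $H^\bullet(\B,\delta)=\C[[\pa_t^k\tilde\eta]]$ and then do the bookkeeping in degree and conformal weight. The paper's proof is terser---it simply declares the first equality obvious and for the second observes that the only candidate monomial with $\deg=2$ and $\cw=2$ is $\tilde\eta^2$, which vanishes by oddness---but your more explicit accounting of $\cw(\pa_t^k\tilde\eta)=1-k$ and the resulting constraints is exactly the content behind those remarks.
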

\begin{proof} $H^1(\B, \delta)_{1}= \C \tilde \eta$ is obvious. The only possible term in $\C[[\pa_t^k \tilde \eta]]$ with $\deg=2$ and $\cw=2$ is $\tilde \eta^2$, which vanishes by the odd parity of $\tilde \eta$. 
\end{proof}

To solve the above equation, let us introduce an auxiliary grading by 
$$
    T(\pa_z^m\pa_t^k \tilde b)=k, \quad T(\pa_z^m\pa_t^k \tilde \eta)=k,
$$
i.e., $T$ counts the number of $\pa_t$'s. Let us introduce the operator
$$
  \delta^*: \B\to \B, \quad \pa_z^{m+1}\pa_t^k \tilde \eta\to \pa_z^m\pa_t^k \tilde b, (m\geq 0),  \quad \pa_t^k \tilde \eta\to 0. 
$$
Let $N=\delta \delta^*+\delta^*\delta$. We define 
$$
\delta^{-1}: \alpha \to \begin{cases} {1\over m}\delta^*\alpha & \text{if}\ N \alpha=m\alpha\\ 0 & \text{if}\ N\alpha=0  \end{cases}
$$
Then $\delta^{-1}$ can be viewed as a homotopic inverse of $\delta$, where $1-[\delta,\delta^{-1}]$ is the projection to $\C[[\pa_t^k \tilde \eta]]\iso H^\bullet(\B, \delta)$. 

\begin{lem}\label{lem-J} There exists a unique $J^B\in \B^1_1$ satisfying
\begin{enumerate} 
\item $\delta J^B+{1\over 2}\bbracket{J^B, J^B}=0$;
\item $\lim_{\lambda\to 0}\lambda^T(J^B)=\tilde \eta$, i.e., the leading degree $0$ component of $J^B$ under the $T$-grading is $\tilde \eta$;
\item $\delta^{-1}J^B=0$.
\end{enumerate}
\end{lem}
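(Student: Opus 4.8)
The plan is to construct $J^B$ by induction on the auxiliary grading $T$ (the number of $\pa_t$'s), using the contracting homotopy $\delta^{-1}$ to pin down a unique solution, exactly as in Fedosov's iterative construction of an abelian connection. First I would write $J^B = \sum_{k\geq 1} J_k$ where $J_k$ is the component with $T(J_k)=k$, and observe that condition (2) forces $J_1 = \tilde\eta$ (this is the unique element of $\B^1_1$ with $T=1$, since $\pa_z\tilde b$ also has $\cw=1$ and $T=0$, while $\tilde\eta$ itself has $\cw=1$, $T=1$; one checks the $\cw=1$, $\deg=1$, $T=1$ piece is one-dimensional). Note that $\delta$ lowers $T$ by $1$ while $\bbracket{-,-}_\star$ — being built from $\pa_z\pa_t$ in the Moyal product and from $\{-,-\}=\pa_tF\pa_zG-\pa_zF\pa_tG$ — also interacts with $T$ in a way that makes the equation triangular: the $T=n$ component of $\delta J^B + \tfrac12\bbracket{J^B,J^B}$ reads $\delta J_{n+1} + (\text{terms involving only }J_{\leq n}) = 0$.

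Second, I would set up the induction. Suppose $J_1,\dots,J_n$ have been constructed so that $\delta J^B+\tfrac12\bbracket{J^B,J^B}=0$ holds modulo terms of $T$-degree $\geq n$, and so that $\delta^{-1}J_k=0$ for $k\leq n$. Write $R_n$ for the $T$-degree-$n$ part of the curvature obstruction, built from $J_1,\dots,J_n$; then the next equation is $\delta J_{n+1} = -R_n$. I would first verify $\delta R_n = 0$ (a Bianchi-type identity following from $\delta^2=0$, the graded Jacobi identity for $\bbracket{-,-}_\star$, and the DGLA compatibility $\delta\bbracket{u,v}_\star = \bbracket{\delta u,v}_\star \pm \bbracket{u,\delta v}_\star$ from the earlier Lemma), and also check that $R_n$ lies in $\B^2_2$ so that its class lives in $H^2(\B,\delta)_2$, which vanishes by the Corollary. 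Hence $R_n$ is $\delta$-exact, and setting $J_{n+1} := -\delta^{-1}R_n$ gives a solution; uniqueness of this choice among solutions with $\delta^{-1}J_{n+1}=0$ follows because $\ker\delta \cap \im\delta^{-1} = 0$ (from $1-[\delta,\delta^{-1}]$ being the projection onto $H^\bullet(\B,\delta)\iso\C[[\pa_t^k\tilde\eta]]$, which contains no element of $\cw=2$, $\deg=1$ other than $0$). I would also confirm $\cw(J_{n+1})=1$: since $\cw(\delta)=1$ and $\cw(R_n)=2$, and $\delta^{-1}$ shifts $\cw$ by $-1$, the output sits in $\B^1_1$ as required.

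Third, for global uniqueness of $J^B$: any two solutions satisfying (1)–(3) agree in $T$-degree $1$ by (2), and then the same triangular recursion together with the uniqueness at each step (forced by (3)) shows they agree in all $T$-degrees. I would remark that the iteration converges in the natural topology on $\B=\C[[\pa_z^\bullet\pa_t^\bullet\tilde b,\pa_z^\bullet\pa_t^\bullet\tilde\eta]]$ since only finitely many $J_k$ contribute to any fixed monomial (each application of $\delta^{-1}$ or $\bbracket{-,-}_\star$ strictly raises a suitable filtration).

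The main obstacle I anticipate is bookkeeping the interplay of the three gradings — cohomology degree, conformal weight $\cw$, and the auxiliary $T$-grading — to guarantee simultaneously that (a) the recursion is genuinely triangular in $T$, (b) each $R_n$ lands precisely in the slot $\B^2_2$ where the obstruction cohomology vanishes, and (c) the homotopy $\delta^{-1}$ returns an element of $\B^1_1$ and not some other weight. Once the degree accounting is in place, the vanishing $H^2(\B,\delta)_2=0$ does all the real work and the rest is the standard Fedosov fixed-point argument.
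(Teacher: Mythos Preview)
Your strategy is exactly the paper's: induct on the $T$-grading, use $\delta^{-1}$ to pick out the canonical antiderivative, and invoke $H^2(\B,\delta)_2=0$ for existence and $H^1(\B,\delta)_1=\C\tilde\eta$ for uniqueness at each step. The bookkeeping you flagged as the main obstacle is precisely where the proposal slips, and it needs to be fixed before the argument goes through.

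Specifically: (i) $T(\tilde\eta)=0$, not $1$, since $T$ counts the number of $\pa_t$'s and $\tilde\eta$ carries none; condition (2) says the $T=0$ component of $J^B$ is $\tilde\eta$, so the iteration starts at $J_{(0)}=\tilde\eta$ and the sum runs over $k\geq 0$. (ii) $\delta$ \emph{preserves} $T$ rather than lowering it, since $\delta(\pa_z^m\pa_t^k\tilde b)=\pa_z^{m+1}\pa_t^k\tilde\eta$; the triangularity comes instead from the fact that every term of the Moyal commutator carries at least one $\pa_t$, so $[-,-]_\star$ raises $T$ by at least $1$. Thus the $T$-degree $k$ component of the Maurer--Cartan equation reads $\delta J_{(k)}=-\tfrac12[J_{(<k)},J_{(<k)}]\big|_{(k)}$, not $\delta J_{(k+1)}=\cdots$. (iii) In the uniqueness step, the relevant space is $\ker\delta\cap\ker\delta^{-1}$ (the harmonics $\C[[\pa_t^\bullet\tilde\eta]]$), not $\ker\delta\cap\im\delta^{-1}$: the difference $U$ of two candidate solutions at stage $k$ lies there with $\deg(U)=1$, $\cw(U)=1$, $T(U)=k>0$, and the only harmonic with $\deg=1$, $\cw=1$ is $\tilde\eta$, which has $T=0$ --- forcing $U=0$. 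With these three corrections in place your outline coincides with the paper's proof.
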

\begin{proof} Let us decompose $J^{B}=\sum\limits_{k\geq 0}J_{(k)}$, where $T(J_{(k)})=kJ_{(k)}$. The initial condition (2) is $J_{(0)}=\tilde \eta$.  We show that other $J_{(k)}$'s can be uniquely solved satisfying conditions (1) and (3). Let us denote $J_{(<k)}=\sum\limits_{0\leq i<k}J_{(i)}$. Suppose we have solved $J_{(<k)}$. The $J_{(k)}$ we are looking for satisfies 
$$
   \delta J_{(k)}=\left.-{1\over 2}\bbracket{J_{(<k)}, J_{(<k)}}\right |_{(k)}. 
$$  
Here the subscript $|_{(k)}$ on the right hand side means the component containing $k$ $\pa_t$'s (i.e. $T$-eigenvalue $k$). By the standard deformation theory argument, $\left.-{1\over 2}\bbracket{J_{(<k)}, J_{(<k)}}\right |_{(k)}$ is annihilated by $\delta$ and is of conformal weight $2$. Then the existence of $J_{(k)}\in \B^1_1$ follows from $H^2(\B, \delta)_2=0$. In particular, 
$$
   J_{(k)}=\delta^{-1}\bracket{\left.-{1\over 2}\bbracket{J_{(<k)}, J_{(<k)}}\right |_{(k)}}. 
$$
solves equation (1) and (3) up to $J_{<(k+1)}$. 

Assume $J_{(k)}+U$ is another solution, then $U$ satisfies 
$$
  \deg(U)=1, \cw(U)=1, T(U)=k>0, \quad \text{and}\ \delta U=\delta^{-1}U=0. 
$$
Since $H^1(\B, \delta)_1$ is spanned by $\tilde \eta$,  while $T(\tilde \eta)=0$. It follows that $U=0$. 
\end{proof}

\subsection{Exact solution of quantum BCOV theory}

\begin{defn}\label{defn-I} Let $J^B$ be in Lemma \ref{lem-J}.  We denote $\Phi(J^{B})=\oint dz I^B$. 
\end{defn}

By Proposition \ref{prop-Phi} and Lemma \ref{lem-J}, $\oint dz I^B$ defines a Maurer-Cartan element of $\V[[\h^*]]$. To justify that $\oint dz I^B$ indeed defines a quantization of our BCOV theory, we are left to check the following two properties:
\begin{enumerate}
\item\ [Integrality]: only terms with even number of derivatives contributes to $\oint dz I^B$. 
\item\ [Classical limit]:  the term in $\oint dz I^B$ containing no derivatives coincide with our classical BCOV interaction. 
\end{enumerate}

Property (1) on integrality comes from our discussion in Section \ref{section-dilaton} on dilaton dimension.  A term with $m$ holomorphic derivatives contributes to $\hbar^{m/2}$, while we only allow integer powers of $\hbar$ to appear in our quantization. 

Property (2) is just about the classical limit. 

We will explicitly check (1) and (2) below. 

\begin{rmk} Half integer powers of $\hbar$ appear naturally when open strings are included. In \cite{open-closed}, we have also developed an open-closed BCOV theory. The terms in $J^B$ with odd number of holomorphic derivatives will be total derivatives. They vanish upon integration, but may couple nontrivially with open string sectors. It would be extremely interesting to see how open string would play into a role here. 
\end{rmk}

\subsubsection{Integrality} Let us consider the following transformation
$$
   R: \B \to \B, \quad \pa_z^k\pa_t^m \tilde b\to (-\pa_z)^k\pa_t^m \tilde b, \ \ \pa_z^k\pa_t^m \tilde \eta\to (-\pa_z)^k\pa_t^m\tilde \eta,  
$$
i.e., $R$ is the reflection $\pa_z\to -\pa_z$. It is easy to check that $R(J^B)$ also satisfies (1)(2)(3) in Lemma \ref{lem-J}. It follows from the uniqueness that 
$$
  R(J^B)=J^B. 
$$
Let us identify $b_0=\pa_z\phi$ as in Section \ref{section-BF}. Then 
\begin{align*}
  \Phi(J^B)&=\sum_{k\geq 0}{1\over k+1} \oint dz W^{(k+1)}(b_0) \oint dt t^{-k-1}e^{{1\over 2}\pa_z\pa_t} J^B \\
  &=\oint dz \sum_{k\geq 0}{W^{(k+1)}(b_0)\over (k+1)!}\oint {dt\over t} \pa_t^k e^{{1\over 2}{ \pa_z}{\pa_t}} J^B\\
  &=\oint dz  \oint {dt\over t} {:e^{\phi(z+\pa_t)-\phi(z)}:_B-1\over \pa_t}e^{{1\over 2}\pa_z\pa_t} J^B\\
   &=\oint dz  \oint {dt\over t} e^{{1\over 2}\pa_z\pa_t}\bracket{ {:e^{\phi(z+{1\over 2}\pa_t)-\phi(z-{1\over 2}\pa_t)}:_B-1\over \pa_t} J^B}\\
      &=\oint dz  \oint {dt\over t}  {:e^{\phi(z+{1\over 2}\pa_t)-\phi(z-{1\over 2}\pa_t)}:_B-1\over \pa_t} J^B.
\end{align*}
Here we have formally identified $\phi(z+\pa_t)=\sum\limits_{k\geq 0}\pa_z^k\phi {\pa_t^k\over k!}$ in the above manipulation and used the fact that the operator $e^{{1\over 2}\pa_t\pa_z}$ amounts to shifting
$
 z\to z+{1\over 2}\pa_t. 
$
In the last line, we have thrown away terms which are total derivatives in $z$. Now
$
\phi(z+{1\over 2}\pa_t)-\phi(z-{1\over 2}\pa_t)
$
contains only even number of $\pa_z$'s in terms of the field $b_0=\pa_z\phi$. From $R(J^B)=J^B$, we know that $J^B$ also contains only even number of $\pa_z$'s. Therefore $\oint dz I^B=\Phi(J^B)$ satisfies Property (1) on integrality.

\subsubsection{classical limit} We check that the classical limit  $\oint dz I_0^B$ of $\oint dz I^B$ coincides with our classical BCOV interaction. By dilaton dimension, the classical limit is related to the component $J_0^B$ of $J^B$ which does not involve any $\pa_z$ and satisfies 
$$
  \delta J_0^B+{1\over 2}\{J_0^B, J_0^B\}=0,
$$
where $\{-,-\}$ is the Poisson bracket
$
  \{A, B\}=\bracket{\pa_t A \pa_z B-\pa_z A \pa_t B}. 
$
Smilarly, $J^B$ is uniquely determined by further imposing conditions (2)(3) in Lemma \ref{lem-J}.
\begin{lem}
$$
   J_0^B=\tilde \eta+\sum_{k\geq 1}{\pa_t^{k-1}\over k!} \bracket{\tilde b^k \pa_t \tilde \eta}.
$$
\end{lem}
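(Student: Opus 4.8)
The plan is to verify that the series on the right-hand side satisfies the three properties that, by the classical (Poisson-bracket) analogue of Lemma \ref{lem-J}, characterize $J_0^B$ uniquely: the Maurer--Cartan equation $\delta J_0^B+\frac12\{J_0^B,J_0^B\}=0$, the initial condition $\lim_{\lambda\to 0}\lambda^T(J_0^B)=\tilde\eta$, and the gauge-fixing condition $\delta^{-1}J_0^B=0$. The uniqueness itself is proved exactly as in Lemma \ref{lem-J}: decompose by the $T$-grading, observe that $\{\,,\,\}$ raises $T$ by one so that the recursion $J_{(k)}=\delta^{-1}\bigl(-\frac12\{J_{(<k)},J_{(<k)}\}\big|_{(k)}\bigr)$ closes, and use $H^2(\B,\delta)_2=0$ for existence and $H^1(\B,\delta)_1=\C\tilde\eta$, with $T(\tilde\eta)=0$, for uniqueness.

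The step that makes everything transparent is to recognize the right-hand side in closed form by Lagrange--B\"urmann inversion. Viewing $\tilde b,\tilde\eta$ as generic formal functions of $t$ (as in the discussion following Definition \ref{defn-background}), let $w=w(z,t)$ be the functional inverse in the $t$-slot of $s\mapsto s-\tilde b(s)$, i.e. $w-\tilde b(w)=t$; differentiating gives $\pa_t w=\bigl(1-(\pa_t\tilde b)(w)\bigr)^{-1}$. The Lagrange--B\"urmann formula, applied with $f=\pa_t\tilde\eta$, reads
$$
\sum_{k\geq0}\frac1{k!}\,\pa_t^k\!\bigl(\tilde b^{k}\,\pa_t\tilde\eta\bigr)
=\frac{(\pa_t\tilde\eta)(w)}{1-(\pa_t\tilde b)(w)}
=\pa_t\bigl(\tilde\eta(w)\bigr).
$$
Since the left-hand side is $\pa_t J_0^B$, while both $J_0^B$ and $\tilde\eta(w)$ reduce to $\tilde\eta$ when $\tilde b=0$ and the kernel of $\pa_t$ on $\B$ consists only of constants, we conclude $J_0^B=\tilde\eta(w)$.

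With this description, conditions (2) and (3) are immediate. Expanding $\tilde\eta(w)=\sum_{k\geq0}\frac{(w-t)^k}{k!}\pa_t^k\tilde\eta$, the factor $\pa_t^k\tilde\eta$ carries $T$-degree $k$ while $w-t=\tilde b(w)$ has nonnegative $T$-degree, so only the $k=0$ term contributes in $T$-degree $0$, giving $\tilde\eta$; and every monomial of $\tilde\eta(w)$ is a product of $\pa_t$-only jets of $\tilde b$ and $\tilde\eta$, all annihilated by the derivation $\delta^*$, so $\delta^{-1}J_0^B=0$. For the Maurer--Cartan equation I would use $\delta\tilde\eta=0$ and $\delta\tilde b=\pa_z\tilde\eta$: applying the chain rule to $w-\tilde b(w)=t$ gives $\delta w=(\pa_z\tilde\eta)(w)\big/\bigl(1-(\pa_t\tilde b)(w)\bigr)$, hence $\delta J_0^B=(\delta w)(\pa_t\tilde\eta)(w)$; on the other hand $\pa_t J_0^B=(\pa_t\tilde\eta)(w)\big/\bigl(1-(\pa_t\tilde b)(w)\bigr)$ and $\pa_z J_0^B=(\pa_z\tilde\eta)(w)+(\pa_t\tilde\eta)(w)\,\pa_z w$, and in $\frac12\{J_0^B,J_0^B\}=(\pa_t J_0^B)(\pa_z J_0^B)$ (valid since $\{A,A\}=2\,\pa_t A\,\pa_z A$ for odd $A$) the term proportional to $\pa_z w$ carries the square of the odd element $(\pa_t\tilde\eta)(w)$ and therefore vanishes. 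What remains equals $-\delta J_0^B$, because the two odd factors $(\pa_t\tilde\eta)(w)$ and $(\pa_z\tilde\eta)(w)$ appear in the opposite order; thus $\delta J_0^B+\frac12\{J_0^B,J_0^B\}=0$.

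The main obstacle is the bookkeeping: on one hand, making the Lagrange--B\"urmann identification precise inside the differential-polynomial ring $\B$ (equivalently, in its realization by generic $t$-series, as the paper already does tacitly); on the other hand, tracking the Koszul signs in the Maurer--Cartan check, since the cancellation $\delta J_0^B=-\frac12\{J_0^B,J_0^B\}$ is entirely a sign effect coming from the order in which the two odd factors occur. A more elementary route that bypasses the closed form is to verify the order-by-order recursion $J_{(k)}=\delta^{-1}\bigl(-\frac12\{J_{(<k)},J_{(<k)}\}\big|_{(k)}\bigr)$ directly against the series, using $\frac1{n!}\pa_t^n(fg)=\sum_{a+b=n}\frac1{a!}\pa_t^a f\cdot\frac1{b!}\pa_t^b g$ to recombine the sums and the graded antisymmetry of the $\tilde\eta$-factors to kill the spurious terms involving $\pa_z\tilde b$; this works but is combinatorially heavier.
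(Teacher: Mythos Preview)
Your proof is correct and follows essentially the same route as the paper: both recognize the series as the composition $\tilde\eta(z,w)$ where $w$ solves $w-\tilde b(z,w)=t$ (the paper writes this as $\lambda=\tilde b(z,t+\lambda)$ with $w=t+\lambda$, via a residue computation; you name it as Lagrange--B\"urmann inversion), then verify the Maurer--Cartan equation by chain rule and the odd parity of $\tilde\eta$, and invoke uniqueness from the classical analogue of Lemma~\ref{lem-J}. Your account of the Maurer--Cartan cancellation is in fact a bit more explicit than the paper's, spelling out both the anticommutation of $(\partial_t\tilde\eta)(w)$ with $(\partial_z\tilde\eta)(w)$ and the vanishing of $\bigl((\partial_t\tilde\eta)(w)\bigr)^2$.
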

\begin{proof}  Let $\hat J_0^B=\tilde \eta+\sum\limits_{k\geq 1}{\pa_t^{k-1}\over k!} \bracket{\tilde b^k \pa_t \tilde \eta}$. Let us first rewrite the above formula as
\begin{align*}
   \pa_t\hat J^B_0&=\sum_{k\geq 0} {\pa_t^k\over k!} (\tilde b^k \pa_t \tilde \eta)=\oint_0 {d\lambda\over \lambda} e^{\lambda\pa_t}{1\over 1-\lambda^{-1}\tilde b}\pa_t \tilde \eta \\
   &=\oint_0 d\lambda {\pa_t \tilde \eta(z,t+\lambda)\over \lambda-\tilde b(z,t+\lambda)}=\left. {\pa_t \tilde \eta(z,t+\lambda)\over 1-\pa_t \tilde b(z,t+\lambda)}\right |_{\lambda=\tilde b(z,t+\lambda)}. 
\end{align*}
Here the substitution $\lambda=\tilde b(z,t+\lambda)$ means solving $\lambda=\sum\limits_{k\geq 0}{\lambda^k\over k!}\pa_t^k\tilde b$ for $\lambda$ as a power series in $\pa_t^k \tilde b$, then plugging $\lambda$ into $\pa_t\tilde \eta(z,t+\lambda)=\sum\limits_{k\geq 0}{
\lambda^k\over k!}\pa_t^{k+1}\tilde \eta$ and $\pa_t\tilde b(z,t+\lambda)=\sum\limits_{k\geq 0}{
\lambda^k\over k!}\pa_t^{k+1}\tilde b$. This implies via a simple chain rule computation
$$
  \hat J_0^B=\left. {\tilde \eta(z,t+\lambda)}\right |_{\lambda=\tilde b(z,t+\lambda)}.
$$

Similar computations lead to 
\begin{align*}
  \delta \hat J_0^B&=\left. {{\pa_z\tilde \eta(t+\lambda) \pa_t \tilde \eta(t+\lambda)} \over 1-\pa_t \tilde b(t+\lambda)}\right |_{\lambda=\tilde b(t+\lambda)}\\
  \pa_z \hat J^B_0&=\left. \bracket{\pa_z\tilde \eta(t+\lambda)+{\pa_z\tilde b(t+\lambda)\over 1-\pa_t \tilde b(t+\lambda)}\pa_t \tilde \eta(t+\lambda)}\right |_{\lambda=\tilde b(t+\lambda)}.
\end{align*}
It follows that
\begin{align*}
\delta \hat J_0^B+{1\over 2}\fbracket{\hat J_0^B, \hat J_0^B}=\delta \hat J_0^B+\pa_t \hat J_0^B \pa_z \hat J_0^B=\left. {{\pa_z\tilde b(t+\lambda)\over \bracket{1-\pa_t \tilde b(t+\lambda)}^2}\bracket{\pa_t \tilde \eta(t+\lambda)}^2}\right |_{\lambda=\tilde b(t+\lambda)}=0,
\end{align*}
where we have used the odd parity of $\tilde\eta$. Also, $\hat J_0^B=0$ satisfies condition (2)(3) in Lemma \ref{lem-J}. It follows from the uniqueness that $\hat J_0^B=J_0^B$. 
\end{proof}
\begin{cor}\label{cor-classical-BCOV}
$\oint dz I_0^B$ coincides with the classical BCOV interaction. In particular, $\oint dz I^B$ is a quantization of the classical BCOV theory satisfying the Hodge weight condition and dilaton equation. 
\end{cor}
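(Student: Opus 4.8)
The plan is to compute the classical ($\hbar^0$) part of $\Phi(J^B)$ explicitly and recognize it as the local functional $I^{BCOV}$ of Definition \ref{defn-BCOV}, and then to read off the ``in particular'' from the properties already assembled.

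First I would isolate the derivative-free part of $\Phi(J^B)$. By the dilaton-dimension count of Section \ref{section-dilaton} together with the integrality just established, the $\hbar^0$-part of $\oint dz\,I^B=\Phi(J^B)$ is exactly the part containing no holomorphic derivative $\pa_z$. In the formula of Definition \ref{defn-Phi}, the operator $e^{\frac12\pa_z\pa_t}$ acts as the identity modulo terms carrying a $\pa_z$, the field $W^{(k+1)}(b_0)$ reduces modulo holomorphic derivatives to the monomial $b_0^{k+1}$ (normal ordering being irrelevant under the polynomial identification $\V[[\h^*]]\iso\C[[\pa_z^\bullet a^i]]$), and $J^B$ reduces modulo $\pa_z$ to its lowest component $J_0^B$, computed in the preceding lemma. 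Since $\frac1{k+1}b_0^{k+1}=\int_0^{b_0}x^k\,dx$, summing the resulting series over $k$ gives
$$\oint dz\,I_0^B \;=\; \oint dz\ \int_0^{b_0} J_0^B(z,x)\,dx,$$
where the generators $\tilde b,\tilde\eta$ of $\B$ are replaced by the background fields of Definition \ref{defn-background} and $J_0^B(z,x)$ denotes $J_0^B$ with $t$ set to $x$.

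Next I would evaluate this integral. Using the closed form $J_0^B(z,x)=\tilde\eta(z,x+\lambda)\big|_{\lambda=\tilde b(z,x+\lambda)}$ from the proof of the preceding lemma, I substitute $s=x+\lambda$ (a change of variables that amounts to Lagrange inversion), so that $x=s-\tilde b(z,s)$, $dx=(1-\pa_s\tilde b(z,s))\,ds$; the lower limit becomes $s=0$ and the upper limit becomes $s=u$, where $u$ is the unique small solution of $u=b_0+\tilde b(z,u)=\sum_{k\ge0}\frac{u^k}{k!}b_k$, i.e. the genus-zero string equation. This yields a closed expression for $\oint dz\,I_0^B$ in terms of $u$ and the fields. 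On the other hand $I^{BCOV}$ corresponds to the vertex operator $I_0(b,\eta)=\abracket{e^{b}\otimes\eta}_0$, which, being linear in $\eta$, equals $\sum_k\eta_k\,\pa_{t_k}F_0\big|_{t_\bullet=b_\bullet}$, where $F_0=\sum_n\tfrac1{n!}\abracket{\mathbf t^{\otimes n}}_0$ is the genus-zero descendant potential of the point, so that $\pa_{t_0}^2F_0=u$. The identification of the two sides then comes down to the classical generating-function identity for $F_0$ (equivalently for the $\pa_{t_k}F_0$) in terms of the string-equation solution $u$; I would verify it either directly from that closed formula or by checking that both expressions obey the genus-zero string and dilaton recursions for the coefficients $\binom{n-3}{k_1,\dots,k_n}$. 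Since $\oint dz$ annihilates total $z$-derivatives, the two agree, which proves the first assertion.

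Finally, for the ``in particular'': $\oint dz\,I^B=\Phi(J^B)$ is a Maurer--Cartan element by Proposition \ref{prop-Phi} and Lemma \ref{lem-J}, hence a solution of the renormalized quantum master equation of BCOV theory by the modification of Theorem \ref{main-thm} noted earlier in this section; integrality was established above and the classical limit is $I^{BCOV}$ by the previous step. The grading conditions $\deg=1$, $\cw=2$, $\dim=0$ — equivalently the Hodge-weight condition and the dilaton equation, by Section \ref{section-dilaton} — follow because $\Phi$ preserves cohomological degree and conformal weight with $J^B\in\B^1_1$ while $\oint dz$ carries conformal weight $-1$, the dilaton dimension being fixed by the even-derivative (integrality) structure and the normalization of $\hbar$. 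The one genuine obstacle is the combinatorial matching in the middle step; the rest is bookkeeping with the limit and grading operations.
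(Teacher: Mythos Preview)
Your plan is sound and would succeed, but it takes a genuinely different route from the paper. The paper's argument is a direct four-line series expansion: starting from the derivative-free truncation
\[
\oint dz\, I_0^B=\sum_{k\geq 1}\frac{1}{k}\oint dz\, b_0^{k}\,\oint dt\, t^{-k}\,J_0^B,
\]
one substitutes the explicit series $J_0^B=\tilde\eta+\sum_{m\geq 1}\frac{\pa_t^{m-1}}{m!}(\tilde b^{\,m}\pa_t\tilde\eta)$ together with $\tilde b=\sum_{k\geq 1}\frac{t^k}{k!}b_k$ and $\pa_t\tilde\eta=\sum_{l\geq 0}\frac{t^{l}}{l!}\eta_l$, reads off the $t^{k-1}$-coefficient via the residue, and recognizes the resulting combinatorial factor as the multinomial $\binom{n-3}{k_1,\dots,k_n}$ defining $\abracket{e^b\otimes\eta}_0$. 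No integral interpretation, change of variables, or reference to $F_0$ is needed.

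Your integral rewriting $\oint dz\int_0^{b_0}J_0^B(z,x)\,dx$ and the Lagrange-type substitution $s=x+\lambda$ are correct and nicely expose the string equation $u=\sum_k u^k b_k/k!$. The step you flag as an obstacle is in fact short once you differentiate both sides in $b_0$. On your side, $\pa_{b_0}\int_0^{b_0}J_0^B(z,x)\,dx=J_0^B(z,b_0)=\tilde\eta(z,u)=\sum_l\eta_l\,u^{l+1}/(l+1)!$, since $b_0+\lambda$ solves the string equation. On the BCOV side, $\pa_{b_0}\sum_l\eta_l\,\pa_{b_l}F_0=\sum_l\eta_l\,\pa_{b_0}\pa_{b_l}F_0$, and the standard identity $\pa_{t_0}\pa_{t_l}F_0=u^{l+1}/(l+1)!$ (proved by differentiating $u=\sum_k u^k t_k/k!$ in $t_l$ and comparing with $\pa_{t_0}(u^{l+1}/(l+1)!)$) closes the match; both sides vanish at $b_0=0$ because every nonzero correlator $\binom{n-3}{k_1,\dots,k_n}$ requires at least three index-zero insertions. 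So your route works, but the paper's expansion is quicker and entirely self-contained. Your handling of the ``in particular'' clause is correct and agrees with the paper's reasoning.
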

\begin{proof}
The above lemma allows us to compute $\oint dzI^B_0$ explicitly 
\begin{align*}
   \oint dz I_0^B&=\sum_{k>0}{1\over k}\oint dz b_0^{k} \oint dt t^{-k} J_0^B\\
    &=\oint dz\sum_{k,l\geq0, k_i>0}{(k-1)! \over k_1!\cdots k_m!l!}{b_0^k b_{k_1}\cdots b_{k_m}\eta_l\over k!}\oint dt t^{-k} \pa_t^{m-1}\bracket{t^{k_1+\cdots +k_m+l}}\\
    &=\oint dz\sum_{\substack{k,l\geq0, k_i>0 \\ k_1+\cdots+k_m+l=k+m-2}}{(k_1+\cdots+k_m+l)!\over k_1!\cdots k_m! l!} {b_0^k b_{k_1}\cdots b_{k_m}\eta_l\over k!}\\
    &=\oint dz \abracket{e^{b}\otimes \eta}_0.
\end{align*}
\end{proof}

\subsection{Higher genus mirror symmetry} 
We briefly explain how our exact solution of quantum BCOV theory on elliptic curves is related to the Gromov-Witten theory on mirror elliptic curves. For simplicity, let us consider the following subsector of our fields in BCOV theory on elliptic curves
$$
   b_{>0}=0, \quad \eta_k=\text{constant}. 
$$
We call this the \emph{stationary sector}. This is a mirror notion (introduced in \cite{L-elliptic}) of that in \cite{virasoro} which determines the full generating function with the help of Virasoro constraint. 

From our construction, the restriction of $\oint dz I^B$ to our stationary sector, denoted by $\oint dz I^S$, is given by 
$$
  \oint dz I^S=\Phi(\tilde \eta)=\sum_{k\geq 0}\oint dz {W^{(k+2)}\over k+2} \eta_k.
$$
The quantum master equation in the stationary sector becomes 
$$
   \bbracket{\oint dz I^S, \oint dz I^S}=0. 
$$
Expanding the coefficients $\eta_k$'s, this is equivalent to 
$$
   \bbracket{\oint dz {W^{(k+2)}\over k+2}, \oint dz {W^{(m+2)}\over m+2}}=0, \quad k,m\geq 0,
$$
which represents infinite number of commuting Hamiltonians. 

\begin{rmk}
The classical limit of $\oint dz {W^{k+2}\over k+2}$ gives rise to dispersionless KdV integrable hierarchy. This observation motivates a general proposal \cite{L-review} to study integrable hierarchy from reducing topological B-model on $X\times E$ to chiral algebras on $E$ for general Calabi-Yau geometry $X$. 
\end{rmk}

Consider the following charactor, which plays the role of generating function of stationary sector of our quantum BCOV theory on elliptic curves
$$
\Tr_{\mathcal H} q^{L_0-{1\over 24}}e^{{1\over \hbar}\sum\limits_{k\geq 0}\oint dz \eta_k {W^{(k+2)}\over k+2}}, \quad q=e^{2\pi i\tau}.
$$
Here  $\mathcal H$ is the Heisenberg vertex algebra generated by $b_0$, and $L_0$ is the conformal weight operator. Then it is observed that this trace coincides with the generating function of A-model Gromov-Witten computation \cite{virasoro} under the boson-fermion correspondence. This gives a simple explanation of the explicit computations in \cite{L-elliptic} and can be viewed as a full generalization of \cite{Dijkgraaf-elliptic} who considers the cubic interaction $W^{(3)}$.


\appendix

\section{Analytic results on Feynman graph integrals}
In this appendix, we collect several analytic results for Feynman graph integrals on $\C$ that are established in \cite{Li-modular} and essentially used in this paper. 

Let $z$ be the linear holomorphic coordinate on $\C$.  Let
$$
k_t(z,\bar z)= {1\over 4\pi t}e^{-|z|^2/4t}
$$
be the standard heat kernel on $\C$. We denote the following time integration by
$$
          H_{\epsilon}^{L}(z,\bar z)=\int_\epsilon^L {dt\over 4\pi t}e^{-|z|^2/4t}, \quad 0<\epsilon<L<\infty. 
$$
The regularized propagators that appear in this paper are related to holomorphic derivatives of $H_{\epsilon}^{L}$. For example, the analytic part of the regularized propagator for the $\beta\gamma$ and $bc$ systems in \ref{sec:regularization} is essentially $\pa_z H_{\epsilon}^{L}$, and that for chiral bosons (BCOV theory) in Section \ref{sec:BCOV-elliptic} is essential $\pa_z^2 H_{\epsilon}^{L}$. 
\subsection*{Ultra-violet finiteness}

We will be mainly interested in Feynman graph integrals for translation invariant chiral interactions. Such a graph integral involves propagators as described above, and several holomorphic derivatives that appears in the vertex of the chiral interaction. To treat them uniformly, let us define the following decorated graph integral. 

Let $\Gamma$ be a directed graph. Let $V(\Gamma)$ be the set of vertices, $E(\Gamma)$ be the set of edges, and
$$
 t, h: E\to V
$$
be the assignments of tail and head to each directed edge. We will consider the decorated graph
$$
   \bracket{\Gamma, n}\equiv (\Gamma, \fbracket{n_e}_{e\in E})
$$
where the decoration is given by
$$
    n: E(\Gamma)\to \Z^{\geq 0}, \ \ e\to n_e
$$
which associates each edge a non-negative integer.  Each $n_e$ indicates how many holomorphic derivatives of $\pa_z$ are put on the propagator associated to the edge $e$. 

We will assume that $\Gamma$ is connected without self-loops (tadpoles, i.e. edges that connect a vertex to itself). Thess are the only graphs that appear in this paper. If we work with a curved model, self-loops  will be relevant. 

We consider the following graph integral on $\C$
$$
   W_{(\Gamma,n)}(H_\epsilon^L,\Phi)\equiv\prod_{v\in V(\Gamma)} \int_{\C}  d^2z_v
    \left(\prod_{e\in E(\Gamma)}\pa^{n_e}_{z_e}H_\epsilon^L(z_e,\bar z_e) \right)
   \Phi, \ \ \mbox{where}\ z_e=z_{h(e)}-z_{t(e)}
$$
here $\Phi$ is a smooth function on $\mathbb C^{|V(\Gamma)|}$ with compact support. In the above integral, we view $H_\epsilon^L(z_e,\bar z_e)$ as propagators associated to the edge $e\in E$, and we have only holomorphic derivatives on the propagators.

\begin{prop} \cite[Prop B.1]{Li-modular}\label{finiteness lem}
 The following limit  exists for the above graph integral
\begin{eqnarray*}
   \lim_{\epsilon\to 0} W_{(\Gamma,n)}(H_\epsilon^L,\Phi).
\end{eqnarray*}
\end{prop}

This proposition shows that chiral deformations of combinations of free $\beta\gamma, bc$ and chiral bosons are free of ultra-violet divergence. 

\subsection*{Quantum master equation at the UV}
Once we know that the theory is ultra-violet finite, we can deduce the meaning of the renormalized quantum master equation in the  ultra-violet limit $L\to 0$. We next discuss the relevant analytic results. 

Let us denote in this subsection 
$$
 R_\epsilon^L(z,\bar a)=\pa_zH_{\epsilon}^{L}(z,\bar z).
$$

Let $(\Gamma, n)$ be a connected decorated graph without self-loops. We index the set of vertices by
$$
    v:\{1,2,\cdots, V\}\to V(\Gamma)
$$
and index the set of edges by
$$
   e: \{0,1,2,\cdots, E-1\}\to E(\Gamma)
$$
such that $e(0),e(1),\cdots,e(k)\in E(\Gamma)$ are all the edges connecting $v(1),v(V)$. We consider the following Feynman graph integral by putting $K_\epsilon$ on $e(0)$, putting $H_\epsilon^L$ on all the other edges, and putting a smooth function $\Phi$ on $\C^{|V(\Gamma)|}$ with compact support  for the vertices. We would like to compute the following limit of the graph integral
$$
    \lim_{\epsilon\to 0}\prod_{i=1}^{V}\int d^2z_i
\pa^{n_0}_{z_{e(0)}}k_\epsilon(z_{e(0)}, \bar z_{e(0)})\left(\prod\limits_{i=1}^{E-1}\pa^{n_i}_{z_{e(i)}}R_\epsilon^L(z_{e(i)}, \bar z_{e(i)})\right)\Phi
$$
where we use the notation that
$$
     z_e\equiv z_{i}-z_{j}, \ \ \mbox{if}\ h(e)=v(i), t(e)=v(j)
$$
\begin{prop}\label{deformed-bracket}
The above limit exists and we have the identity
\begin{align*}
\begin{split}
& \lim_{\epsilon\to 0}\prod_{i=1}^{V}\int d^2z_i
\pa^{n_0}_{z_{e(0)}}k_\epsilon(z_{e(0)}, \bar z_{e(0)})\left(\prod\limits_{i=1}^{E-1}\pa^{n_i}_{z_{e(i)}}R_\epsilon^L(z_{e(i)}, \bar z_{e(i)})\right)\Phi\\
=&\lim_{\epsilon\to 0}{C(n_0;n_1,\cdots,n_k)\over (4\pi)^k}\prod_{i=2}^{V}\int d^2z_i \left.\pa_{z_1}^{n_0+\sum\limits_{i=1}^k(n_i+1)}\left( \left(\prod\limits_{i=k+1}^{E-1}\pa^{n_i}R_\epsilon^L(z_{e(i)}, \bar z_{e(i)})\right)\Phi \right)\right|_{z_1=z_V}
\end{split}
\end{align*}
where the constant $C(n_0,n_1,\cdots,n_k)$ is a rational number given by
$$
     C(n_0;n_1,\cdots,n_k)=\int_0^1\cdots \int_0^1 \prod\limits_{i=1}^{k}{du_i} {\prod\limits_{i=1}^k u_i^{n_i}\over \left(1+\sum\limits_{i=1}^k u_i\right)^{\sum\limits_{j=0}^k (n_j+1)}}
$$
\end{prop}
\begin{proof} \cite[Prop B.2]{Li-modular} proves the same statement when $k_\epsilon$ is replaced by $\pa_z k_\epsilon$ (which was called $U_\epsilon$ there), and $R_\epsilon^L$ is replaced by $\pa_z R_\epsilon^L$ (which was called $P_\epsilon^L$ there). Such cases correspond to those graph integrals when each edge has decoration $n_e\geq 1$. However, literally the same proof as in \cite[Prop B.2]{Li-modular} shows that the argument is valid for any decorations. This proves the proposition. 

\end{proof}


\end{document}